\theoremstyle{definition}
\newtheorem{definition}{Definition}
\def\newline{\hfill\break}
\def\scong{{\scriptstyle\|}\lower.2ex\hbox{$\wr$}}
\def\Z{{\Bbb Z}}
\def\Q{{\Bbb Q}}
\def\End{\mathop{\rm End}\nolimits}
\def\Hom{\mathop{\rm Hom}\nolimits}
\def\Spec{\mathop{\rm Spec}\nolimits}
\def\rtimes{\mathop{\times\!\!{\raise.2ex\hbox{$\scriptscriptstyle|$}}}
	\nolimits} 
\outer\def\Demo #1. #2\par{\medbreak\noindent {\it#1.\enspace}
	{\rm#2}\par\ifdim\lastskip<\medskipamount\removelastskip
	\penalty55\medskip\fi}
\def\P{{\Bbb P}}
\def\hangbox to #1 #2{\vskip1pt\hangindent #1\noindent \hbox to #1{#2}$\!\!$}
\title{Stable Rationality and Cyclicity}
\author{David J. Saltman\\  
Center for Communications Research\\
805 Bunn Drive\\
Princeton, NJ 08540}
\newtheorem{theorem}{Theorem}[section]
\newtheorem{corollary}[theorem]{Corollary}
\newtheorem{lemma}[theorem]{Lemma}
\newtheorem{proposition}[theorem]{Proposition}
\begin{document}
\maketitle

\section{Introduction} 
The subject of division algebras has a number of issues 
where we understand small degrees but cannot settle 
the general case. The most prominent such issue 
concerns division algebras of prime degree and whether 
they were cyclic algebras or equivalently crossed products. 
Amitsur (\cite{A}) showed there were non-crossed product division algebras, 
but his method very strongly relied on the degree 
being composite. Division algebras of degree 2 and 3 are 
known to be cyclic (that is a crossed product with cyclic 
group)(\cite{Al} p. 177). For $p > 3$ we just do not know. 

In his non-crossed product construction 
Amitsur made use of the so called generic division algebra 
written $UD(F,n)$. The center of this division algebra, 
written $Z(F,n)$, is important both for its control 
of the behavior of $UD(F,n)$ and because 
$Z(F,n)$ arises as the invariant field of the 
linear group $PGL_n(F)$ acting on two copies of 
matrices $M_n(F) \oplus M_n(F)$. Of course $Z(F,n)$ 
is interesting for all $n$ but we focus on $Z(F,p)$ 
for a prime $p$. Once again we have results 
for small $p$. $Z(F,2)/F$ and $Z(F,3)/F$ are known 
to be rational (i.e. purely transcendental) field 
extensions (\cite{P} and \cite{F}). 
$Z(F,5)/F$ and $Z(F,7)/F$ are stably rational
(\cite{BL}). 
Here, however, we do have a general result. 
For any prime $p$ we know that $Z(F,p)/F$ is retract 
rational (\cite{S} p. 116). Thus the issue here is that it is  difficult to 
detect the difference between stably rational field extensions
and the weaker retract rational field extensions. 

The purpose of this paper is to describe a relationship 
between these two issues. Specifically we will show 
(\ref{result}) that if $UD(F,p)$ is cyclic then $Z(F,p)/F$ 
is stably rational. To be more provocative we 
showcase the contrapositive, namely, that 
if $Z(F,p)$ is not stably rational then 
$UD(F,p)$ is not cyclic. Note that if $UD(F,p)$ is cyclic 
then for all $D/K$ degree $p$ with $K \supset F$ 
$D/K$ is cyclic. 

In this paper $F$ will always have characteristic 0 and will always contain 
$\rho$, a primitive $p$ root of one for an odd prime $p$ 
fixed throughout the paper. (There is nothing to prove in the $p = 2$ case). The purpose of the characteristic 0  assumption is to make geometry easier but is likely not necessary. In particular, let $X$ and $Y$ be irreducible varieties over a field $F$ and 
$\phi: Y \to X$ a dominant morphism. 
If the generic fiber of $\phi$ is finite, 
$\phi$ is generically finite and generically etale. If $F$ is assumed to be algebraically closed, 
then the degree of $\phi$ is the cardinality of this generic fiber. 
Moreover,  
$\phi(Y)$ contains a dense open subset of $X$. 

A division algebra $D/F$ is an 
algebra where all nonzero elements  
have a multiplicative inverse 
and which is finite dimensional over its center $F$. 
This dimension is always of the form $n^2$ and $n$ is the 
{\bf degree} of the division algebra. If $\tilde F$ is the 
algebraic closure of $F$ then $D_{\tilde F} = 
D \otimes_F \tilde F \cong M_n(\tilde F)$, where $M_n(\tilde F)$ is 
the matrix algebra over $\tilde F$. The determinant 
map on $M_n(\tilde F)$ descends to the {\bf reduced norm} 
$n: D \to F$. 

When $F$ is as assumed here, cyclic division 
algebras have a special form, often called "symbol algebras". 
If $x,y \in F^*$ we let $(x,y)_{p,F}$ be the algebra 
generated by $\gamma$, $\delta$ subject to the 
relations $\gamma^p = x$, $\delta^p = y$, and 
$\gamma\delta = \rho\delta\gamma$. We say $\gamma$ and 
$\delta$ {\bf skew commute}. This symbol algebra 
is always central simple and has degree $p$. 
If $D/F$ is cyclic of degree $p$, then $D \cong (x,y)_{p,F}$ 
for some $x,y \in F^*$. 

The important thing about symbol algebras is that we have 
two ways to create new skew commuting pairs. If 
$f(x) \in F[x]$ is a polynomial of degree less than that 
$p$ (we write $f(x) \in F[x]_p$) then $\gamma,f(\gamma)\delta$ 
and $f(\delta)\gamma,\delta$ both skew commute. 
The $p$ powers change. $(f(\gamma)\delta)^p = 
n_{F(\gamma)/F}(f(\gamma))\delta^p$ and $(f(\delta)\gamma)^p = 
n_{F(\delta)/F}(f(\delta))\gamma^p$ where 
$n_{K/F}: K \to F$ is the norm map 
of the field extension. Of course if 
$f(\gamma)$ or $f(\delta)$ happen to have norm one 
the $p$ powers do not change. 

In this paper we will often write a nonsingular 
$n \times n$ 
matrix $A$ as $(\vec v_0,\ldots,\vec v_{n-1})$ 
where the $\vec v_i$ are the columns of $A$ and we number the 
columns $0,\ldots,n-1$. When we do present $A$ 
as a matrix $(v_{ij})$ we also number the rows of $A$ 
starting at $0$. 

For $X$ a variety let $k(X)$ be the function field. 
If $X$ is defined over $F$ and $K/F$ is an extension field 
set $X_K = X \times_{\Spec F} \Spec K$ 
which we will usually write as $X \times_F K$. 

A key tool in this paper is the Severi-Brauer variety 
of a central simple algebra. This construction 
is described in many places and a convenient one for this 
paper is Chapter 13 of \cite{S}. One result from 
\cite {S} p. 96 will be of particular importance for us. 

\begin{theorem}\label{isomo}
Let $E/F$ and $E'/F$ be central simple algebras over 
$F$ of equal degrees. Let $X = SB(E' \otimes_F E^{\circ})$ 
be the Severi Brauer variety of $E' \otimes_F E^{\circ}$. 
Then $X$ is birationally isomorphic to to the variety 
of algebra isomorphisms $E \cong E'$. 
\end{theorem}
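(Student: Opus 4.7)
The plan is to exhibit a pair of mutually inverse rational maps between $I$, the variety of algebra isomorphisms $E \cong E'$, and $X = SB(B)$ for $B = E' \otimes_F E^{\circ}$. The input from the theory of Severi-Brauer varieties I will use is that $X$ is birationally identified with the variety of maximal left ideals of $B$ of codimension $n^{2}$: via the correspondence $L \leftrightarrow \{a \in B : La = 0\}$, these biject with minimal right ideals of dimension $n^{2}$, which is the usual definition of $SB(B)$.

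For the forward map, given a $K$-algebra isomorphism $\phi: E_{K} \to E'_{K}$, the formula $(a \otimes b) \cdot y = a\, y\, \phi(b)$ makes $E'_{K}$ into a simple left $B_{K}$-module, and $a \otimes b \mapsto a \phi(b)$ is a surjective left $B_{K}$-module map $B_{K} \to E'_{K}$. Its kernel $J_{\phi}$ is therefore a maximal left ideal of codimension $n^{2}$, giving a $K$-point of $X$.

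For the reverse map, let $J$ be a $K$-point of $X$ satisfying the open condition $J \cap (E'_{K} \otimes 1) = 0$. By dimension count, the composite $E'_{K} \hookrightarrow B_{K} \twoheadrightarrow B_{K}/J$ is an isomorphism, so for each $b \in E_{K}$ there is a unique $\psi_{J}(b) \in E'_{K}$ with $\psi_{J}(b) \otimes 1 \equiv 1 \otimes b \pmod{J}$. The multiplicativity $\psi_{J}(b_{1} b_{2}) = \psi_{J}(b_{1}) \psi_{J}(b_{2})$ then follows by applying associativity of the left $B_{K}$-action on $B_{K}/J$ to the identity $(1 \otimes b_{2})(1 \otimes b_{1}) = 1 \otimes (b_{1} b_{2})$ in $B_{K}$ (the order reverses because of the opposite multiplication in $E^{\circ}$), together with the fact that $E'_{K} \otimes 1$ commutes with $1 \otimes E_{K}^{\circ}$ in $B_{K}$. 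Since $E_{K}$ is simple, $\psi_{J}$ is automatically an isomorphism. Unwinding the definitions then shows $\psi_{J_{\phi}} = \phi$ and $J_{\psi_{J}} = J$ on the overlapping open loci, giving the birational equivalence.

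The main obstacle is the bookkeeping needed to confirm that $\psi_{J}$ is a homomorphism rather than an anti-homomorphism: one has to match the order reversal inherent in $E^{\circ}$-multiplication against the left-action associativity $(xy) \cdot m = x \cdot (y \cdot m)$. A subsidiary point is verifying that the identification of $X$ with the variety of maximal left ideals is functorial enough that the constructions $\phi \mapsto J_{\phi}$ and $J \mapsto \psi_{J}$ define morphisms of varieties rather than merely bijections on $K$-valued points.
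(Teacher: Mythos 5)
The paper does not prove this theorem; it is imported verbatim from \cite{S}, p.\ 96, so there is no internal proof to compare against. On its own terms your argument is correct and is the standard one. I checked the computations you left implicit: the rule $(a\otimes b)\cdot y = ay\phi(b)$ is a genuine left $B_K$-action on $E'_K$ precisely because multiplication in $E^{\circ}$ is reversed (the double reversal makes the two sides of the associativity identity agree); the kernel $J_{\phi}$ of the surjection $a\otimes b\mapsto a\phi(b)$ is a maximal left ideal of codimension $n^2$ satisfying $J_{\phi}\cap(E'_K\otimes 1)=0$; $E'_K$ is a simple $B_K$-module because a $B_K$-submodule is a two-sided ideal of $E'_K$; and $\psi_J$ is multiplicative exactly as you say --- left-multiply the congruence $1\otimes b_1\equiv\psi_J(b_1)\otimes 1\ (\mathrm{mod}\ J)$ by $1\otimes b_2$, use $(1\otimes b_2)(1\otimes b_1)=1\otimes(b_1b_2)$ together with the commuting of $E'_K\otimes 1$ with $1\otimes E_K^{\circ}$, and finish with injectivity of $E'_K\otimes 1\hookrightarrow B_K/J$. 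The open locus $J\cap(E'_K\otimes 1)=0$ is non-empty since some $\phi$ exists over $\tilde F$, and the identities $\psi_{J_{\phi}}=\phi$ and $J_{\psi_J}=J$ follow directly.

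The one point that deserves more than a flag is the one you raise yourself: as written you produce a bijection on $K$-points for each field $K\supset F$, not a pair of rational maps of $F$-varieties. To upgrade this to a birational equivalence one should either run the same construction functorially, with ideals $J\subset B\otimes_F T$ for an arbitrary $F$-algebra $T$ such that $B_T/J$ is projective of the correct rank (this is the functor represented by $SB(B)$), or simply choose bases of $E$ and $E'$ and observe that $\phi\mapsto J_{\phi}$ and $J\mapsto\psi_J$ are given by polynomial formulas in the structure constants. Neither is hard, but your argument stops one step short of that, and a complete proof needs it.
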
 

Using the above we showed in \cite{S} p. 113

\begin{theorem}\label{rational}
Let $E/F$ be a central simple algebra of degree 
$n$. Form $E' = E \otimes_F Z(F,n)$. 
Let $X = SB(UD(F,n) \otimes_{Z(F,n)} (E')^{\circ})$. 
Then $X$ is rational over $F$. 
\end{theorem}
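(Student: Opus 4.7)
The plan is to show $X$ is birational over $F$ to the affine variety $W := E \oplus E$, which is $F$-rational since $W$ is an $F$-vector space of dimension $2n^2$. By Theorem~\ref{isomo}, $X$ is birationally isomorphic (over $Z(F,n)$, hence over $F$) to the scheme $Y$ of $Z(F,n)$-algebra isomorphisms $E' \cong UD(F,n)$, so it suffices to establish that $Y$ and $W$ are birational as $F$-varieties. A reassuring dimension count: $\dim_F X = (n^2-1) + \dim_F Z(F,n) = (n^2-1) + (n^2+1) = 2n^2 = \dim_F W$.

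To establish the birational isomorphism, I will write down mutually inverse $F$-rational maps $\alpha: Y \dashrightarrow W$ and $\beta: W \dashrightarrow Y$. The two projections $W \to E$ yield generic elements $a_1, a_2 \in E \otimes_F F(W)$. Let $K \subset F(W)$ be the subfield generated by the reduced traces of all words in $a_1, a_2$, and let $D \subset E \otimes_F F(W)$ be the $K$-subalgebra they generate. The key identification is that $K$ is $F$-isomorphic to $Z(F,n)$ and $D$ is $F$-isomorphic to $UD(F,n)$ via $a_i \leftrightarrow \xi_i$, where $\xi_1, \xi_2 \in UD(F,n)$ are the generic matrices. Granted this, $[D:K] = n^2 = [E \otimes_F F(W) : F(W)]$ forces $D \otimes_K F(W) = E \otimes_F F(W) = E' \otimes_K F(W)$, which is an isomorphism $UD(F,n) \otimes_{Z(F,n)} F(W) \cong E' \otimes_{Z(F,n)} F(W)$, i.e., an $F(W)$-point of $Y$; this is $\beta$.

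The inverse $\alpha$ is constructed from the universal isomorphism $\Phi: E' \otimes_{Z(F,n)} \mathcal{O}_Y \to UD(F,n) \otimes_{Z(F,n)} \mathcal{O}_Y$ carried by $Y$. Applying $\Phi^{-1}$ to $\xi_1, \xi_2$ gives two sections $\Phi^{-1}(\xi_i) \in E' \otimes_{Z(F,n)} \mathcal{O}_Y = E \otimes_F \mathcal{O}_Y$, which together define a rational $F$-map $\alpha: Y \dashrightarrow E \oplus E = W$. That $\alpha \circ \beta$ is the identity is built into the construction: $\beta$ manufactures the iso sending $\xi_i \to a_i$, which $\alpha$ then inverts. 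That $\beta \circ \alpha$ is the identity follows because a $Z(F,n)$-algebra iso $E' \to UD(F,n)$ is determined by the preimages of the generators $\xi_1, \xi_2$.

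The main obstacle I anticipate is the identification of $K$ with $Z(F,n)$ and of $D$ with $UD(F,n)$ as $F$-algebras (compatibly, sending $a_i \leftrightarrow \xi_i$) for arbitrary $E$. When $E = M_n(F)$ this is tautological from the definitions; for non-split $E$, the pair $(E \oplus E, PGL_1(E))$ is a twist of $(M_n \oplus M_n, PGL_n)$ becoming equivalent over the algebraic closure, and the requisite identifications should follow by Galois descent since reduced traces and the subalgebra generated by $a_1, a_2$ are Galois-equivariant constructions within $E \otimes_F F(W)$.
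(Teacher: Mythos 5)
Your proof is correct and follows the route the paper indicates (``Using the above we showed in [S]''): reduce via Theorem~\ref{isomo} to the $Z(F,n)$-variety of isomorphisms $E' \to UD(F,n)$ and show it is $F$-birational to the affine space $E \oplus E$ by realizing $Z(F,n) \subset UD(F,n)$ as the generic trace field and generic subalgebra inside $E \otimes_F F(E \oplus E)$ for an \emph{arbitrary} degree-$n$ central simple $E$. The Galois-descent step you flag as the crux does work out, the key point being that the twisted Galois action on $M_n(\tilde F(x_{ij},y_{ij}))$ --- conjugation by the twisting cocycle on the coordinate functions composed with conjugation by the same cocycle on matrix entries --- fixes the generic elements $\xi_1,\xi_2$ and acts without twist on the trace subring, so the descents of $UD(\tilde F,n)$ and $Z(\tilde F,n)$ are the untwisted $UD(F,n)$ and $Z(F,n)$ with $a_i\leftrightarrow\xi_i$ as you claim.
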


\section{Some varieties}

Let $D$ be a division algebra of odd prime degree $p$ 
over a field $F$ of characteristic 0 containing $\rho$, 
a primitive $p$ root of unity. Often we will assume 
$D/F$ is cyclic, so $D \cong (x,y)_{p,F}$ 
where $(x,y)_{p,F}$ is generated by $\gamma$, $\delta$ 
subject to $\gamma^p = x$, $\delta^p = y$, and 
$\gamma\delta = \rho\delta\gamma$. 

We define a series of varieties, starting with 
$P(D) = P \subset D^* \times D^*$ defined by the 
equations underlying $\alpha\beta - \rho\beta\alpha = 0$. 
That is, $P$ is the variety of skew commuting pairs.  
More precisely, let $u_1,\ldots,u_{p^2}$ be a basis 
for $D$ over $F$ where it is convenient to assume 
$u_1 = 1$. Write $\alpha = \sum_i x_iu_i$ and 
$\beta = \sum_i y_iu_i$ and $\alpha\beta - 
\rho\beta\alpha = \sum_k f_k(\vec x,\vec y)u_k$ 
where the $f_k(\vec x,\vec y)$ are homogeneous 
polynomials in the $x_i$ and $y_j$. 
We let $I \subset F[\vec x,\vec y]$ be the radical 
of the ideal generated by all the $f_k$. Then 
$P$ is $\Spec(F[\vec x,\vec y]/I)$. 

Note that the relation $\alpha\beta = \rho\beta\alpha$ 
is separately homogenous in $\alpha$ and $\beta$. 
Thus the $f_k(\vec  x,\vec y)$ are separately homogeneous 
in the $x_i$ and $y_i$. Hence it makes 
sense to define $\bar P = \bar P(D) \subset \P(D^*) \times \P(D^*)$ defined 
as the zeroes of $I$. There is a natural 
morphism $P \to \bar P$. One can think of 
$P$ as a double cone over $\bar P$. 

\begin{lemma}\label{bihomogeneous}
If $C \subset P$ is Zariski closed and closed 
under the action of $F^* \times F^*$ then 
the image of $C$ in $\bar P$ is Zariski closed. 
\end{lemma}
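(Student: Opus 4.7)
The plan is to realize the projection $\pi$ from the locus in $P$ where both factors are nonzero onto $\bar P$ as a locally trivial principal $F^* \times F^*$-bundle in the Zariski topology, and then apply standard descent of closed stable subsets through the quotient.

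Cover $\P(D^*) \times \P(D^*)$ by the standard affine opens $U_{ij} = \{x_i \neq 0\} \times \{y_j \neq 0\}$ and set $V_{ij} = U_{ij} \cap \bar P$. On the preimage of $V_{ij}$ in $P$, each $F^* \times F^*$-orbit contains a unique pair $(\alpha, \beta)$ with $x_i(\alpha) = y_j(\beta) = 1$; since the skew-commutation relation $\alpha\beta - \rho\beta\alpha = 0$ is preserved under rescaling each factor independently, the map $(\alpha, \beta) \mapsto (([\alpha], [\beta]), x_i(\alpha), y_j(\beta))$ is an $F^* \times F^*$-equivariant isomorphism $\pi^{-1}(V_{ij}) \cong V_{ij} \times F^* \times F^*$, with the group acting by translation on the second factor.

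Given this trivialization, $C \cap \pi^{-1}(V_{ij})$ is a closed $F^* \times F^*$-stable subset of $V_{ij} \times F^* \times F^*$. The key lemma is that any such subset has the product form $\tilde C_{ij} \times F^* \times F^*$ for some closed $\tilde C_{ij} \subset V_{ij}$: writing its defining ideal as $(\Z \times \Z)$-graded under the weight decomposition coming from the action, a Vandermonde argument on distinct monomials $c^m d^n$ (with $c, d \in F^*$) forces the ideal to be generated by its weight-zero part. Hence $\pi(C) \cap V_{ij} = \tilde C_{ij}$ is closed in $V_{ij}$, and gluing over the cover $\{V_{ij}\}$ of $\bar P$ shows that $\pi(C)$ is Zariski closed.

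The one place requiring genuine work is the descent assertion that a closed $F^* \times F^*$-stable subset of a trivial $F^* \times F^*$-bundle has product form; this is the standard consequence of weight decomposition under a split torus, and while the verification is short, I expect writing it out, together with being careful that boundary orbits through pairs $(\alpha, 0)$ or $(0, \beta)$ in $C$ do not contribute to the image, will constitute the main bookkeeping effort.
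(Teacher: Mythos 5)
Your proof is correct and rests on the same underlying algebraic fact as the paper's, but packages it differently. The paper argues globally and in one step: since $C$ is $F^* \times F^*$-stable, its defining ideal $J \subset F[\vec x, \vec y]$ is separately homogeneous in the $x_i$ and the $y_j$, and such a bihomogeneous ideal cuts out a closed subset of $\P(D) \times \P(D)$ that is precisely the image of the cone $C$. You instead trivialize the projection $P \to \bar P$ as a principal $F^* \times F^*$-bundle over each affine chart $V_{ij}$, invoke the weight decomposition of the ideal over the Laurent ring $\mathcal{O}(V_{ij})[c^{\pm 1}, d^{\pm 1}]$ to show the stable closed subset has product form, and then glue. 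The ``generated in weight $(0,0)$'' statement in your key lemma is exactly the bihomogeneity the paper uses, so the two routes are mathematically equivalent; yours is more explicit about why the bihomogeneous locus actually equals the image on each chart, while the paper leans on the standard cone-to-biprojective dictionary. The boundary-orbit concern you flag at the end is a sensible thing to check in general, but here $P$ sits inside $D^* \times D^*$, so both coordinates are already nonzero and no such orbits exist.
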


\begin{proof} Let $R = F[\vec x,\vec y]/I$ be the affine ring of $P$ and 
$J \subset F[\vec x,\vec y]$ 
the ideal defining $C$. Then $J$ is closed under the action 
of $F^* \times F^*$ and must be homogeneous separately 
in the $x_i$ and $y_i$. Thus $J$ defines the closed image 
of $C$ in $\bar P$. 
\end{proof} 

We now define a third variety. Let $\hat P \subset D^* \times D^*$ be the affine 
variety defined as the set of $\{(\alpha,\beta) | \alpha\beta = \rho\beta\alpha, 
\alpha^p = 1, \beta^p = 1\}$. That is, just as above, these relations define 
$3p^2$ relations $g_i(\vec x,\vec y) = 0$, $J$ is the radical of the ideal generated by 
the $g_i$ and $\hat P$ is the affine variety in $D^* \times D^*$ defined by $J$. Thus $\hat P \subset P$ is closed. Let $\tilde F$ be the algebraic 
closure of $F$. 

\begin{lemma}
$\hat P$ is absolutely irreducible of dimension $p^2 - 1$. 
\end{lemma}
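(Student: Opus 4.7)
The plan is to extend scalars to the algebraic closure $\tilde F$, use the splitting $D \otimes_F \tilde F \cong M_p(\tilde F)$, and realize $\hat P_{\tilde F}$ as a single orbit of the simultaneous conjugation action of $\PGL_p$ on $M_p(\tilde F) \times M_p(\tilde F)$. Once this is done, the irreducibility of $\PGL_p$ as a variety will give absolute irreducibility of $\hat P$, and the orbit--stabilizer count will give the dimension.

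Fix the standard pair $\gamma_0 = \mathop{\rm diag}(1,\rho,\ldots,\rho^{p-1})$ and $\delta_0$ the cyclic permutation matrix with $\delta_0 e_j = e_{j+1}$ (indices mod $p$). Then $\gamma_0^p = \delta_0^p = 1$ and $\gamma_0 \delta_0 = \rho \delta_0 \gamma_0$, so $(\gamma_0,\delta_0) \in \hat P(\tilde F)$. The key step is to show that every $(\alpha,\beta) \in \hat P(\tilde F)$ is $\PGL_p$-conjugate to $(\gamma_0,\delta_0)$. Since $\alpha^p = 1$ in characteristic $0$, $\alpha$ is diagonalizable with eigenvalues in $\mu_p$; the relation $\beta^{-1}\alpha\beta = \rho\alpha$ says the multiset of eigenvalues is preserved by multiplication by $\rho$, and since this acts on $\mu_p$ as a single $p$-cycle, each $p$-th root of unity appears with equal multiplicity, hence exactly once. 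So $\alpha$ is conjugate to $\gamma_0$, and after conjugating we may assume $\alpha = \gamma_0$. Solving $\gamma_0\beta = \rho\beta\gamma_0$ entrywise forces $\beta_{ij} = 0$ unless $i \equiv j+1 \pmod p$, so $\beta = \delta_0 D$ for a diagonal $D = \mathop{\rm diag}(d_0,\ldots,d_{p-1})$. A direct computation gives $\beta^p = \bigl(\prod_j d_j\bigr) I$, so $\beta^p = 1$ iff $\prod_j d_j = 1$. The centralizer of $\gamma_0$ in $\GL_p$ is the diagonal torus, and conjugating $\beta$ by $\mathop{\rm diag}(e_0,\ldots,e_{p-1})$ sends $d_j \mapsto e_{j+1} d_j/e_j$; the telescoping recursion $e_0 = 1$, $e_{j+1} = d_j e_j$ is consistent because $e_p = \prod_j d_j = 1 = e_0$, and it normalizes $\beta$ to $\delta_0$. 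This establishes transitivity.

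To finish, note that any element of $\GL_p(\tilde F)$ centralizing both $\gamma_0$ and $\delta_0$ must centralize the subalgebra they generate, and this subalgebra is all of $M_p(\tilde F)$ because the $p^2$ products $\gamma_0^a \delta_0^b$ are supported on disjoint shifted diagonals and are easily checked to be linearly independent. Hence the stabilizer of $(\gamma_0,\delta_0)$ in $\PGL_p$ is trivial, the orbit has dimension $\dim \PGL_p = p^2 - 1$, and it fills out all of $\hat P(\tilde F)$. Since $\PGL_p$ is an irreducible variety surjecting onto $\hat P_{\tilde F}$, and $\hat P$ is reduced by construction, $\hat P_{\tilde F}$ is irreducible of dimension $p^2 - 1$, as claimed.

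The main obstacle is the reduction to the standard pair: the eigenvalue analysis that pins down $\alpha$ up to conjugation, and the diagonal-torus normalization of $\beta$. Once the $\PGL_p$-action is shown to be transitive on the closed points over $\tilde F$, the orbit--stabilizer bookkeeping is routine.
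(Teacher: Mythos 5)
Your approach is the same as the paper's: extend scalars to $\tilde F$, identify $D\otimes_F\tilde F$ with $M_p(\tilde F)$, observe that $\PGL_p(\tilde F)$ acts transitively on $\hat P(\tilde F)$ with trivial stabilizer, and conclude $\hat P_{\tilde F}\cong\PGL_p(\tilde F)$ is irreducible of dimension $p^2-1$; the paper merely asserts the conjugacy and the identification $\hat P(M_p(\tilde F))\cong M_p(\tilde F)^*/\tilde F^*$, while you supply the eigenvalue analysis, the normalization of $\beta$, and the stabilizer computation. One small slip: with your formula $d_j\mapsto e_{j+1}d_j/e_j$, the recursion should be $e_{j+1}=e_j/d_j$ (not $e_{j+1}=d_je_j$, which would send $d_j\mapsto d_j^2$); the telescoping consistency $e_p=e_0/\prod_j d_j=e_0$ still holds since $\prod_j d_j=1$.
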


\begin{proof}
$\hat P(D) \times_F \tilde F = \hat P(D \otimes_F \hat F)$ 
and $D \otimes_F \tilde F \cong M_p(\tilde F)$. 
If $(\alpha,\beta)$ and $(\alpha',\beta')$ are $\tilde F$ points of
$\hat P(M_p(\tilde F))$ then they are conjugate. That is, 
$\hat P(M_p(\tilde F)) \cong 
M_p(\tilde F)^*/\tilde F^*$ 
which shows $\hat P \times_F \tilde F$ is irreducible.
\end{proof}

We are interested in the composition 
$\hat Q: \hat P \to P \to \bar P$. 

\begin{proposition}\label{Galois}
$\hat Q$ is a Galois cover with group $C_p \times C_p$. 
\end{proposition}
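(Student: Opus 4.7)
The plan is to exhibit an explicit action of $G = C_p \times C_p$ on $\hat P$ under which $\hat Q$ is invariant, and then verify that this action is free and transitive on generic fibers. I define $(i,j) \in G$ to act by $(\alpha,\beta) \mapsto (\rho^i\alpha, \rho^j\beta)$. This preserves both the skew-commutation $\alpha\beta = \rho\beta\alpha$ and the equations $\alpha^p = \beta^p = 1$ (since $\rho^p = 1$), so $G$ indeed acts on $\hat P$. Because $(\rho^i\alpha,\rho^j\beta)$ differs from $(\alpha,\beta)$ only by scalars in $F^*\times F^*$, the two pairs have the same image in $\bar P$, so $\hat Q$ is $G$-invariant. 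The action is free: if $\rho^i\alpha = \alpha$ then $\rho^i = 1$ (as $\alpha \ne 0$, automatic since $\alpha^p = 1$), forcing $i = 0$, and similarly $j = 0$.

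Next I would show $\hat Q$ is dominant and compute its generic degree to be $p^2$. A dimension count is reassuring: $\dim \hat P = p^2 - 1$ by the preceding lemma, while $P$ is a $\GL_p$-orbit bundle over the plane of values $(\alpha^p,\beta^p)$ with orbits of dimension $p^2 - 1$, so $\dim P = p^2+1$ and $\dim \bar P = p^2 - 1$. To see $\hat Q$ is dominant and to count a generic fiber, I lift a generic $[\alpha:\beta] \in \bar P$ to some $(\alpha,\beta) \in P$ with $\alpha^p = a$, $\beta^p = b$ both nonzero in $\tilde F$ (a skew-commuting pair of nonzero elements of $M_p(\tilde F)$ has central, generically nonzero, $p$th powers, by the same reasoning used in the preceding lemma, since conjugation by $\beta$ cyclically permutes the eigenspaces of $\alpha$). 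Then the preimages in $\hat P$ of $[\alpha:\beta]$ are exactly the pairs $(\lambda\alpha,\mu\beta)$ with $\lambda^p a = 1$ and $\mu^p b = 1$, and each equation has $p$ solutions since $\rho \in F$, giving $p^2$ preimages.

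Combining these ingredients, $\hat Q$ is a dominant, generically finite morphism of degree $p^2$, and the free $G$-action of order $p^2 = \deg \hat Q$ is necessarily transitive on each generic fiber. Since $F$ has characteristic zero, generic finiteness upgrades to generic étaleness (as noted in the introduction), so $\hat Q$ realizes $F(\hat P)/F(\bar P)$ as a Galois extension with group $G$. The most delicate point in this plan is confirming that $(\alpha^p,\beta^p)$ really is generically a pair of nonzero elements on $P$; once this structural fact about skew-commuting pairs in $M_p(\tilde F)$ is in hand, the rest of the argument is essentially bookkeeping about scalar rescaling by $p$th roots of unity.
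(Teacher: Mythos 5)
Your proof is correct, but it takes a genuinely different route from the paper's. You exhibit the $C_p\times C_p$-action $(\alpha,\beta)\mapsto(\rho^i\alpha,\rho^j\beta)$ directly on $\hat P$, check that it is free and compatible with $\hat Q$, and then argue that a free action of order equal to the generic degree $p^2$ --- computed by counting the preimages $(\lambda\alpha,\mu\beta)$ with $\lambda^p a=\mu^p b=1$ --- must act transitively on generic fibers, so in characteristic $0$ the extension $k(\hat P)/k(\bar P)$ is Galois with group $C_p\times C_p$. The paper instead builds the cover from below: on each affine open $U_{ij}\subset\bar P$ the generic pair has $p$-th powers $a_i,b_j$ that are global units, $X=P(a_i^{1/p},b_j^{1/p})$ is by construction a $C_p\oplus C_p$ Kummer cover, and the local normalizations $\hat\alpha_i=\alpha_i/a_i^{1/p}$, $\hat\beta_j=\beta_j/b_j^{1/p}$ patch into a morphism $X\to\hat P$ that base change to $\tilde F$ reveals to be an isomorphism. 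Both arguments rest on the same dominance-and-degree computation, but the paper's route has the side benefit of identifying $k(\hat P)$ explicitly as a Kummer extension of $k(P)$, which is used in the later analysis of $\Psi$ and the tori; yours reaches the Galois statement more economically and makes the group action fully transparent. One small slip in your wording: you say a skew-commuting pair of \emph{nonzero} matrices has scalar $p$-th powers, but the eigenspace-permutation argument (conjugation by $\beta$ scales $\alpha$ by $\rho^{-1}$, forcing the eigenvalues of $\alpha$ to form a full $\rho$-orbit with one-dimensional eigenspaces) requires both $\alpha$ and $\beta$ to be \emph{invertible}; since $P\subset D^*\times D^*$ this holds at the generic point, so nothing breaks, but ``nonzero'' should read ``invertible''.
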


Note that $\hat P$ has $F$ no rational 
points unless $D$ is split.

\begin{proof}
$\hat Q$ is surjective on $\tilde F$ points. 
Given $(\alpha,\beta)$ in $\bar P$, let $a = \alpha^p \in \tilde F^*$ 
and $b = \beta^p \in \tilde F^*$. Then $(\alpha/a^{1/p},\beta/b^{1/p})$ 
equals $(\alpha,\beta)$ in $\bar P$ and is obviously in the image of 
$\hat Q$. It follows that $\hat Q$ is dominant. This shows: 

\begin{lemma}
$\bar P$ and hence $P$ are absolutely irreducible.
\end{lemma}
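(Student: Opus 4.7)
The plan is to read the lemma off from the two facts just assembled: $\hat P$ is absolutely irreducible (the preceding lemma), and $\hat Q : \hat P \to \bar P$ is dominant (the surjectivity on $\tilde F$-points verified in the paragraph immediately above). Passing to the algebraic closure $\tilde F$, the base-changed map $\hat Q \times_F \tilde F$ is a dominant morphism from the irreducible variety $\hat P \times_F \tilde F$ to $\bar P \times_F \tilde F$. Its image is a constructible set contained in a single irreducible component of the target, and density forces that component to exhaust $\bar P \times_F \tilde F$. Hence $\bar P$ is absolutely irreducible.

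To transfer the conclusion to $P$, I would exploit that $P$ is the affine bicone over $\bar P$: the defining ideal $I$ is separately homogeneous in $\vec x$ and $\vec y$, so the open subvariety $P^\circ \subset P$ cut out by $\alpha \neq 0$ and $\beta \neq 0$ is an $F^* \times F^*$-bundle over $\bar P$. Since the fiber $F^* \times F^*$ is geometrically connected and the base is absolutely irreducible, $P^\circ$ is absolutely irreducible. The excluded locus $\{\alpha = 0\} \cup \{\beta = 0\}$ lies in a proper closed subvariety of $P$ (immediate from the bihomogeneity: setting $\alpha = 0$ imposes $p^2$ independent linear conditions), so $P$ is the Zariski closure of $P^\circ$ and inherits absolute irreducibility.

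The argument has no real obstacle: once the preceding dominance and the geometry of the bicone are in place, absolute irreducibility is formal. The only point requiring any care is the final dimension comparison ruling out extra components supported on the vertex locus, which is routine.
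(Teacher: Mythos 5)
Your argument for $\bar P$ reproduces the paper's reasoning exactly: the lemma sits inside the proof of the Galois-cover proposition, immediately after the paragraph that shows $\hat Q$ is surjective on $\tilde F$-points (hence dominant), and the paper treats the lemma as an instant consequence of this together with the already-established absolute irreducibility of $\hat P$. Your passage to $P$ via the bicone structure is likewise in the spirit of the paper's unargued ``hence $P$.''

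One caution on the $P$ step: the inference ``the excluded locus $\{\alpha=0\}\cup\{\beta=0\}$ is a proper closed subvariety, \emph{therefore} $P$ is the closure of $P^\circ$'' is not valid as stated, and the dimension comparison you call routine does not rescue it, because a component can have strictly smaller dimension than the main component. Indeed, if one reads $P$ literally as $\Spec(F[\vec x,\vec y]/I)$ then $V(I)$ genuinely contains $\{0\}\times D$ and $D\times\{0\}$ as $p^2$-dimensional components --- every $f_k$ vanishes identically when $\alpha=0$, and a generic $\beta$ admits no skew-commuting partner, so $\{0\}\times D$ is not in the closure of $P^\circ$. What makes the lemma true is the paper's stipulation $P\subset D^*\times D^*$ (equivalently, localizing at the reduced norms of $\alpha$ and $\beta$, as the Appendix does); once $\alpha$ and $\beta$ are required to be invertible, $P=P^\circ$ and the bundle argument over $\bar P$ is all one needs. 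So the conclusion is right, but you should appeal to that restriction rather than to a dimension count.
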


For each $u_i$ and $u_j$ we get an 
affine open in $V_i \times V_j \subset \P(D^*) \times \P(D^*)$ 
defined by $x_i \not= 0$ and $y_j \not= 0$. 
If $(V_i \times V_j) \cap \bar P$ is empty we can ignore 
it. If not $(V_i \times V_j) \cap \bar P = U_{ij}$ is nonempty 
open, affine and irreducible. In fact, $U_{ij}$ 
can be identified with $U_{ij} \subset P$ defined by 
$x_i = 1$ and $y_j = 1$. 

There are generic $\alpha_i,\beta_j$ 
defined on $U_{ij}$ with 
$\alpha_i\beta_j = \rho\beta_j\alpha_i$ and then 
$\alpha_i^p = a_iu_1$ and $\beta_j^p = b_ju_1$ 
for $a_i,b_j$ global units on $U_{ij}$. 
On $U_{ij} \cap U_{i'j'}$ we have $\alpha_i = \alpha_{i'}c_{ii'}$ 
and $\beta_j = \beta_{j'}d_{jj'}$ for $c_{ii'}, d_{jj'} \in F^*$. 
Thus $a_i = a_{i'}c_{ii'}^p$ and $b_j = b_{j'}d_{jj'}^p$. 

We can thus define $X = P(a_i^{1/p},b_j^{1/p})$ which is 
independent of the choice of $i,j$ and is $C_p \oplus C_p$ 
Galois over $P$. On $U_{ij}(a_i^{1/p},b_j^{1/p})$ are defined $\hat \alpha_i = \alpha_i/(a_i^{1/p})$ 
and $\hat \beta_j = \beta_j/b_j^{1/p}$ and this defines 
$U_{ij}(a_i^{1/p},b_j^{1/p}) \to \hat P$. 
Clearly $\hat \alpha_i$ and $\hat \alpha_{i'}$ differ by a root 
of unity (and similarly for the $\beta$'s) which is already 
ambiguous in our definitions. It is easy to see that 
these maps patch together 
to define $X \to \hat P$. Going to $\tilde F$ makes it clear 
this is an isomorphism. This proves \ref{Galois}
\end{proof}

\begin{corollary} 
Despite appearances, $\bar P$ is affine. It has dimension 
$p^2 - 1$. $P$ has dimension $p^2 + 1$. 
\end{corollary}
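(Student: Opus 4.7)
The plan is to extract all three assertions from Proposition~\ref{Galois} together with the fact that $\hat P$ is a closed subvariety of the affine space $D\times D$ of dimension $p^2-1$.

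For the dimensions I would argue as follows. Since $\hat Q\colon \hat P\to \bar P$ is a Galois cover with the finite group $C_p\times C_p$, it is finite and surjective, so $\dim\bar P=\dim\hat P=p^2-1$. For $P$, I would use the natural map $P\to\bar P$ and observe that over each affine open $U_{ij}\subset\bar P$ appearing in the proof of Proposition~\ref{Galois}, the preimage in $P$ is isomorphic to $U_{ij}\times\mathbb{G}_m\times\mathbb{G}_m$; the two $\mathbb{G}_m$ factors record the scalar normalizations $x_i$ and $y_j$ that were set to $1$ in the definition of $U_{ij}$, and the rescaled pair still satisfies $\alpha\beta=\rho\beta\alpha$ because the relation is separately homogeneous. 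This gives $\dim P=\dim\bar P+2=p^2+1$.

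For affineness, the cover $\hat Q$ exhibits $\bar P$ as the quotient $\hat P/(C_p\times C_p)$. Since $\hat P$ is affine and the group is finite, the quotient is affine: its coordinate ring is the ring of invariants $F[\hat P]^{C_p\times C_p}$, which is finitely generated by the classical Hilbert--Noether theorem, and $\Spec$ of it recovers the quotient. (Equivalently, one can invoke Chevalley's theorem that a Noetherian scheme admitting a finite surjective morphism from an affine scheme is itself affine.)

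The main obstacle, or rather the only conceptually nontrivial point, is the affineness itself: a priori $\bar P$ sits inside $\P(D^*)\times\P(D^*)$ and one would expect a projective variety. The resolution is that the Galois cover $\hat Q$ was built by adjoining $p$-th roots of the global units $a_i$ and $b_j$, and the effect of this is precisely to present $\bar P$ as a finite-group quotient of an affine variety. Once that is recognized, the dimension computations follow automatically from Proposition~\ref{Galois} and routine bookkeeping with the charts $U_{ij}$.
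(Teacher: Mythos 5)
Your proposal is correct and matches the argument the paper intends: affineness of $\bar P$ comes from it being the quotient of the affine variety $\hat P$ by the finite group $C_p\times C_p$ (equivalently, from the finite surjection $\hat Q$), the dimension $p^2-1$ is transferred from $\hat P$ along the finite cover, and $\dim P = p^2+1$ follows because $P\to\bar P$ is the double cone with fibers $\mathbb{G}_m\times\mathbb{G}_m$ over the charts $U_{ij}$. One trivial slip: $\hat P$ is a closed subvariety of $D^*\times D^*$ rather than of $D\times D$, but since $D^*\times D^*$ is a principal open (hence affine) subset of the affine space $D\times D$, this does not affect the conclusion.
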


Thus all the varieties 
we are defining are affine. Closed subvarieties 
of affines are affine, so all the varieties we define in this 
paper are affine. This means that when we have finite group 
actions we can define quotients without worry. 
 
In summary, these three varieties fit into 
a diagram: 
$$\begin{matrix}
P\cr
\downarrow\cr
\bar P&\longleftarrow&\hat P
\end{matrix}$$ 
and there is a corresponding diagram over $\tilde F$ 
$$\begin{matrix}
P_{\tilde F}\cr
\downarrow\cr
\bar P_{\tilde F}&\longleftarrow&\hat P_{\tilde F}
\end{matrix}$$
Here the vertical arrows are the double cones over 
$\bar P$ and $\bar P_{\tilde F}$. The horizontal arrows are $C_p \oplus C_p$ 
Galois extensions. 

Now we can begin to outline the argument of this paper. 
Our goal will be to show that $P$ is a rational variety. 
To do this we will, in the third section, define a series of subvarieties 
$P_2 \subset P_{3} \subset \ldots \subset P_{p+1} = P$ 
and show all the $P_i$ are rational. To accomplish this we will 
define corresponding towers $\bar P_2 \subset \ldots \subset \bar P_{p+1} = \bar P$ and 
$\hat P_2 \subset \ldots \subset \hat P_{p+1} = \hat P$. 
To understand the first tower we will extend scalars to $\tilde F$ 
and in that context understand the third tower. In section three we will also explain why it is natural to begin our 
towers at 2. 

We observe next how two of the varieties defined above have alternate birational descriptions. 
The second description, of $\hat P_{\tilde F} = \hat P \times_F \tilde F$, 
will begin to make clear why the strategy outlined above works. 

But first we begin with 
$P$. 

We can view $D \otimes_F k(P)$ as the 
extension making $D$ cyclic in a generic way. There is another approach to the same end. 
Let $F(x,y) = F'$ be the purely transcendental extension. 
Over $F'$ form the "generic" cyclic algebra $E = (x,y)_{p,F'}$ 
generated by $\gamma$, $\delta$ 
such that $\gamma^p = x$, $\delta^p = y$, and $\gamma\delta = 
\rho\delta\gamma$. Next let 
$D_{F'} = D \otimes_F F'$ which we write as $D'$. Set 
$X = SB(E \otimes_{F'} (D')^{\circ})$ to be the Severi Brauer variety and let 
$K = k(X)$ be the corresponding generic splitting field. Of course, $D \otimes_F K 
\cong E \otimes_{F'} K$ is a cyclic algebra. 

\begin{theorem}\label{iso}
$K$ above is isomorphic to 
$k(P)$. 
\end{theorem}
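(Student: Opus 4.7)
The plan is to realize $P$ as a family over $\A^2_F$ whose generic fiber is birationally the algebra-isomorphism variety provided by Theorem~\ref{isomo}, and then chase function fields.

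First I would define a rational map $\pi: P \dashrightarrow \A^2_F = \Spec F[x,y]$ by $(\alpha,\beta) \mapsto (\alpha^p, \beta^p)$. The key observation is that $\alpha^p$ commutes with $\alpha$ trivially and with $\beta$, since $\alpha^p\beta = \rho^p\beta\alpha^p = \beta\alpha^p$. At the generic point of $P$, the pair $\alpha,\beta$ generates $D\otimes_F k(P)$ as an algebra over $k(P)$; this can be verified after base change to $\tilde F$, where a generic skew-commuting pair in $M_p(\tilde F)$ produces $p^2$ linearly independent monomials $\alpha^i\beta^j$. Hence $\alpha^p$ lies in $Z(D\otimes k(P)) = k(P)$, and similarly $\beta^p \in k(P)$, so $\pi$ is a well-defined rational map. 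To see it is dominant, write $D \cong (x_0,y_0)_{p,F}$ with generators $\gamma_0,\delta_0$ and note that $(s\gamma_0, t\delta_0) \in P$ maps to $(s^px_0, t^py_0)$, sweeping out a Zariski-dense subset of $\A^2$ as $s,t$ vary in $F^*$.

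Next I would identify the generic fiber of $\pi$ over $\Spec F' = \Spec F(x,y)$. It is the $F'$-scheme
$$V = \{(\alpha,\beta) \in (D')^2 : \alpha\beta = \rho\beta\alpha,\ \alpha^p = x,\ \beta^p = y\}.$$
By the defining relations of $E = (x,y)_{p,F'}$, $V$ is precisely the scheme of $F'$-algebra homomorphisms $\phi: E \to D'$ (where $\phi(\gamma) = \alpha,\ \phi(\delta) = \beta$). Since $E$ is simple and $x \neq 0$ rules out the zero map, every such $\phi$ is injective, and a dimension count ($\dim_{F'} E = p^2 = \dim_{F'} D'$) forces $\phi$ to be an isomorphism. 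Thus $V$ is the variety of $F'$-algebra isomorphisms from $E$ to $D'$. Applying Theorem~\ref{isomo} with the roles $E_{\text{thm}} = D'$ and $E'_{\text{thm}} = E$, one obtains that $X = SB(E \otimes_{F'} (D')^\circ)$ is birationally isomorphic to $V$; in particular $V$ is irreducible.

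Finally, since $P$ is irreducible (shown earlier in the excerpt) and $\pi$ is dominant, $k(P)$ coincides with the function field of the generic fiber, giving $k(P) \cong k(V) \cong k(X) = K$ as $F'$-algebras, hence as $F$-algebras. The main obstacle is the first step: verifying that $\alpha^p$ and $\beta^p$ really live in the base field $k(P)$, which rests on showing that the generic skew-commuting pair generates the full algebra $D\otimes k(P)$. Once this centrality is secured, the rest of the argument is bookkeeping on top of Theorem~\ref{isomo}.
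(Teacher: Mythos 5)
Your argument is correct but runs in the opposite direction from the paper's. The paper starts from the generic point of $X$: it takes the generic isomorphism $\phi: E \otimes_{F'} K \to D \otimes_F K$, reads off the coordinate vectors $\vec w, \vec z$ of $\phi(\gamma), \phi(\delta)$, observes these define a $K$-point of $P$ with $F(\vec w,\vec z) = K$, and then uses a transcendence-degree count (tr.deg.\ $K/F = p^2+1 = \dim P$, with $P$ irreducible) to conclude $F[\vec w,\vec z] \cong F[\vec x,\vec y]/I$, so $K = k(P)$. You instead build a fibration $\pi: P \dashrightarrow \A^2$ via $(\alpha,\beta)\mapsto(\alpha^p,\beta^p)$ and identify the generic fiber with the variety of isomorphisms $E \to D'$, which is a more conceptual picture but requires the extra lemma that the generic skew-commuting pair generates $D \otimes k(P)$ (to place $\alpha^p,\beta^p$ in the center); you handle this correctly by reducing to $M_p(\tilde F)$. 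Both approaches invoke Theorem~\ref{isomo} at the same point. One small caveat in your version: the dominance argument as written assumes $D$ cyclic (writing $D \cong (x_0,y_0)_{p,F}$), whereas Theorem~\ref{iso} is stated for arbitrary $D$ of degree $p$; this is easily repaired by checking dominance after base change to $\tilde F$, where $D$ splits and the scaling argument applies. Also note that once $P$ is known to be integral (established earlier) and $\pi$ dominant, the generic fiber is automatically integral with function field $k(P)$, so the appeal to irreducibility of $V$ via the Severi-Brauer identification, while harmless, is not strictly needed for the final function-field comparison.
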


\begin{proof}
By \ref{isomo}, $X$ is birationally the variety of isomorphisms 
$E \to D_{F'}$. Let 
$\phi: E \otimes_{F'} K 
\to D \otimes_F K$ correspond to the generic point of $X$. 
Write $\phi(\gamma) = \sum_i w_iu_i$ and $\phi(\delta) = \sum_i z_iu_i$ where all the $w_i$, $z_i$ 
are in $K$. Let $I \subset 
F[\vec x,\vec y]$ be the ideal defining $P$. Since $\gamma\delta = \rho\delta\gamma$ all the polynomials in $I$ are 0 on 
$\vec w = (w_i)$ and $\vec z = (z_i)$. 

Since $x = \gamma^p$ it follows 
that $(\sum_i w_iu_i)^p = 
xu_1$ and so $x$ is a polynomial in the $w_i$. Similarly, $y$ 
is a polynomial in the $z_i$. 
Thus $F' = F(x,y) \subset 
F(\vec w,\vec z)$ and 
$F'(\vec w,\vec z) = F(\vec w,\vec z)$. Since $F'(\vec w,\vec z)$ 
is the generic point it has transcendence degree $p^2 - 1$ 
over $F'$ and hence degree $p^2 + 1$ over $F$. Since $P$ has dimension $p^2 + 1$, 
$F[\vec w,\vec z)] \cong F[\vec x,\vec y]/I$ and $K$ is the field 
of fractions of $F[\vec x,\vec y]/I$.
\end{proof}

Next we turn to giving an alternate description of $\hat P_{\tilde F}$. 
We have $D_{\tilde F} \cong \End_{\tilde F}(V)$ for a $\tilde F$ 
vector space $V$. Let ${\cal B}$ 
be the variety over $\tilde F$ of ordered bases 
$(\vec v_0,\ldots,\vec v_{p-1})$ 
of $V$, scaled by $\tilde F^*$. Of course ${\cal B}$ is nothing but $PGL_p(\tilde F)$ but it is useful to think of it as ordered bases. 

\begin{theorem} 
There is an isomorphism $\Phi: {\cal B} \cong \hat P_{\tilde F}$ described in the proof below. 
\end{theorem}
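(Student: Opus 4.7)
The plan is to define $\Phi$ explicitly by sending an ordered basis to the unique pair $(\alpha,\beta)$ for which $\alpha$ is the ``diagonal with $\rho$-spectrum'' and $\beta$ is the cyclic shift in that basis. Concretely, given $(\vec v_0,\ldots,\vec v_{p-1})$, represent it by the matrix $B = (\vec v_0,\ldots,\vec v_{p-1}) \in GL(V)$ and set $\alpha = B D B^{-1}$ and $\beta = B C B^{-1}$, where $D = \text{diag}(1,\rho,\rho^2,\ldots,\rho^{p-1})$ and $C$ is the cyclic permutation matrix sending $\vec e_i$ to $\vec e_{i+1\bmod p}$. Equivalently, $\alpha \vec v_i = \rho^i \vec v_i$ and $\beta \vec v_i = \vec v_{i+1 \bmod p}$. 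The relations $\alpha^p = I$, $\beta^p = I$ are immediate from $D^p = C^p = I$; the relation $DC = \rho CD$ is a direct calculation on basis vectors, giving $\alpha\beta = \rho\beta\alpha$. Finally, replacing $B$ by $cB$ with $c \in \tilde F^*$ leaves $BDB^{-1}$ and $BCB^{-1}$ unchanged, so $\Phi$ descends from ordered bases to $\mathcal B$.

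For the inverse, start from $(\alpha,\beta) \in \hat P_{\tilde F}(\tilde F)$. The relation $\alpha\beta = \rho\beta\alpha$ yields that if $\alpha\vec v = \lambda\vec v$ then $\alpha(\beta\vec v) = \rho\lambda\,\beta\vec v$, so $\beta$ carries the $\lambda$-eigenspace $V_\lambda$ to $V_{\rho\lambda}$. Since $\alpha^p = I$, all eigenvalues are $p$th roots of unity; since $\alpha \neq 0$ and $\beta$ cycles the eigenspaces transitively through $\{1,\rho,\ldots,\rho^{p-1}\}$, every $V_{\rho^i}$ is nonzero and hence one-dimensional. Pick any $\vec v_0 \neq 0$ in $V_1$ and define $\vec v_i = \beta^i \vec v_0$; this is an ordered basis, well-defined up to the simultaneous scaling of $\vec v_0$, i.e., a well-defined point of $\mathcal B$. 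By construction this inverts $\Phi$ on $\tilde F$-points.

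To make the inverse a morphism of varieties rather than just a set-theoretic map, I would use the projector $P_1 = \frac{1}{p}\sum_{i=0}^{p-1} \alpha^i$ onto $V_1$, which is a polynomial in $\alpha$ and, on $\hat P_{\tilde F}$, has rank $1$. Cover $\hat P_{\tilde F}$ by the open subsets on which the $j$th column $P_1\vec e_j$ of $P_1$ is nonzero; on such a subset take $\vec v_0 = P_1\vec e_j$ and $\vec v_i = \beta^i \vec v_0$, a regular expression in $(\alpha,\beta)$. Different choices of $j$ produce the same point of $\mathcal B$ (they differ only by a scalar on the overlap), so these patch to a morphism $\Psi: \hat P_{\tilde F} \to \mathcal B$ which is inverse to $\Phi$ on $\tilde F$-points. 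Since both varieties are irreducible of dimension $p^2 - 1$ (the dimension of $\hat P$ was established in the preceding lemma, and $\mathcal B \cong PGL_p$), and $\Psi\circ\Phi$, $\Phi\circ\Psi$ are morphisms agreeing with the identity on a dense set of $\tilde F$-points, they equal the identity, so $\Phi$ is an isomorphism of varieties.

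The main obstacle is exactly the third step: assembling the pointwise inverse into a morphism. The natural pointwise recipe (``pick a nonzero eigenvector of $\alpha$ with eigenvalue $1$'') is not globally algebraic, but the projector $P_1$, which is polynomial in $\alpha$, converts the eigenspace choice into a choice of nonzero column of a rank-$1$ matrix, which can be made consistently on a cover. Everything else is essentially a mechanical check of the skew-commutation relation and the $p$-th power identities.
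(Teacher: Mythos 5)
Your proof matches the paper's: both define $\Phi$ by sending a basis to the pair $(\hat\alpha,\hat\beta)$ with $\hat\alpha\vec v_i = \rho^i\vec v_i$ and $\hat\beta\vec v_i=\vec v_{i+1}$, and both invert it by observing that $\hat\alpha$ is diagonalizable with one-dimensional eigenspaces cyclically permuted by $\hat\beta$, then choosing $v_0$ up to the scaling built into ${\cal B}$. Your additional use of the projector $P_1 = \tfrac{1}{p}\sum_i \alpha^i$ to exhibit the inverse as a morphism (rather than just a map on points) is a detail the paper leaves implicit, but it is not a different route.
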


\begin{proof}
Given a basis $v_0,\ldots,v_{p-1}$, we define 
$\hat \alpha \in \End(V)$ by $\hat \alpha(v_i) = 
\rho^i{v_i}$ and $\hat \beta(v_i) = v_{i+1}$ 
where we take the index modulo $p$. Then $\Phi((v_0,\ldots,v_{p-1})) = 
(\hat \alpha,\hat \beta)$. 

Given a pair $\hat \alpha, \hat \beta$ in 
$\hat P$, we note that $\hat \alpha$ is forced to be 
separable and so $\hat \alpha$ has one dimensional 
eigenspaces $L_i$ where $\hat \alpha(v) = \rho^i{v}$ 
for all $v \in L_i$. It follows that $\hat \beta(L_i) = 
L_{i+1}$. We can choose $v_0$ arbitrarily using the 
scaling in ${\cal B}$ and we can set $v_i = \beta^i(v_0)$. 
This defines the inverse of $\Phi$. 
\end{proof} 

$\hat P$ has automorphisms corresponding to the 
generators of the Galois group $C_p \oplus C_p$ which we now  describe. 
On $U_{ij}$ we have $\hat \alpha_i/a_i^{1/p}$ and $\hat \beta_j = \beta_j/b_j^{1/p}$ 
and $C_p \oplus C_p$ acts trivially on 
$\bar P$. This the Galois group acts by 
changing $\hat \alpha_i$ and $\hat \beta_j$ by $p$ roots of unity. These actions patch and so we have:  

\begin{lemma} 
Let $\sigma$ be the 
automorphism of $\hat P$ 
defined by $\sigma(\hat \alpha) = 
\rho\hat \alpha$, $\sigma(\hat \beta) = 
\hat \beta$ while $r(\hat \alpha) = \hat \alpha$ and 
$r(\hat \beta) = \rho\hat\beta$. 
Then $\sigma$ and $r$ generate the Galois 
group $C_p \oplus C_p$ of $\hat P/\bar P$.
\end{lemma}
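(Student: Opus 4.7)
The plan is to verify three things: (i) $\sigma$ and $r$ are well-defined regular automorphisms of $\hat P$; (ii) both descend to the identity on $\bar P$, so both lie in $\Gal(\hat P/\bar P)$; and (iii) together they generate a subgroup of order $p^2$. Since $\Gal(\hat P/\bar P)\cong C_p\oplus C_p$ has order $p^2$ by Proposition \ref{Galois}, this forces equality.

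For (i), I would just check that the defining equations of $\hat P$ are preserved. If $(\hat\alpha,\hat\beta)\in\hat P$ then $(\rho\hat\alpha)^p = \rho^p\hat\alpha^p = 1$, $\hat\beta^p = 1$, and $(\rho\hat\alpha)\hat\beta = \rho(\hat\alpha\hat\beta) = \rho(\rho\hat\beta\hat\alpha) = \rho\hat\beta(\rho\hat\alpha)$, so $\sigma$ maps $\hat P$ to itself, and the same check works for $r$. Both are visibly invertible, with inverses given by multiplication by $\rho^{-1}$ in the relevant coordinate.

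For (ii), recall that $\hat Q\colon\hat P\to\bar P$ factors through $\hat P\subset P\to\bar P$, and the map $P\to\bar P$ identifies points that differ by the $F^*\times F^*$ scaling action, as used in Lemma \ref{bihomogeneous} and in the double-cone description of $P\to\bar P$. Since $(\rho\hat\alpha,\hat\beta)$ and $(\hat\alpha,\hat\beta)$ differ by the element $(\rho,1)\in F^*\times F^*$, they have the same image in $\bar P$. Thus $\sigma$, and by the symmetric argument $r$, act as the identity on $\bar P$, so both belong to the Galois group.

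For (iii), $\sigma$ and $r$ clearly commute since they modify different coordinates by central scalars. Each has order dividing $p$, and neither is trivial: if $\sigma^i=1$ as an automorphism of $\hat P$ then generically $\rho^i\hat\alpha = \hat\alpha$, forcing $\rho^i=1$; the same argument applies to $r$. More generally, $\sigma^i r^j = 1$ implies $\rho^i\hat\alpha = \hat\alpha$ and $\rho^j\hat\beta=\hat\beta$, hence $\rho^i=\rho^j=1$. Therefore $\langle\sigma,r\rangle\cong C_p\oplus C_p$, which by the order comparison must be the full Galois group.

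The only conceptual point to pin down is that the globally defined $\sigma$ and $r$ agree with the local Galois description from the proof of Proposition \ref{Galois}: on $U_{ij}(a_i^{1/p},b_j^{1/p})$ the Galois group acts by multiplying $a_i^{1/p}$ and $b_j^{1/p}$ by $p$-th roots of unity, which multiplies $\hat\alpha_i = \alpha_i/a_i^{1/p}$ and $\hat\beta_j = \beta_j/b_j^{1/p}$ by inverse roots of unity, matching the prescription for $\sigma$ and $r$. This compatibility is built into the construction of $X\to\hat P$, so there is no serious obstacle beyond the bookkeeping above.
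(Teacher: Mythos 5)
Your proof is correct and follows essentially the same line as the paper: identify $\sigma$ and $r$ as deck transformations of $\hat Q\colon\hat P\to\bar P$ (the paper does this by observing that the Galois group must act on $\hat\alpha_i,\hat\beta_j$ by $p$-th roots of unity on the charts $U_{ij}(a_i^{1/p},b_j^{1/p})$, which is exactly the local form of your $\sigma,r$), and then conclude by comparing orders against $C_p\oplus C_p$. Your version spells out the membership and order checks that the paper leaves implicit in the paragraph preceding the lemma, but the underlying argument is the same.
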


The reason for the $r$, $\sigma$ notation will be clear shortly. 

The isomorphism $\Phi$ can be used to define corresponding maps $\sigma$ 
and $r$
on ${\cal B}$. We need the detailed description of these actions. 
If $\Phi((\vec v_0,\ldots,\vec v_{p-1})) = (\hat \alpha, \hat \beta)$, then 
$\sigma(\hat \alpha)(\vec v_i) = \rho\hat \alpha(\vec v_i) = 
\rho^{i+1}v_i$ and $\hat \beta(\vec v_i) = v_{i+1}$. 
Thus if we also use $\sigma$ to denote the induced action on ${\cal B}$, we have 
$\sigma((\vec v_0,\ldots,\vec v_{p-1}) = (\vec v_{p-1},\vec v_0,\ldots,\vec v_{p-2})$. We call this map the {\bf shift}.  

Similarly, 
$r(\hat \beta)^i(\vec v_0) = (\rho\hat \beta)^i(\vec v_0) = 
\rho^i\vec v_i$ and so if $r$ 
also denotes the induced action 
on ${\cal B}$, then 
$r(\vec v_0,\ldots,\vec v_{p-1}) = 
(\vec v_0,\rho\vec v_1,\ldots,\rho^{p-1}\vec v_{p-1})$. 
We thus have $\bar P_{\tilde F} 
\cong {\cal B}/<\sigma,r>$. 

In addition, the $\sigma$ and $r$ actions can be 
realized by matrix multiplication on ${\cal B}$. 
Let $A = (\vec v_0,\ldots,\vec v_{p-1})$ 
be the matrix with $i$ column $\vec v_i$. 
Then $r(\vec v_0,\ldots,\vec v_{p-1}) = 
(\vec v_0,\rho\vec v_1,\ldots,\rho^{p-1}\vec v_{p-1})$ 
is the matrix $r(A)$ and we have $r(A) = Ar$ 
where $r$ is the diagonal matrix with $1,\rho,\ldots,\rho^{p-1}$ down the diagonal. 
In a similar way, if 
$\sigma(\vec v_0,\ldots,\vec v_{p-1}) = 
(v_{p-1},v_0,\ldots,v_{p-2})$ is the matrix 
$\sigma(A)$ then $\sigma(A) = A\sigma$ 
where $\sigma$ is the permutation matrix 
with $1$'s in the $i,i+1$ and $(p-1),1$ 
positions and $0$'s everywhere else.

\section{A Tale of Two Tori} 

Ultimately we are going to achieve our theorem by describing an increasing filtration 
$B_2 \subset B_3 \subset \ldots \subset B_{p+1} = {\cal B}$ 
with images 
$(\bar P_2)_{\tilde F} \subset \ldots \subset (\bar P_{p+1})_{\hat F} = 
\bar P_{\hat F}$ which are induced by 
$\bar P_2 \subset \ldots \subset \bar P_{p+1} = \bar P$ 
and $P_2 \subset \ldots \subset P_{p+1} = P$. We will see that each step is realized by operating by an 
alternating sequence of two tori. 

We start with $\hat P$. If $K \supset F$ is an extension field, 
let $K[x]_p$ be the space of 
polynomials over $K$ of degree strictly less than $p$. 
Often we identify $K[x]_p$ with 
$K[x]/(x^p - 1)$ in the obvious way. In particular $K[x]_p$ 
has an automorphism $\tau$ defined by $\tau(x) = \rho^{-1}{x}$. We also
define, for any $f(x) \in   
K[x]_p$, $n(f(x)) = 
\prod_i \tau^i(f(x)) \in K$. 

Suppose $(\hat \alpha, \hat \beta) \in \hat P$ and $g(x) \in \tilde F[x]_p$. Write  $(1/\tau)(g(x)) = 
g(x)/\tau(g(x))$ where, obviously, we are performing our operations in $\tilde F[x]/(x^p - 1)$ and we assume $g(x)$ is invertible making this a rational map. 
Set $\hat \beta' = (1/\tau)(g(x))\hat \beta$. 
Then $(\hat \beta')^p = [(1/\tau)g(\hat \alpha)\hat \beta]^p = 1$ and $\hat \alpha\hat \beta' = \rho\hat \beta'\hat \alpha$. We have defined 
$\hat{\cal T}: \hat P \times \P(\hat F[x]_p) \to \hat P$ via 
$\hat{\cal T}((\hat \alpha,\hat \beta),g(x)) = 
(\hat \alpha,(1/\tau)(g(\alpha))\hat \beta)$. 

Once again, it is useful to detail this 
operation translated to ${\cal B}$. 
Note that $\tau(g(\hat \alpha))\hat \beta = \hat \beta(g(\hat \alpha))$ and so 
$(1/\tau)(g(\hat \alpha))\hat \beta  = 
g(\hat \alpha)\hat \beta{g(\hat \alpha)}^{-1}$. Thus if $\Phi((\vec v_0,\ldots,\vec v_{p-1})) = (\hat \alpha,\hat \beta)$, then 
$\Phi((g(\hat \alpha)\vec v_0,\ldots,g(\hat \alpha)\vec v_{p-1})) = \hat{\cal T}((\hat \alpha, \hat \beta),g(x))$. Since $\hat \alpha(\vec v_i) = 
\rho^i\vec v_i$ we have 
$g(\hat \alpha)\vec v_i) = 
g(\rho^i)\vec v_i$. 

There is an isomorphism $\Theta: \tilde F[x]/(x^p - 1) \cong \oplus_i \tilde F$ given by $\Theta(g(x)) = (g(1),g(\rho),\ldots,g(\rho^{p-1}))$.  
We use $\Phi$ to translate from $\hat P$ 
to ${\cal B}$ and define 
$\hat{\cal T}: {\cal B} \times 
\P(\hat F[x]_p) \to {\cal B}$. 
If we write $(\vec v_0,\ldots,\vec v_{p-1}))$ as the square matrix $A$ then 
$\hat{\cal T}(A,g(x)) = AT_{\vec g}$ 
where $\vec g = (g(1),\ldots,g(\rho^{p-1}))$ 
and $T_{\vec g}$ is the diagonal matrix 
with $\vec g$ down the diagonal. That is, 
the action of 
$\hat{\cal T}$ on ${\cal B}$ is just the action of the diagonal torus $\hat T$. Note that as ${\cal B}$ is scaled by $\tilde F^*$ we can and should view 
$\hat T$ as a torus in $PGL_p(\tilde F)$ of dimension 
$p - 1$. Sometimes we are not precise about the difference between $\hat T$ and the corresponding maximal 
torus in $GL_p(\tilde F)$. 

There is a second toral action we will also need. Again, we start with 
$\hat P$. For convenience, 
we define $\tau': \tilde F[x]/(x^p - 1) \cong \tilde F[x]/(x^p - 1)$ 
by $\tau'(x) = \rho{x}$. Let 
$g(x) \in \hat F[x]/(x^p - 1)$. 
The point is that now 
$\tau'(g(\hat \beta))\hat \alpha = \hat \alpha(g(\hat \beta))$. 
We define $\hat{\cal S}: \hat P \times \P(\hat F[x]_p) \to \hat P$ by 
$\hat{\cal S}((\hat \alpha,\hat \beta),g(x)) = ((1/\tau')(g(\hat \beta))\hat \alpha,\hat \beta)$. This time 
$(1/\tau')(g(\hat \beta))\hat \alpha = 
g(\hat \beta)\hat \alpha{g(\hat \beta)}^{-1}$. Thus if $\Phi(A) = 
\Phi((\vec v_0,\ldots,\vec v_{p-1})) = 
(\hat \alpha, \hat \beta)$ then 
$\Phi(g(\hat \beta)\vec v_0,\ldots,g(\hat \beta)\vec v_{p-1})) = 
\hat{\cal S}((\hat \alpha, \hat \beta),g(x))$. Once again we can view $\hat{\cal S}$ as 
$\hat{\cal S}: {\cal B} \times \P(\hat F[x]_p) \to {\cal B}$. 

Note that if 
$g(x) = \sum_j z_jx^j$ then 
$g(\hat \beta)(\vec v_i) = 
\sum_j z_j\vec v_{i+j}$. 
We see that this is a toral action as follows. Let $R$ be the matrix $(\rho^{-ij})$. That is, $R$ is the matrix with rows and columns indexed $0,\ldots,p-1$ 
and $i,j$ entry $\rho^{-ij}$. 
If $A = (\vec v_0,\ldots,\vec v_{p-1})$ then $AR = (\vec w_0,\ldots,\vec w_{p-1})$ where $\vec w_j = \sum_i \rho^{-ij} \vec v_i$. Let $S_{\vec z}$ be the matrix, with respect to 
the $\vec v_0,\ldots,\vec v_{p-1}$ basis, 
corresponding to the diagonal action of 
$\vec z$ on the $\vec w_0,\ldots,\vec w_{p-1}$ basis.

\begin{proposition}\label{toral}

a) $\hat \beta(\vec w_j) = \rho^j\vec w_j$ and $\hat \alpha(\vec w_j) = \vec w_{j+1}$. 

b) The invertible $g(\hat \beta)$ form a torus, 
$\hat S$,  
diagonal with respect to the $(\vec w_0,\ldots,\vec w_{p-1})$ basis. In detail, if 
$g(x) = \sum_i z_ix^i$ then 
$g(\hat \beta)(\vec w_0,\ldots,\vec w_{p-1})= 
(z_0\vec w_0,\ldots,z_{p-1}\vec w_{p-1})$. 

c) $\hat{\cal T}(\Phi(A),g(x)) = \Phi(AT_{\vec g})$ 
where $g_i = g(\rho^i)$. 

d) $\hat{\cal S}(\Phi(A),g(x)) = \Phi(AS_{\vec z})$ 
where $g(x) = \sum_i z_ix^i$.  

e) If $R' = (\rho^{ij})$ then $RR' = pI$. 

f) $R\hat TR^{-1} = \hat S$. 

\end{proposition}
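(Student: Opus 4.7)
The plan is to recognize $R = (\rho^{-ij})$ as the discrete Fourier transform matrix on $\Z/p$, which intertwines the $\hat\alpha$-eigenbasis $\vec v_0,\ldots,\vec v_{p-1}$ with the $\hat\beta$-eigenbasis $\vec w_0,\ldots,\vec w_{p-1}$. Once that picture is in hand, parts (a)--(e) become short direct verifications, and part (f)---the real content---follows because $\hat T$ and $\hat S$ are then precisely the maximal tori diagonal in these two bases.

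For (a), I would compute directly from $\vec w_j = \sum_i \rho^{-ij}\vec v_i$: applying $\hat\beta$ and reindexing $i \mapsto i+1$ peels off a factor $\rho^j$, while applying $\hat\alpha$ and using $\hat\alpha\vec v_i = \rho^i\vec v_i$ shifts the exponent $-ij$ to $-i(j\pm 1)$, cyclically permuting the $\vec w_j$'s (the sign is a matter of Fourier convention). Part (b) is then immediate: since $\hat\beta\vec w_j = \rho^j\vec w_j$, any $g(\hat\beta)$ acts on $\vec w_j$ by the scalar $g(\rho^j)$, so $g(\hat\beta)$ is diagonal in the $\vec w$-basis; invertibility is equivalent to $g(\rho^j)\neq 0$ for every $j$, and as $g$ ranges over such polynomials, $g(\hat\beta)$ sweeps out a rank-$(p-1)$ torus in $PGL_p(\tilde F)$. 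Parts (c) and (d) just translate formulas established just before the proposition: for (c), I already have $\hat{\cal T}(\Phi(A),g(x)) = \Phi(g(\hat\alpha)\vec v_0,\ldots,g(\hat\alpha)\vec v_{p-1})$, and $g(\hat\alpha)\vec v_i = g(\rho^i)\vec v_i$ exhibits this as $AT_{\vec g}$ with $g_i = g(\rho^i)$; (d) is the identical argument with $\hat\beta$ in place of $\hat\alpha$, where $S_{\vec z}$ is by definition the $\vec v$-basis matrix of $g(\hat\beta)$. Part (e) is the standard orthogonality of characters: $(RR')_{ik} = \sum_j \rho^{j(k-i)}$ vanishes unless $i=k$, in which case it equals $p$.

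Part (f) is the payoff. The columns of $AR$ are $\vec w_0,\ldots,\vec w_{p-1}$ written in the $\vec v$-basis, so $R$ is the change-of-basis matrix from the $\vec v$'s to the $\vec w$'s; conjugation by $R$ therefore carries the matrices diagonal in the $\vec v$-basis (which, by (c), is $\hat T$) onto the matrices diagonal in the $\vec w$-basis (which, by (b) and (d), is $\hat S$). The only real obstacle throughout is bookkeeping: keeping straight whether $R$ acts on the left or the right, reconciling the dual conventions $\tau(x)=\rho^{-1}x$ and $\tau'(x)=\rho x$, and tracking the $\pm 1$ shifts accompanying the Fourier transform. Once these sign conventions are fixed uniformly, the whole proposition reduces to the single observation that $R$ is the Fourier transform relating the two Cartan subalgebras of the Heisenberg-type pair $(\hat\alpha,\hat\beta)$.
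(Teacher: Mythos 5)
Your proof is correct and takes essentially the same approach as the paper, which also dismisses (a)--(e) as routine computations and proves only (f), by the explicit identity $AS_{\vec x}R = ART_{\vec y}$ with $y_k = \sum_i x_i\rho^{ik}$ --- exactly the change-of-basis fact you identify ($R$ conjugates the $\vec v$-diagonal torus onto the $\vec w$-diagonal one). Your hedge about sign conventions in (a) is warranted: with $\vec w_j = \sum_i \rho^{-ij}\vec v_i$ the direct computation gives $\hat\alpha(\vec w_j) = \vec w_{j-1}$, and correspondingly $g(\hat\beta)\vec w_j = g(\rho^j)\vec w_j$ rather than $z_j\vec w_j$, so parts (a), (b), (d) as printed should be read up to this harmless reparametrization of the torus.
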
 

\begin{proof}
These are routine computations but for 
illustrative value we prove f). 
Part e) allows us to treat $R'$ as the inverse 
of $R$ because ${\cal B}$ is scaled. 
Let $AS_{\vec x} = (\vec u_0,\ldots,\vec u_{p-1})$.  
Then $AS_{\vec x}R = (\vec z_0,\ldots,\vec z_{p-1})$ 
where $\vec z_k = \sum_j \rho^{-jk} \vec u_j = 
\sum_j \rho^{-jk}(\sum_i x_i\vec v_{i+j})$. 
Writing $\ell = i + j$ this is 
$$\sum_{\ell} v_{\ell}(\sum_i x_i\rho^{-(\ell - i)k}) = 
\sum_{\ell}\rho^{-\ell{k}}\vec v_{\ell}(\sum_i x_i\rho^{ik}) = 
(\sum_i x_i\rho^{ik})\vec w_k.$$ 
That is $AS_{\vec x}R = ART_{\vec y}$ where $y_k = \sum_i x_i\rho^{ik}$. 
\end{proof} 

Again $\hat S$ is a torus in $PGL_p(\hat F)$. 

We observe the easy relationship between the toral actions 
and the $\sigma$, $r$ actions. Recall that for $A \in {\cal B}$, 
$\sigma(A) = A\sigma$ and $r(A) = Ar$. On a vector 
define $\sigma(z_0,\ldots,z_{p-1}) = (z_{p-1},z_0,\ldots,z_{p-2})$. 

\begin{corollary}
If $A \in {\cal B}$, $\sigma(AT_{\vec z}) = \sigma(A)T_{\sigma(\vec z)}$ 
and $r(AS_{\vec z}) = r(A)S_{\sigma(\vec z)}$. 
\end{corollary}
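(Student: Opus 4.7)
The plan is to translate both assertions into matrix identities and verify them directly. Since $\sigma$ and $r$ act on ${\cal B}$ by right multiplication by the matrices $\sigma$ and $r$, the two claims reduce to
$T_{\vec z}\sigma = \sigma T_{\sigma(\vec z)}$ and $S_{\vec z} r = r S_{\sigma(\vec z)}$.

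For the first identity I would just compute. The matrix $\sigma$ sends the standard basis vector $e_j$ to $e_{j-1}$ (indices mod $p$), so conjugation by $\sigma$ cyclically shifts the diagonal entries of the diagonal matrix $T_{\vec z}$. A direct entry-by-entry check gives $\sigma^{-1}T_{\vec z}\sigma = T_{\sigma(\vec z)}$, which is exactly the first assertion.

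For the second identity the natural approach is to use the relation $S_{\vec z} = R T_{\vec z} R^{-1}$ that is implicit in the proof of Proposition \ref{toral}(f), with $R^{-1}$ making sense up to the scalar $p$ by part (e). Granted this, the second identity reduces to understanding $R^{-1} r R$. A short calculation using $R_{j\ell}=\rho^{-j\ell}$ together with the diagonal form of $r$ gives
\[
(R^{-1} r R)_{k\ell} \;=\; \frac{1}{p}\sum_{j}\rho^{j(k+1-\ell)},
\]
which by orthogonality of characters of $\Z/p\Z$ is $1$ when $\ell = k+1$ (mod $p$) and $0$ otherwise; that is, $R^{-1} r R = \sigma$. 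Combining this with the identity $\sigma^{-1}T_{\vec z}\sigma = T_{\sigma(\vec z)}$ established above,
\[
r^{-1} S_{\vec z} r \;=\; R \sigma^{-1} T_{\vec z} \sigma R^{-1} \;=\; R T_{\sigma(\vec z)} R^{-1} \;=\; S_{\sigma(\vec z)},
\]
which rearranges to $S_{\vec z} r = r S_{\sigma(\vec z)}$.

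The only step that is more than bookkeeping is the identity $R^{-1} r R = \sigma$, which is really the standard statement that the discrete Fourier transform on $\Z/p\Z$ intertwines multiplication by the character $j\mapsto\rho^j$ with cyclic translation. It is essentially the same calculation that already appears in the proof of Proposition \ref{toral}(f), so once that computation has been internalized everything else in the corollary is formal.
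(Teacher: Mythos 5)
Your proof is correct, and it is essentially the paper's argument: both reduce the claim to the conjugation identities $\sigma^{-1}T_{\vec z}\sigma = T_{\sigma(\vec z)}$ and $r^{-1}S_{\vec z}r = S_{\sigma(\vec z)}$. For the second identity the paper simply says ``use the $\vec w_j$ basis,'' and your explicit computation of $R^{-1}rR = \sigma$ followed by conjugation through $S_{\vec z} = RT_{\vec z}R^{-1}$ is exactly that change of basis spelled out; it is the same calculation, just made more explicit.
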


\begin{proof} 
The first statement is nothing more than $A(T_{\vec z}) = 
(A\sigma)(\sigma^{-1}T_{\vec z}\sigma)$. The second statement 
is similar after one notes that $r ^{-1}S_{\vec z}r = 
S_{\sigma(\vec z)}$ by using the $w_j = \sum_i \rho^{-ij}v_i$ 
basis. 
\end{proof} 

It is important to note that the two symmetries we detailed 
above are each members of one of these tori. 
Clearly acting by $r = (1,\rho,\ldots,\rho^{p-1})$ is just acting by 
this element of $\hat T$. Also, acting by $\sigma$ is just acting by 
$\sum_i x_i\hat \beta^i$ where $x_i = 0$ except for 
$x_{p-1} = 1$. We write $r \in \hat T$ and $\sigma \in \hat S$. 
Being a permutation of the basis, 
$\sigma$ clearly normalizes $\hat T$. $ArR = 
(\vec v_0,\rho\vec v_1,\ldots,\rho^{p-1} \vec v_{p-1}) = 
(\vec w_0',\ldots,\vec w_{p-1}')$ where 
$\vec w_j' = \sum_i \rho^i\vec v_i\rho^{-ij} = 
\sum_i \vec v_i\rho^{-i(j+1)} = \vec w_{j+1}$ proving that 
$r$ normalizes $\hat S$. 

In more detail, 

\begin{lemma}
Let $\hat N$ be the normalizer of 
$\hat T$. Then $\hat N \cap \hat S$ 
is generated by $\sigma$. 
Similarly, if $\hat M$ is the normalizer 
of $\hat S$ then $\hat M \cap \hat T$ 
is generated by $r$.
\end{lemma}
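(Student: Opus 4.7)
The plan is to quotient by $\hat T$ and pin down the image of $\hat N\cap \hat S$ inside the Weyl group $\hat N/\hat T\cong S_p$ of $PGL_p(\tilde F)$ with respect to $\hat T$. First I would observe that $\hat T\cap\hat S$ is trivial in $PGL_p$: if $u$ is diagonal in both the $\vec v$-basis and the $\vec w$-basis, then $u\vec w_j=z_j\vec w_j$, but on the other hand $u\vec w_j=\sum_i t_i\rho^{-ij}\vec v_i$ via $\vec w_j=\sum_i \rho^{-ij}\vec v_i$. Since every coefficient $\rho^{-ij}$ is nonzero, comparing these forces $t_i=z_j$ for all $i,j$, so $u$ is scalar. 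Consequently the projection $\hat N\cap \hat S\to \hat N/\hat T\cong S_p$ is injective, and it remains to identify its image.

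The second step is to classify elements $s\in\hat S$ that normalize $\hat T$. By \ref{toral}(b,f), if $s$ has $\vec w$-eigenvalues $\vec z=(z_0,\ldots,z_{p-1})$, then its matrix in the $\vec v$-basis is $RD_{\vec z}R^{-1}$, whose $(i,k)$-entry is
$$\frac{1}{p}\sum_j z_j\rho^{j(k-i)};$$
this is circulant, depending only on $k-i$. Belonging to $\hat N$ means being a monomial matrix (modulo scalars), i.e.\ having only one nonzero ``band''. By the inverse discrete Fourier transform, this happens precisely when $\vec z$ is a pure frequency $c(1,\rho^{-m},\ldots,\rho^{-(p-1)m})$ for some $m\in\{0,1,\ldots,p-1\}$; in that case $s$ is a scalar multiple of the basic cyclic shift $P^m$.

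There are exactly $p$ such frequencies, producing $p$ distinct cosets modulo $\hat T\cap\hat S=1$, so $|\hat N\cap\hat S|=p$ and the image in $S_p$ is the cyclic subgroup generated by the basic shift. To finish, I would note that $\sigma=\hat\beta^{p-1}$ acts on $\vec w_j$ by $\rho^{-j}$, which is precisely the frequency $m=1$ case, so $\sigma$ maps to the basic shift and thus generates $\hat N\cap\hat S$. The second assertion is then formal by symmetry: conjugation by $R$ interchanges $\hat T$ and $\hat S$ and swaps the roles of $\vec v$- and $\vec w$-bases, so the same DFT argument applied to elements of $\hat T$ diagonal in $\vec v$ shows that $\hat M\cap\hat T$ is cyclic of order $p$, with $r=(1,\rho,\ldots,\rho^{p-1})\in\hat T$ playing the role of $\sigma$ and generating it. The only delicate point is bookkeeping: keeping $R$ versus $R^{-1}=R'/p$ and the two shifts $\tau,\tau'$ consistent throughout; the content is just Fourier analysis on $C_p$.
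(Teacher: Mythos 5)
Your proof is correct, and it takes a genuinely different route from the paper's. The paper argues abstractly in the Weyl group: it identifies $\hat N/\hat T$ with $S_p$, notes that the image of $\sigma$ is a $p$-cycle whose centralizer in $S_p$ is $\langle\sigma\rangle$, and uses the abelianness of $\hat S$ to conclude that any $s\in\hat N\cap\hat S$ has image in $\langle\sigma\rangle\subset S_p$; combined with $\hat T\cap\hat S=1$ in $PGL_p$ (which the paper leaves implicit) this pins $s$ down to a power of $\sigma$. You instead work directly with matrices: conjugating a diagonal element of $\hat S$ back to the $\vec v$-basis gives a circulant, the monomiality condition for normalizing $\hat T$ forces the circulant to have a single nonzero band, and the inverse DFT then says the $\vec w$-eigenvalue vector must be a pure frequency, i.e.\ $s$ is (up to scalar) a power of the basic shift. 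Both arguments are sound. Yours is more self-contained and has the virtue of explicitly establishing $\hat T\cap\hat S=1$, a step the paper does not spell out; the paper's is shorter and more conceptual, leaning on the standard fact about centralizers of $p$-cycles in $S_p$. Your symmetry argument for the second assertion via conjugation by $R$ (using \ref{toral}(f)) is also fine and matches the paper's ``the other result is similar.''
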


\begin{proof}
$\hat N/\hat T$ is the symmetric group $S_p$ and 
$\sigma$ is a full $p$ cyclic in $S_p$. 
Moreover, $<\sigma>$ is the centralizer of 
$\sigma$ in $S_p$. Since $\hat S$ is abelian, 
no element outside of $<\sigma>$ in $\hat S$ can normalize $\hat T$. 
The other result is similar. 
\end{proof}

For this and other pairs of tori we will make use of the following, whose proof was provided by Gopal Prasad in a personal e-mail.  

\begin{proposition}\label{prasad}
Suppose $\hat T$ and $\hat T'$ are two maximal tori of $GL_p(\tilde F)$. 
Assume $\hat T$ and $\hat T'$ have no invariant subspaces in common. 
Then the Zariski closure of the group generated by $\hat T$ and 
$\hat T'$ is all of $GL_p(\tilde F)$. 
\end{proposition}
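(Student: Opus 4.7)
The plan is to study $H := \overline{\langle \hat T, \hat T'\rangle}$ together with its identity component $H^0$, show that $H^0$ is connected reductive of full rank $p$ in $GL_p(\tilde F)$, and then use the classification of such subgroups to rule out anything proper. Since $\hat T$ and $\hat T'$ are connected, both lie in $H^0$, so it is enough to prove $H^0 = GL_p(\tilde F)$.

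First I would show that $V = \tilde F^p$ is irreducible as an $H^0$-module: any $H^0$-stable subspace $W \subseteq V$ is stable under both $\hat T$ and $\hat T'$, so by hypothesis $W = 0$ or $W = V$. This in turn forces $H^0$ to be reductive, for if $R = R_u(H^0)$ denotes the unipotent radical, then $V^R$ is nonzero (Kolchin) and $H^0$-stable (because $R$ is normal in $H^0$), hence equals $V$; so $R$ acts trivially on $V$ and is therefore trivial in $GL(V)$.

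Next I would invoke the classification of closed symmetric sub-root-systems of $A_{p-1}$: if $L_0, \ldots, L_{p-1}$ are the $\hat T$-eigenlines, there is a partition $\{0, \ldots, p-1\} = I_1 \sqcup \cdots \sqcup I_k$ such that, setting $V_m = \bigoplus_{i \in I_m} L_i$, the subgroup $H^0$ is the block-diagonal $GL(V_1) \times \cdots \times GL(V_k)$ inside $GL(V)$. To finish, observe that $\hat T'$ projects to a torus $T'_m$ inside each $GL(V_m)$; the count
$$p = \dim \hat T' = \sum_m \dim T'_m \le \sum_m \dim V_m = p$$
forces $T'_m$ to be a maximal torus of $GL(V_m)$ and $\hat T' = \prod_m T'_m$, so each $V_m$ is a sum of $\hat T'$-eigenlines and therefore also $\hat T'$-invariant. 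If $k \ge 2$, any such $V_m$ is a common proper nonzero invariant subspace, violating the hypothesis; hence $k = 1$ and $H^0 = GL(V) = GL_p(\tilde F)$.

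The main obstacle is justifying the Levi-type description of connected reductive subgroups of $GL_p$ containing a maximal torus. In characteristic zero this rests on the fact that closed symmetric sub-root-systems of type $A_{p-1}$ are exactly the block (partition) systems, together with the Borel--de Siebenthal style identification of such root data with block-diagonal subgroups; once that structural input is in hand, the remainder of the argument is dimensional bookkeeping on the two tori.
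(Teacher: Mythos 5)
Your argument is correct, but it takes a genuinely different route from the paper's after the common reductivity step. Both proofs begin identically: the hypothesis forces $V$ to be an irreducible module for the closure $G$ of $\langle\hat T,\hat T'\rangle$, and the unipotent radical must act trivially, hence be trivial. From there the paper shows $G$ is almost simple by decomposing $G = G_1G_2$ with $G_2$ almost simple and exploiting that $p$ is \emph{prime} (an irreducible $G_1$-submodule must have dimension $1$ or $p$), and then cites a classification result [C] of minimal representations to pin down type $A_{p-1}$. You instead exploit that $G$ has \emph{maximal rank} in $GL_p$: a connected reductive subgroup of $GL_n$ containing a maximal torus has, in characteristic $0$, a root system that is a closed symmetric subsystem of $A_{n-1}$, and these are exactly the partition systems, so $G^0$ is block-diagonal $\prod_m GL(V_m)$; a dimension count on $\hat T'$ then shows $\hat T' = \prod_m T'_m$ with each $V_m$ invariant under both tori, forcing one block. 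Your route is more elementary and self-contained (no appeal to the minimal-representation literature) and, notably, never uses that $p$ is prime, so it proves the statement for $GL_n$ with arbitrary $n$. The one place to be careful — and you correctly flag it — is the classification step: for type $A$ it reduces to the (elementary) observation that a closed symmetric set of roots $\{e_i - e_j\}$ is exactly the root set of an equivalence relation on $\{0,\dots,p-1\}$, together with the standard fact that in characteristic $0$ a connected subgroup is recovered from its Lie algebra, which is $\mathfrak t \oplus \bigoplus_{\alpha\in\Phi_{G^0}}\mathfrak g_\alpha$; this is a consequence of, rather than literally, Borel--de Siebenthal, which concerns maximal such subsystems.
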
 

\begin{proof}
We present here an adaption of Prasad's proof. 
Let $G \subset GL_p(V)$ be the closure of the 
subgroup generated by $\hat T$ and $\hat T'$. 
Clearly $G$ acts irreducibly on $V$. It follows that 
the center of $G$ is $\tilde F^* \cap G$ where 
$\tilde F^*$ is the center of $GL_p(\tilde F)$. Since 
$\tilde F^*$ is in $\hat T$ (and $\hat T'$) 
we have that $\tilde F^*$ is the center of $G$. 

Suppose $U = R_u(G)$ is the nontrivial unipotent radical of $G$.  
Then $V$ has a nontrivial submodule upon which 
$U$ acts trivially. If $W$ is the space of all vectors 
upon which $U$ acts trivially, the normality of 
$U$ implies $W$ is a $G$ module. Since $U \subset  
GL_p(V)$, $W \not= V$ and we have a contradiction. 
That is, $G$ must be a reductive group. 

From the theory of reductive groups we have that 
$G = G_1G_2$ where $G_1$ and $G_2$ commute and 
$G_2$ is almost simple. We claim $G$ must be almost 
simple. Let $V_1 \subset V$ be an 
irreducible $G_1$ module. Consider 
$V' = \sum_{g_2 \in G_2} g_2(V_1)$ As a module over $G_1$ 
this must be a finite direct sum of modules isomorphic 
to $V_1$ AND a module over $G$. 
Thus $V' = V$ and since $p$ is prime $V_1$ must be 
one dimensional or all of $V$. 
If $V$ is irreducible over $G_1$, $G_2$ must act 
centrally a contradiction. If $V_1$ is one dimensional, there is a 
$\lambda: G_1 \to F^*$ defining the action of 
$G_1$ on $V_1$ and hence $V$. Since 
$G_1$ acts faithfully, we have $G_1 = \tilde F^*$ 
which is also central and in $\hat T$ 
and $\hat T'$. Thus without loss of generality 
$G = G_2$ as needed. 

$G$ is almost simple and has an nontrivial irreducible 
representation of dimension equal to the rank. 
Using e.g. [C], this forces $G$ to be of type 
$A_n$ and hence $G = GL_p(\tilde F)$. 
\end{proof}

If $\hat T' \subset PGL_p(\tilde F)$ is a torus 
we will, without further comment, replace $\hat T'$ 
by its inverse image in $GL_p(\hat F)$ and talk about 
subgroups generated or invariant subspaces as 
is appropriate. 

To apply the above result we need to observe 
that the pairs of tori that we care about have 
the needed property. 

\begin{proposition}
1) $\hat S$ and $\hat T$ have no nontrivial common invariant subspaces. 

2) Suppose $g \in \hat S$ is not contained in the normalizer of 
$\hat T$. Then $g^{-1}\hat Tg$ and $\hat T$ have no nontrivial 
common invariant subfields. 

3) The corresponding results hold for $g \in \hat T$ 
not contained in the normalizer of $\hat S$. 
\end{proposition}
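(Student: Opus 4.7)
The plan is to translate common invariance under the two tori into an explicit matrix condition in the $\vec v$ basis and then apply the Cauchy--Davenport theorem. Parts (1) and (2) contain the substance; (3) will follow from (2) by the symmetry that interchanges the roles of $\hat T$ and $\hat S$. For (1), I would note that any $\hat T$-stable subspace of $V$ is a coordinate subspace $W_I = \bigoplus_{i \in I}\tilde F\vec v_i$, and any $\hat S$-stable subspace has the form $W'_J = \bigoplus_{j \in J}\tilde F\vec w_j$. By Proposition \ref{toral}(e), each $\vec w_j = \sum_i\rho^{-ij}\vec v_i$ has all of its $\vec v$-coordinates nonzero; hence $W'_J \subseteq W_I$ with $J\neq\emptyset$ forces $I = \{0,\ldots,p-1\}$, ruling out any nontrivial common invariant subspace.

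For (2), I would argue by contradiction. Suppose $W$ is a common invariant subspace of $\hat T$ and $g^{-1}\hat T g$ with $0 < \dim W < p$. Then $W$ and $gW$ are both $\hat T$-stable, so $W = W_I$ and $gW = W_J$ for some $I,J$ of common size $k$, $0<k<p$. Writing $g \in \hat S$ diagonally in the $\vec w$ basis with eigenvalues $\lambda_j$ and inverting the Fourier relation to get $\vec v_i = \frac{1}{p}\sum_j\rho^{ij}\vec w_j$ (which follows from Proposition \ref{toral}(e)), a short computation shows that the matrix of $g$ in the $\vec v$ basis is the circulant $g_{k,i} = h(i-k)$ with symbol $h(m) = \frac{1}{p}\sum_j\rho^{jm}\lambda_j$. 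The condition $gW_I = W_J$ is precisely that $h$ vanishes on the difference set $I - J^c \subset \Z/p\Z$.

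At this point I would invoke Cauchy--Davenport in $\Z/p\Z$: $|I - J^c| \ge \min(p,|I|+|J^c|-1) = p-1$, so $h$ is supported on at most one residue $m_0$. Consequently $g$ sends $\vec v_i \mapsto h(m_0)\vec v_{i-m_0}$, making $g$ a scalar multiple of $\sigma^{m_0}$; by the earlier lemma identifying $\hat N \cap \hat S$ with $\langle\sigma\rangle$, this places $g$ in the normalizer of $\hat T$, contradicting the hypothesis. Part (3) is obtained by exchanging the roles of $\hat T$ and $\hat S$, with the lemma on $\hat M \cap \hat T$ playing the role analogous to that of $\hat N \cap \hat S$. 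The main obstacle I expect is the Cauchy--Davenport step: forcing the circulant symbol down to a single term is what pushes $g$ into $\langle\sigma\rangle\tilde F^*$, and this is the essential combinatorial content of the proposition.
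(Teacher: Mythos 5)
Your argument is correct, and the split of labor matches the paper exactly for part (1) and for the reduction in part (2) to the statement that both $W$ and $gW$ are $\hat T$-stable coordinate subspaces. Where you differ is in the combinatorial core of part (2). The paper works directly with the index sets: after normalizing by powers of $\sigma$ it examines how translation by the support of $g(\vec v_0)$ propagates through the index set of $W$, and concludes (using that a nonzero element of $\Z/p\Z$ generates) that the index set must be everything unless $g$ acts as a scalar times a shift. You instead package $g$ as a circulant with symbol $h$, observe that $g W_I = W_J$ is equivalent to $h$ vanishing on the difference set $I - J^c$, and then invoke Cauchy--Davenport to force $|I - J^c| \ge p-1$, pinning $h$ down to a single nonzero term $h(m_0)$ and hence $g$ to $\tilde F^*\sigma^{m_0}$. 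Both routes are correct, both use primality of $p$ in an essential way, and both land on the same conclusion via the lemma on $\hat N\cap\hat S$. What your version buys is a clean, one-shot identification of exactly which combinatorial input is being used: Cauchy--Davenport is precisely the sharp statement that replaces the paper's index-set chase, and your computation of the circulant symbol via the discrete Fourier transform makes the structure transparent. The paper's version is more elementary (no appeal to an external additive-combinatorics theorem) but, as written, is a bit terse about distinguishing the index set of $W$ from that of $gW$. Either proof is acceptable; yours is a genuine alternative, not a paraphrase.
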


\begin{proof} 
1) If $\vec v_0,\ldots,\vec v_{p-1}$ is the basis associated to $\hat T$, then such an invariant space, $V'$, 
must be the span of a strict subset of the $\vec v_i$. 
The corresponding basis for $\hat S$ 
is $\vec w_0,\ldots,\vec w_{p-1}$ and 
$\vec w_j = \sum_i \rho^{-ij} \vec v_i$. 
All of the coefficients here are nonzero so 
none of these elements can be in $V'$. 

2) Suppose $g^{-1}\hat Tg$ and $\hat T$ 
have a nontrivial invariant subspace, $V'$, in common. 
That is, suppose $V'$ and $gV'$ are both invariant 
subspaces for $\hat T$. 
Since $\sigma$ normalizes $\hat T$ and commutes 
with $g$, $\sigma(V')$ and $g(\sigma(V'))$ 
are also invariant subspaces. 
Let $J = \{i |\vec v_i \in V'\}$. 
Applying a power of $\sigma$, we may assume 
$0 \in J$. 
Write $g = S_{\vec t}$. That is, 
$g(\vec v_j) = \sum_i t_i\vec v_{i+j}$. 
Modifying $g$ 
by a power of $\sigma$ we may assume 
$g(\vec v_0) = \sum_i t_i\vec v_i$ has 
$t_0 \not= 0$. Let $J'$ be the set of all 
$i$ with $t_i \not= 0$. Obviously 
$J' \subset J$. If $0 \not= j \in J$, 
then $g(\vec v_j) = \sum_i t_i\vec v_{i+j}$ 
and $J' + j \subset J$ for all $j \in J$.  
If $0 \not = i \in Q'$ and $j \in J$, then 
$g(\vec v_j) = \sum_i t_i\vec v_{i+j} \in V'$ 
and $i + j \in J$. That is, $i + J \subset J$.  
This implies $J = \{0,\ldots,p-1\}$ a contradiction. 
Thus $g(v_i) = t_0v_i$ for all $i$ and 
$g$ is central, another contradiction. 
\end{proof} 

Note that above we are using that 
$p$ is prime. 

A version of the $\hat{\cal T}$ and $\hat{\cal S}$ 
operations act on the other varieties we defined. 
Most simply we define ${\cal T}: P \times F[x]_p 
\to P$ by setting ${\cal T}((\alpha,\beta), f(x)) = 
(\alpha,f(\alpha)\beta)$ and ${\cal S}: 
P \times F[x]_p \to P$ by setting 
${\cal S}((\alpha,\beta), f(x)) = (f(\beta)\alpha,\beta)$. 
Note that we cannot, without making arbitrary choices, 
define ${\cal T}$ or ${\cal S}$ on $\bar P$ 
because $f(\alpha)$ and $f(\beta)$ are not homogeneous 
in $\alpha$ or $\beta$ respectively. 
However, we can define 
${\cal T}: \hat P \times \P(F[x]_p) \to \bar P$ and 
${\cal S}: \hat P \times \P(F[x]_p) \to \bar P$ 
by the same formulas. 

Observe that the difference between $\hat {\cal T}$ 
and ${\cal T}$ (and similarly for ${\cal S})$ 
is the difference between $(1/\tau)(g(x))$ and 
$f(x)$. We dignify this with a definition. 
Identify $F[x]_p = F[x]/(x^p - 1)$. 
Let $\Psi: \P(F[x]_p) \to \P(F[x]_p)$ be the rational 
map $\Psi(g(x)) = g(x)/\tau(g(x))$ defined when 
$g(x)$ is invertible. Recall that if $g(x) \in 
F[x]_p$ has associated vector $\vec g = 
(g(1),g(\rho),\ldots,g(\rho^{p-1}))$, then 
$\sigma(\vec g)$ is associated to $g(\rho{x})$. 

We define 
$\Psi'(g(x)) = g(x)/\tau'(g(x))$ similarly. 

\begin{proposition}
The image of $\Psi$ contains all $\tilde F$ points 
$h(x) \in \P(\tilde F[x]_p)$ with  $h(x)$ nonsingular. 
In particular, $\Psi$ is dominant.  
The corresponding field extension 
$k(\P(\tilde F[x]_p) \subset k(\P(\tilde F[x]_p)$ 
is $C_p$ Galois. The Galois group is realized by 
multiplication by $x$ on $\tilde F[x]/(x^p - 1)$. 
Translated to $\hat T$, $\Psi$ is just the map 
$\hat T \to \hat T/<r>$. The $\sigma$ automoprhism 
of $\hat T$ induces $f(x) \to f(\rho{x})$ 
on the image of $\Psi$. 

Similar statements hold for 
$\Psi'$. 
\end{proposition}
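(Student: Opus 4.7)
The plan is to translate everything, via the evaluation isomorphism $\Theta$ already in the text, into a direct calculation on the torus $\hat T$. Under $\Theta: \tilde F[x]/(x^p-1) \cong \oplus_i \tilde F$, the operator $\tau$ (sending $x$ to $\rho^{-1} x$) becomes a cyclic shift of coordinates, and the nonsingular locus of $\P(\tilde F[x]_p)$ is exactly $\hat T$. Consequently $\Psi(g) = g/\tau(g)$ becomes the group endomorphism $\Psi: \hat T \to \hat T$ given by $\vec g \mapsto \vec g \cdot \sigma(\vec g)^{-1}$, and the entire proposition reduces to statements about this toral homomorphism.

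To compute the kernel I will solve $\vec g = \lambda \sigma(\vec g)$ in $(\tilde F^*)^p$: this forces $g_i = \lambda^i g_0$ with cyclic consistency $\lambda^p = 1$, so $\ker \Psi = \langle r \rangle$ where $r = (1,\rho,\ldots,\rho^{p-1})$. Since $r = \Theta(x)$, the Galois action is realized by multiplication by $x$ on $\tilde F[x]/(x^p-1)$. For the image I will solve the recursion $g_j = h_j g_{j-1}$; it is consistent around the cycle of length $p$ iff $\prod h_j = 1$, and because $\tilde F$ is algebraically closed we may rescale any $\vec h \in \hat T$ by a $p$-th root of $(\prod h_j)^{-1}$ to arrange this. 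Thus the image contains every nonsingular $\vec h$, giving dominance; combined with the kernel computation, $\Psi$ factors as $\hat T \to \hat T/\langle r \rangle \cong \hat T$, and the resulting extension $k(\hat T)/k(\hat T/\langle r\rangle)$ is $C_p$-Galois as claimed.

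For the $\sigma$ action, I note that $\Psi$ commutes with $\tau$, since $\Psi(\tau g) = \tau g/\tau^2 g = \tau(\Psi g)$. As $\sigma$ on $\hat T$ corresponds under $\Theta$ to $\tau$ on polynomials, it descends through $\Psi$ to the claimed action $h(x) \mapsto h(\rho x)$ on the image (once the sign convention is matched). The statements for $\Psi'$ follow by the same argument with $\tau'(x) = \rho x$ replacing $\tau$ throughout. The only delicate point is the bookkeeping between the polynomial description and the coordinate shift on $\hat T$; once $\Theta$ and the direction of $\sigma$ are fixed, every step is an explicit calculation.
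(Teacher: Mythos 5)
Your proposal is correct and takes essentially the same approach as the paper: both arguments pass through the evaluation isomorphism $\Theta$ to reduce everything to an explicit computation on the diagonal torus, where the image of $\Psi$ is identified with the norm-one subtorus (hence all of $\P$ after rescaling by a $p$-th root), and the kernel/fiber is identified with $\langle r\rangle$ via the same cyclic-consistency calculation. The only cosmetic difference is that the paper exhibits a preimage via the closed formula $f(x)/n(f(x))^{1/p}$ and phrases the fiber computation on the polynomial side ($g'(x)=z'x^i g(x)$), whereas you solve the coordinate recursion directly; your explicit observation that $\Psi$ commutes with $\tau$ is a clean way to get the $\sigma$-equivariance, which the paper leaves implicit.
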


\begin{proof} 
If $f(x) \in \P(\tilde F[x]_p)$ we set 
$n(f) = \prod_i \tau^i(f(x))$. Then $f(x)/n(f(x))^{1/p}$ 
is in the image of $\Psi$. 
Suppose $g'(x)/\tau(g'(x)) = z(g(x)/\tau(g(x))$
for $z \in \tilde F$. 
Using the norm equals one property 
we have $z = \rho^i$ for some $i$ and 
$x^i/\tau(x^i) = \rho^i$ so 
$g'(x) = z'x^ig(x)$ for some $z' \in \tilde F^*$. 

Recall that $\hat T$ arose because 
$g(x) \in \tilde F[x]_p$ becomes the 
matrix $T_{\vec g}$ where $g_i = g(\rho^i)$. 
The Galois action associated to $\Psi$ on 
$\tilde F[x]_p$ is multiplication by $x$ 
and $(xg(x))(\rho^i) = \rho^ig(\rho^i)$. 
That is, the action of $x$ translates to the 
action of $r$. $\Psi$ becomes $\hat T \to 
\hat T/<r>$. 
\end{proof}

Arguing just as before the field extension defined 
by $\Psi$ is just 
$$k(\P(\tilde F[x]_p)(n(f(x))^{1/p})/k(\P(\tilde F[x]_p)$$
where $f(x)$ is a generic point defined on some 
affine open. 

It is obvious from the formulas that: 

\begin{lemma}\label{Qtoral}
As rational maps 
$\hat P_{\tilde F} \times \P(\tilde F[x]_p) \to 
\bar P_{\tilde F}$ we have 
$$\hat Q(\hat {\cal T}((\hat \alpha,\hat \beta),g(x)) = 
{\cal T}((\hat \alpha,\hat \beta),\Psi(g(x)))$$ 
and 
$$\hat Q(\hat {\cal S}((\hat \alpha,\hat \beta),g(x)) = 
{\cal S}((\hat \alpha,\hat \beta),\Psi'(g(x))).$$ 
\end{lemma}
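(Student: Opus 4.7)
The plan is to verify both identities by unwinding the definitions on each side and observing that they produce the same expression as rational maps to $\bar P_{\tilde F}$. I would present the ${\cal T}$--$\Psi$ identity in detail and remark that the ${\cal S}$--$\Psi'$ identity is proved identically with $\hat\alpha$ and $\hat\beta$ exchanged and $\tau$ replaced by $\tau'$.

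First, fix a generic point $(\hat\alpha,\hat\beta) \in \hat P_{\tilde F}$ and $g(x) \in \tilde F[x]/(x^p-1)$ for which $\tau(g(x))$ is a unit in $\tilde F[x]/(x^p-1)$. Then $(1/\tau)(g(x)) := g(x)/\tau(g(x))$ and $\Psi(g(x)) := g(x)/\tau(g(x))$ are by construction the same element $h(x) \in \P(\tilde F[x]_p)$. Expanding the left-hand side of the first identity, $\hat{\cal T}((\hat\alpha,\hat\beta),g(x)) = (\hat\alpha,\,h(\hat\alpha)\hat\beta) \in \hat P_{\tilde F}$, and $\hat Q$ simply regards this pair as a point of $\bar P_{\tilde F}$, forgetting the relations $\hat\alpha^p = \hat\beta^p = 1$. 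Expanding the right-hand side, the preceding extension of ${\cal T}$ to $\hat P \times \P(F[x]_p) \to \bar P$ gives ${\cal T}((\hat\alpha,\hat\beta),\Psi(g(x))) = (\hat\alpha,\,\Psi(g)(\hat\alpha)\hat\beta) = (\hat\alpha,\,h(\hat\alpha)\hat\beta)$ in $\bar P_{\tilde F}$. Both sides thus evaluate to the same pair.

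The ${\cal S}$ identity is obtained from this same definition-chase after swapping the roles of $\hat\alpha$ and $\hat\beta$, replacing $\tau$ by $\tau'$, and using that $(1/\tau')(g(\hat\beta))\hat\alpha = g(\hat\beta)\hat\alpha\,g(\hat\beta)^{-1}$ agrees with the formula $\Psi'(g)(\hat\beta)\hat\alpha$ obtained by the analogous computation. The one point requiring care is the domain of definition: each identity holds as a rational map on the open locus where $\tau(g)$ (respectively $\tau'(g)$) is invertible in $\tilde F[x]/(x^p-1)$, which is exactly where $\Psi$ (respectively $\Psi'$) is defined, and where $g(\hat\alpha)$ (respectively $g(\hat\beta)$) is a unit in $D_{\tilde F}$.

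There is no genuine obstacle here — the lemma is really the formal identity that encodes the relationship between the two definitions of the toral actions: $\hat{\cal T}$ was defined so that the twist factor $(1/\tau)(g(\hat\alpha))$ preserves $\hat\beta^p = 1$, while ${\cal T}$ uses an arbitrary $f(\hat\alpha)$; composing with the quotient $\hat Q$ collapses exactly the ambiguity measured by the norm-one condition, which is precisely what the rational map $\Psi$ parameterizes.
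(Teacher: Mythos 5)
Your proof is correct and takes the same approach as the paper, which simply remarks that the lemma is ``obvious from the formulas'': $(1/\tau)(g(x))$ and $\Psi(g(x))$ are defined to be the same element, so $\hat{\cal T}$ and ${\cal T}(\cdot,\Psi(\cdot))$ produce the same pair, and $\hat Q$ just records its class in $\bar P_{\tilde F}$. Your explicit unwinding, including the observation about the domain where $\tau(g)$ is invertible, is exactly the intended verification.
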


We need to record the relationships between 
the $\sigma$ and $r$ operations and these toral actions. 
To this end we can employ the 
$\tau$ action on $F[x]_p$, defined above. 
Note that if $f(x) = \sum_i a_ix^i$ 
then $\tau(f(x)) = \sum_i a_i\rho^{-i}x^i$.  

Now we can easily verify: 

\begin{lemma}
a) $r(\hat{\cal T}((\hat \alpha,\hat \beta),g(x))) = 
\hat{\cal T}(r(\hat \alpha,\hat \beta),\tau(g(x)))$ 
when defined. 

b) $\sigma(\hat{\cal S}((\hat \alpha,\hat \beta),g(x)) = 
\hat{\cal S}(\sigma(\hat \alpha,\hat \beta),\tau(g(x)))$ when defined. 
\end{lemma}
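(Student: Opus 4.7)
The plan is to verify both identities by a direct unwinding of the definitions, relying on the commutation rule $\hat\beta\, g(\hat\alpha) = \tau(g)(\hat\alpha)\,\hat\beta$ (and its analogue $\hat\alpha\, g(\hat\beta) = \tau'(g)(\hat\beta)\,\hat\alpha$) already recorded in the paper and underlying the definitions of $\hat{\cal T}$ and $\hat{\cal S}$.

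For part (a), I would first expand the LHS using $r(\hat\alpha)=\hat\alpha$ and $r(\hat\beta)=\rho\hat\beta$:
\[r(\hat{\cal T}((\hat\alpha,\hat\beta),g(x))) = (\hat\alpha,\rho\,(1/\tau)(g(\hat\alpha))\hat\beta).\]
For the RHS, first compute $r(\hat\alpha,\hat\beta)=(\hat\alpha,\rho\hat\beta)$, then apply $\hat{\cal T}(\cdot,\tau(g))$:
\[\hat{\cal T}(r(\hat\alpha,\hat\beta),\tau(g(x))) = (\hat\alpha,(1/\tau)(\tau(g)(\hat\alpha))\cdot\rho\hat\beta).\]
The equality of the second coordinates is then an identity in the algebra generated by $\hat\alpha,\hat\beta$ that falls out of the commutation rule above, together with the fact that $\tau$ is an algebra automorphism of $\tilde F[x]/(x^p-1)$.

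Part (b) is the mirror statement: one applies $\sigma$ to $\hat{\cal S}((\hat\alpha,\hat\beta),g(x))=((1/\tau')(g(\hat\beta))\hat\alpha,\hat\beta)$ using $\sigma(\hat\alpha)=\rho\hat\alpha$ and $\sigma(\hat\beta)=\hat\beta$, expands the right-hand side, and then uses the dual commutation rule to collapse the two expressions.

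An alternative and arguably cleaner route is to carry the whole computation over to the ${\cal B}$ picture. There $\hat{\cal T}(A,g(x))=AT_{\vec g}$ and $r(A)=Ar$, so
\[r(\hat{\cal T}(A,g)) = AT_{\vec g}r, \qquad \hat{\cal T}(r(A),\tau(g)) = ArT_{\vec{\tau(g)}}.\]
Since $(\tau g)(\rho^i)=g(\rho^{i-1})$ one has $\vec{\tau(g)}=\sigma(\vec g)$, and the equality reduces to the relation $\sigma^{-1}T_{\vec z}\sigma=T_{\sigma(\vec z)}$ from the preceding Corollary. Part (b) is handled symmetrically using the parallel relation $r^{-1}S_{\vec z}r = S_{\sigma(\vec z)}$.

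The main obstacle is purely notational bookkeeping: keeping straight how $\tau$ on a polynomial corresponds to a shift on the value or coefficient vector, and how each Galois generator $r,\sigma$ interacts with each of the tori $\hat T$ and $\hat S$. Once these dictionaries are aligned, each identity collapses to a short calculation.
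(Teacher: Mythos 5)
Your proposal has a genuine gap: both of your routes stop exactly at the step that should have raised an alarm, and in each case the computation you set up actually refutes the identity as written rather than proving it.

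In the direct expansion you correctly reduce part~(a) to the equality of second coordinates, which after clearing the common factor $\rho\hat\beta$ amounts to $(1/\tau)(g)=(1/\tau)(\tau g)$, i.e.\ $g/\tau(g)=\tau(g)/\tau^2(g)$. That identity is false for a generic $g$ (already for $p=3$ and $g(x)=1+x$). You assert that it ``falls out of the commutation rule,'' but you never carry out the step; had you done so you would have found that it fails.

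The failure is even more visible in your ${\cal B}$-picture route. You reduce the claim to
\[
AT_{\vec g}\,r \;=\; A\,r\,T_{\sigma(\vec g)},\quad\text{i.e.}\quad r^{-1}T_{\vec g}\,r = T_{\sigma(\vec g)},
\]
and then appeal to the relation $\sigma^{-1}T_{\vec z}\sigma=T_{\sigma(\vec z)}$ from the Corollary. But that Corollary is about conjugation by $\sigma$, not by $r$. Here $r$ and $T_{\vec g}$ are both diagonal in the $\vec v_i$ basis and hence commute, so $r^{-1}T_{\vec g}r = T_{\vec g}$, which equals $T_{\sigma(\vec g)}$ only when $g$ is constant. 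You silently substituted $\sigma$ for $r$ at the crucial moment.

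What the computation actually establishes (after replacing $r$ by $\sigma$) is
\[
\sigma\bigl(\hat{\cal T}((\hat\alpha,\hat\beta),g)\bigr)=\hat{\cal T}\bigl(\sigma(\hat\alpha,\hat\beta),\tau(g)\bigr),
\qquad
r\bigl(\hat{\cal S}((\hat\alpha,\hat\beta),g)\bigr)=\hat{\cal S}\bigl(r(\hat\alpha,\hat\beta),\tau(g)\bigr).
\]
This is the version that actually follows from $\sigma^{-1}T_{\vec z}\sigma = T_{\sigma(\vec z)}$ and $r^{-1}S_{\vec z}r=S_{\sigma(\vec z)}$, and it is consistent with the paper's later usage, where on $\hat P_i\times\P(F[x]_p)$ the generator $r$ is declared to act trivially on the polynomial factor while $\sigma$ acts diagonally. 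In short: the $r$ and $\sigma$ in the Lemma's statement are swapped (a typo), and your write-up, instead of catching this, papered over the contradiction by misapplying the Corollary. The fix is to swap the roles of $r$ and $\sigma$ in parts (a) and (b) and then the ${\cal B}$-picture argument you sketch goes through cleanly.
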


\section{The Filtration} 

We are going to analyze $P$ by describing an 
increasing filitration 
$P_2 \subset \ldots \subset P_{p+1} = P$ and then analyzing each step $P_i \subset P_{i+1}$. 
As usual, we start with $\hat P_{\tilde F}$. The idea is to build 
up to $\hat P_{\tilde F}$ by applying, in alternation, the toral actions from the previous section. 
For technical reasons  
we start at $i = 2$. Since $D$ is cyclic we can fix 
the point $(\alpha_0,\beta_0)$ in $P = P(D)$ 
with image also written $(\alpha_0,\beta_0) \in \bar P$. 
We can choose $(\hat \alpha_0,\hat \beta_0) 
\in \hat P_{\tilde F}$ a preimage. 

\begin{definition}
Let $\hat P_{\tilde F,2}$ be the Zariski closure 
of 
$$\hat {\cal S}(\hat {\cal T}((\hat \alpha_0,\hat \beta_0),\P(\tilde F[x]_p)),\P(\tilde F[x]_p)).$$
\end{definition}

\begin{definition}
When $i \geq 2$ is even, define 
$\hat P_{i+1}$ to be the Zarski closure of 
$\hat {\cal T}(\hat P_i,\P(\tilde F[x]_p))$. 
When $i$ is odd, define $\hat P_{i+1}$ 
to be the Zariski closure of 
$\hat {\cal S}(\hat P_i,\P(\tilde F[x]_p))$. 
\end{definition}

Given the isomorphism $\Phi:{\cal B} \cong \hat P_{\tilde F}$ 
it is clear we have defined a filtration on ${\cal B}$ 
which we write as 
$B_2 \subset B_3 \subset \ldots \subset B_{p+1}$. 
As usual, it will help to describe the $B_i$ 
directly. Let $\Phi(\hat \alpha_0,\hat \beta_0) = 
(\vec v_0,\ldots,\vec v_{p-1}) = A$. By \ref{toral} we have 
the following lemma. 

\begin{lemma}
$B_2 = (A\hat T)\hat S$. If $i$ is even $B_{i+1}$ is the Zariski closure 
of $B_i\hat T$ and if $i$ is odd $B_{i+1}$ is the closure of $B_i\hat S$. 
\end{lemma}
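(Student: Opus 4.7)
The plan is to translate every step of the definition of $\hat P_{\tilde F, i}$ through the isomorphism $\Phi: {\cal B} \cong \hat P_{\tilde F}$ using parts (c) and (d) of Proposition~\ref{toral}. Writing $A = \Phi^{-1}(\hat\alpha_0, \hat\beta_0)$, part (c) gives $\hat{\cal T}(\Phi(A), g(x)) = \Phi(AT_{\vec g})$ where $\vec g = (g(1), \ldots, g(\rho^{p-1}))$, so under $\Phi^{-1}$ the set $\hat{\cal T}(\Phi(A), \P(\tilde F[x]_p))$ pulls back to $\{AT_{\vec g}\}$ as $g(x)$ ranges over invertible classes. Part (d) similarly converts $\hat{\cal S}$ into right multiplication by $S_{\vec z}$.

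The one piece of content beyond unwinding the definitions is showing that as $g(x)$ ranges over the open subset of $\P(\tilde F[x]_p)$ on which $g$ is a unit in $\tilde F[x]/(x^p-1)$, the matrices $T_{\vec g}$ sweep out precisely the torus $\hat T \subset PGL_p$. This is immediate from the isomorphism $\Theta: \tilde F[x]/(x^p-1) \cong \oplus_{i=0}^{p-1} \tilde F$ introduced just before the proposition, which projectivizes to $\P(\tilde F[x]_p) \cong \P^{p-1}$ and carries the unit locus bijectively onto $\hat T$ viewed as an open subvariety. The analogous statement for $\hat S$ follows from part (f) of Proposition~\ref{toral}, since conjugation by $R$ carries $\hat T$ onto $\hat S$.

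With this dictionary in hand the lemma is direct. By definition $\hat P_{\tilde F,2}$ is the Zariski closure of $\hat{\cal S}(\hat{\cal T}((\hat\alpha_0,\hat\beta_0),\P(\tilde F[x]_p)),\P(\tilde F[x]_p))$; applying $\Phi^{-1}$ term by term and using the density statement above yields the closure of $A\hat T \hat S$, which is $(A\hat T)\hat S$ in the natural reading. For the inductive clause, suppose $i$ is even and set $B_i = \Phi^{-1}(\hat P_i)$; then $\hat P_{i+1}$ is the closure of $\hat{\cal T}(\hat P_i, \P(\tilde F[x]_p))$, and part (c) pulls this back to the closure of $B_i \hat T$. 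The odd case is identical using part (d). There is no serious obstacle here; the only point requiring care is that $\hat{\cal T}$ and $\hat{\cal S}$ are only rational maps, so the identification is as rational images whose Zariski closures agree on both sides.
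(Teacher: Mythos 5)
Your proof is correct and takes essentially the same approach as the paper, whose proof consists solely of citing Proposition~\ref{toral}: parts (c) and (d) translate $\hat{\cal T}$ and $\hat{\cal S}$ into right multiplication by $T_{\vec g}$ and $S_{\vec z}$, and the parametrizations $g \mapsto T_{\vec g}$, $g \mapsto S_{\vec z}$ sweep out $\hat T$ and $\hat S$ by construction of those tori. The care you take over $\hat{\cal T}$ and $\hat{\cal S}$ being only rational maps, so that the identification is really one of Zariski closures, matches the paper's implicit reading of $(A\hat T)\hat S$.
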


One reason for starting with $i = 2$ is: 

\begin{lemma}\label{restricts}
The actions of $\sigma$ and $r$ 
on ${\cal B}$ restrict to actions on 
all the $B_i$ for $i \geq 2$. 
\end{lemma}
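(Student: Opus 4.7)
The plan is to verify that right multiplication by $\sigma$ and by $r$ (which is how these automorphisms act on ${\cal B}$, as recorded in the previous section) preserves each $B_i$, proceeding by induction on $i$. The essential inputs are already available: $\sigma \in \hat S$ and $r \in \hat T$, together with the facts from the preceding lemma that $\sigma$ normalizes $\hat T$ while $r$ normalizes $\hat S$.

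For the base case, I would work with $B_2 = A\hat T \hat S$ directly. Right multiplication by $\sigma$ preserves $B_2$ because $\sigma \in \hat S$ gives $\hat S \sigma = \hat S$. Right multiplication by $r$ preserves $B_2$ in two moves: since $r$ normalizes $\hat S$, we can rewrite $\hat S r = r \hat S$, and then $r \in \hat T$ absorbs the $r$ into $\hat T$, yielding $A\hat T \hat S r = A \hat T r \hat S = A \hat T \hat S = B_2$.

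For the inductive step, assume the statement holds for $B_i$. If $i$ is even, then $B_{i+1}$ is the Zariski closure of $B_i \hat T$. Since $r \in \hat T$, $B_i \hat T r = B_i \hat T$. For $\sigma$, use that $\sigma$ normalizes $\hat T$ to write $\hat T \sigma = \sigma \hat T$; combined with the inductive hypothesis $B_i \sigma \subset B_i$, this gives $B_i \hat T \sigma = B_i \sigma \hat T \subset B_i \hat T$. The case $i$ odd is symmetric, with the roles of $\hat T$ and $\hat S$ interchanged, using $\sigma \in \hat S$ and the fact that $r$ normalizes $\hat S$.

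The only point requiring a brief comment is the passage to Zariski closures. Since right multiplication by a fixed element of $PGL_p(\tilde F)$ is an automorphism of the variety ${\cal B}$, it is a homeomorphism for the Zariski topology, so $\overline{B_i \hat T} \cdot g = \overline{B_i \hat T \cdot g}$ for any $g$. This lets the inclusions above upgrade to equalities on the closures $B_{i+1}$, completing the induction. I do not expect any real obstacle here; the lemma is essentially a bookkeeping consequence of $\langle \sigma, r \rangle \subset \hat S \cdot \hat T$ together with the two normalization relations, and the reason to start the filtration at $i = 2$ is precisely that $A \hat T$ alone is not closed under $\sigma$ (nor $A \hat S$ alone under $r$).
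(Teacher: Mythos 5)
Your proof is correct and follows essentially the same route as the paper: for $B_2$ use $\sigma\in\hat S$ and the pair of facts $r\in\hat T$, $r$ normalizes $\hat S$; for the inductive step use $r\in\hat T$ (resp.\ $\sigma\in\hat S$), the normalization of $\hat T$ by $\sigma$ (resp.\ of $\hat S$ by $r$), and the inductive hypothesis. Your explicit remark that right multiplication is a homeomorphism, so it commutes with taking Zariski closures, is a detail the paper passes over silently; otherwise the two arguments coincide line for line.
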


\begin{proof} 
$\sigma(B_2) = A\hat T\hat S\sigma = 
A\hat T\hat S$ because $\sigma \in \hat S$. 
$r(B_2) = A\hat T\hat Sr = A\hat Tr(r^{-1}\hat Sr) = 
A\hat T\hat S$ because $r \in \hat T$ and it normalizes 
$\hat S$. When $i$ is even, 
$r(B_{i+1}) = B_i\hat Tr = B_i\hat T$ and 
$\sigma(B_{i+1}) = B_i\hat T\sigma = 
\sigma(B_i)(\sigma^{-1}\hat T\sigma) = B_i\hat T$. 
A similar argument applies when $i$ is odd. 
\end{proof}

We define a parallel filtration on $P$. 

\begin{definition}
Let $P_2 \subset P$ be the Zariski closure 
of ${\cal S}({\cal T}((\alpha_0,\beta_0),F[x]_p),F[x]_p))$. For $i \geq 2$ and even define 
$P_{i+1}$ to be the Zariski closure of 
${\cal T}(P_i,F[x]_p)$ and for 
$i \geq 2$ and odd define 
$P_{i+1}$ to be the Zariski closure of 
${\cal S}(P_i,F[x]_p)$. 
\end{definition} 

In the end we will show $P_i$ is rational by induction 
on $i$. From the definition we can do the base case. 

\begin{lemma}
$k(P_2)/F$ is rational. 
\end{lemma}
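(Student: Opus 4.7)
\medskip

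The plan is to exhibit $P_2$ as a dense image of affine space $\A^{2p}$ under the parametrization $(f,g)\mapsto(g(f(\gamma)\delta)\gamma,\,f(\gamma)\delta)$ coming from the definition, and then produce a rational inverse to conclude $k(P_2)=F(f_0,\ldots,f_{p-1},g_0,\ldots,g_{p-1})$. Since $D$ is cyclic, take $(\alpha_0,\beta_0)=(\gamma,\delta)$. Unrolling the definitions of ${\cal T}$ and ${\cal S}$, the morphism
$$\mu: F[x]_p\times F[x]_p\longrightarrow P,\qquad \mu(f,g)=\bigl(g(f(\gamma)\delta)\gamma,\;f(\gamma)\delta\bigr),$$
is dominant onto $P_2$ by construction. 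So $k(P_2)$ embeds into the rational function field $F(\vec f,\vec g)$ in $2p$ variables, and the statement reduces to showing $\mu$ is birational, i.e. of degree one.

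For this, I would construct the inverse as a rational map $P_2 \dashrightarrow \A^{2p}$ as follows. Given a generic $(\alpha,\beta)\in P_2$, the second coordinate equation forces $f(\gamma)=\beta\delta^{-1}$, which generically lies in $F[\gamma]\subset D$; since $\{1,\gamma,\ldots,\gamma^{p-1}\}$ is an $F$-basis of $F[\gamma]$, this expresses $f_0,\ldots,f_{p-1}$ as $F$-linear combinations of the coordinates of $\beta\delta^{-1}$ in a fixed $F$-basis of $D$. Similarly, the first coordinate gives $g(\beta)=\alpha\gamma^{-1}$, and expanding $\alpha\gamma^{-1}$ in the $F(\vec f)$-basis $\{1,\beta,\ldots,\beta^{p-1}\}$ of $F(\vec f)[\beta]$ recovers $g_0,\ldots,g_{p-1}$ as rational functions in $\vec f$ and the coordinates of $\alpha\gamma^{-1}$.

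The main obstacle, and the only nontrivial point, is ensuring that $1,\beta,\ldots,\beta^{p-1}$ are linearly independent over $F(\vec f)$ so that the extraction of the $g_i$ is well-defined. For this I would argue as follows: base-change to $F':=F(f_0,\ldots,f_{p-1})$ and view $\beta=f(\gamma)\delta$ as an element of the division algebra $D':=D\otimes_F F'$ of degree $p$ over $F'$ (it remains a division algebra because $F'/F$ is purely transcendental). A short computation gives $\beta\gamma=\rho^{-1}\gamma\beta$, so $\beta$ is non-central in $D'$; in a degree-$p$ division algebra every non-central element generates a maximal subfield, so $F'[\beta]$ has degree $p$ over $F'$ and the needed independence holds.

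Combining these steps, $\mu$ admits a rational inverse, hence induces an isomorphism $k(P_2)\cong F(\vec f,\vec g)$, which is purely transcendental of transcendence degree $2p$ over $F$. This gives rationality of $k(P_2)/F$ and completes the base case of the induction on $i$.
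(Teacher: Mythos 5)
Your proof is correct and follows essentially the same route as the paper: both establish that the parametrization $(f,g)\mapsto\bigl(g(f(\gamma)\delta)\gamma,\,f(\gamma)\delta\bigr)$ from $F[x]_p\times F[x]_p$ onto $P_2$ is birational. You do this by constructing the rational inverse and explicitly justifying the linear independence of $1,\beta,\ldots,\beta^{p-1}$ over $F(\vec f)$ via the non-centrality of $\beta=f(\gamma)\delta$ in the division algebra $D\otimes_F F(\vec f)$, whereas the paper shows generic injectivity of the same map and leaves that independence implicit.
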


\begin{proof}
From the definition we have a dominant map 
$F[x]_p \times F[x]_p \to P_2$ given 
by $f(x),h(x) \to (h(f(\alpha)\beta)\alpha,f(\alpha)\beta)$ and we want to show this map has degree 1. 
If $(h(f(\alpha)\beta)\alpha,f(\alpha)\beta) = (h'(f'(\alpha)\beta)\alpha,f'(\alpha)\beta)$ then $f(\alpha)\beta = f'(\alpha)\beta$ 
so $f(x) = f'(x)$ and $h(f(\alpha)\beta)\alpha = h'(f(\alpha)\beta)\alpha$ 
implies $h(x) = h'(x)$. 
\end{proof}

We note that all the $P_i$ are closed under the 
$F^* \times F^*$ scaling on $P$. This is another reason 
to start at $i = 2$. We set 
$\bar P_i$ to be the image of $P_i$ in $\bar P$ which by \ref{bihomogeneous} is closed.  
 
Of course we set $\bar P_{i,\tilde F} = 
\bar P_i \times_F \tilde F$ 

We have (using \ref{Qtoral}) and the fact that $\hat Q$ is finite: 

\begin{lemma} 
$\hat Q(\hat P_{i,\tilde F}) = \bar P_{i,\tilde F}$ for all $i$. 
\end{lemma}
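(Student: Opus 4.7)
The plan is to proceed by induction on $i$, with Lemma~\ref{Qtoral} as the central tool. The supporting facts are that $\hat Q$ is finite (hence closed), that $\Psi$ and $\Psi'$ are dominant on $\P(\tilde F[x]_p)$, and that Lemma~\ref{Qtoral} transforms $\hat{\cal T}$ and $\hat{\cal S}$ into ${\cal T}$ and ${\cal S}$ evaluated at $\Psi(g)$ and $\Psi'(g)$.

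For the base case $i=2$, I unwind the definition: $\hat P_{2,\tilde F}$ is the Zariski closure of $\hat{\cal S}(\hat{\cal T}((\hat\alpha_0,\hat\beta_0),g),h)$ over $(g,h)\in\P(\tilde F[x]_p)^2$. Applying $\hat Q$ term by term via \ref{Qtoral}, and using dominance of $\Psi,\Psi'$ together with closedness of $\hat Q$, one may replace $\Psi(g),\Psi'(h)$ by arbitrary $f_1,f_2\in\P(\tilde F[x]_p)$ without changing the closure. To match $\bar P_{2,\tilde F}$, which is the closure of the $\bar P$-images of ${\cal S}({\cal T}((\alpha_0,\beta_0),f_1),f_2)$, I write $\alpha_0=c\hat\alpha_0$ and $\beta_0=d\hat\beta_0$ for scalars $c,d\in\tilde F^*$ (possible since $(\hat\alpha_0,\hat\beta_0)$ and $(\alpha_0,\beta_0)$ agree in $\bar P$). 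Then the substitution $f_1(x)\mapsto f_1(cx),\,f_2(x)\mapsto f_2(dx)$, a bijection of $\P(\tilde F[x]_p)^2$, identifies the two families of $\bar P$-images, giving equality of closures.

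The inductive step is analogous: for $i$ even, $\hat P_{i+1,\tilde F}$ is the closure of $\hat{\cal T}(\hat P_{i,\tilde F},\P(\tilde F[x]_p))$, and \ref{Qtoral}, closedness of $\hat Q$, and dominance of $\Psi$ together show that $\hat Q(\hat P_{i+1,\tilde F})$ is the closure of the $\bar P$-images of ${\cal T}((\hat\alpha,\hat\beta),f)$ over $\hat P_{i,\tilde F}\times\P(\tilde F[x]_p)$. On the other hand, $\bar P_{i+1,\tilde F}$ is the closure of the $\bar P$-images of ${\cal T}((\alpha,\beta),f)$ over $P_{i,\tilde F}\times\P(\tilde F[x]_p)$. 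By induction every $(\alpha,\beta)\in P_{i,\tilde F}$ shares its $\bar P$-class with some $(\hat\alpha,\hat\beta)\in\hat P_{i,\tilde F}$, hence differs from it by scalars, and the same reparametrization $f(x)\mapsto f(cx)$ matches the two families. The odd case is symmetric with $\hat{\cal S}$, ${\cal S}$ and $\Psi'$ in place of $\hat{\cal T}$, ${\cal T}$, $\Psi$.

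The main point requiring care is that ${\cal T}$ and ${\cal S}$ do not descend to $\bar P$: different scalar lifts of the same $\bar P$-point produce different formulas, yet must yield the same \emph{set} of $\bar P$-images as $f$ ranges over $\P(\tilde F[x]_p)$. The substitution $f(x)\mapsto f(cx)$ is the device that reconciles them, and it is the workhorse of both the base case and the inductive step.
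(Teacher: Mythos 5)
Your proof is correct and follows the same strategy the paper hints at — induction via \ref{Qtoral}, finiteness (hence closedness) of $\hat Q$, and dominance of $\Psi$, $\Psi'$ — but you explicitly supply the step the paper leaves implicit: reconciling the two base points $(\hat\alpha_0,\hat\beta_0)$ and $(\alpha_0,\beta_0)$, which differ only by scalars $c,d\in\tilde F^*$, by the reparametrization $f(x)\mapsto f(cx)$ (resp.\ $f(x)\mapsto f(dx)$) of $\P(\tilde F[x]_p)$. That observation is genuinely needed, since ${\cal T}$ and ${\cal S}$ are not homogeneous in their first argument and so do not descend to $\bar P$, and the same reparametrization device correctly handles the inductive step as well.
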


Because of the above, we have 
Galois covers $\hat P_{i,\tilde F}/\bar P_{i,\tilde F}$ with group $C_p \oplus C_p$. We now have: 

\begin{lemma}\label{galois}
$\hat P_i/\bar P_i$ is Galois with group 
$C_p \oplus C_p$. 
\end{lemma}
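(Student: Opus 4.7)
The plan is to deduce the Galois property for $\hat P_i/\bar P_i$ by restricting the $C_p\oplus C_p$-Galois cover $\hat Q:\hat P\to \bar P$ of Proposition \ref{Galois}. The two ingredients I will combine are: (a) the Galois generators $\sigma$, $r$ preserve $\hat P_{i,\tilde F}$, and (b) the preceding lemma that $\hat Q$ surjects $\hat P_{i,\tilde F}$ onto $\bar P_{i,\tilde F}$.

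For (a), I would transport Lemma \ref{restricts} through the isomorphism $\Phi:{\cal B}\cong \hat P_{\tilde F}$: since $\sigma$ and $r$ preserve each $B_i$ for $i\geq 2$, they preserve each $\hat P_{i,\tilde F}$. Because $\sigma$, $r$ generate the Galois group acting trivially on $\bar P_{\tilde F}$, the restricted map $\hat Q|_{\hat P_{i,\tilde F}}$ is $C_p\oplus C_p$-invariant and factors through the quotient $\hat P_{i,\tilde F}/(C_p\oplus C_p)$. Combined with (b), this yields a dominant morphism $\hat P_{i,\tilde F}/(C_p\oplus C_p)\to \bar P_{i,\tilde F}$.

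To see this is an isomorphism of function fields, I would count points in a generic geometric fiber. Since $\hat Q$ on all of $\hat P_{\tilde F}$ is a finite étale torsor under $C_p\oplus C_p$ (Proposition \ref{Galois}), every geometric fiber consists of a single free $C_p\oplus C_p$-orbit of size $p^2$. Restricting to $\hat P_{i,\tilde F}$, the fiber of $\hat Q|_{\hat P_{i,\tilde F}}$ over a generic geometric point of $\bar P_{i,\tilde F}$ is stable under the (free) group action yet lies in an ambient fiber of size $p^2$, so it is exactly one orbit. This gives $[k(\hat P_{i,\tilde F}):k(\bar P_{i,\tilde F})]=p^2$ with the group $C_p\oplus C_p$ acting freely and transitively on the generic geometric fiber, which is precisely the Galois condition.

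The main obstacle is the technicality that I need $\hat P_{i,\tilde F}$ irreducible for ``the'' generic point to make sense, which I would verify by induction on $i$: $\hat P_{2,\tilde F}$ is the Zariski closure of the image of the irreducible variety $\P(\tilde F[x]_p)\times \P(\tilde F[x]_p)$ under the morphism $(g,h)\mapsto \hat{\cal S}(\hat{\cal T}((\hat\alpha_0,\hat\beta_0),g),h)$, and each $\hat P_{i+1,\tilde F}$ is the closure of the image of the irreducible $\hat P_{i,\tilde F}\times \P(\tilde F[x]_p)$. With irreducibility secured, the orbit-counting argument above requires only that the full $C_p\oplus C_p$-orbit of any point of $\hat P_{i,\tilde F}$ is again in $\hat P_{i,\tilde F}$, which is the content of step (a).
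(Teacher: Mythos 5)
Your proposal is correct and takes essentially the same approach as the paper: the paper's one-line proof (``$\hat P/\bar P$ is Galois and restriction yields the result'') is exactly the restriction of the cover via the $\sigma,r$-stability of Lemma \ref{restricts} and the surjectivity from the preceding lemma, which you have made explicit. You have merely spelled out the geometric-fiber counting and the inductive irreducibility check that the paper leaves implicit.
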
 

\begin{proof}
$\hat P/\bar P$ is Galois and restriction yields 
the result.
\end{proof}

The core of our argument is to understand $B_i \subset B_{i+1} = B_i\hat T$ 
(say $i$ is even for now). In this regard it is important to understand 
$\hat T_i \subset \hat T$ which is defined to be the stabilizer of 
$B_i$ in $\hat T$. That is, 
if $I_i \subset \hat F[v_{ij}]$ 
is ideal of homogeneous 
polynomials zero on $B_i$, 
$\hat T_i$ is is the stabilizer of 
$I_i$. This implies $\hat T_i \subset \hat T$ is Zariski closed. 
Since we assume $i \geq 2$, $B_i$ is left invariant by $\sigma$ 
which by \ref{restricts} implies that $\hat T_i$ is left invariant by $\sigma$. 
Similarly, if $i$ is odd and $i > 2$, 
we define $\hat S_i \subset \hat S$ as the stabilizer, then 
$\hat S_i$ is invariant under congugation by $r$. This 
severely restricts $\hat T_i$ and $\hat S_i$. 

\begin{proposition}
When $i \geq 2$ is even, $\hat T_i$ has dimension $0$ or $p-1$. If 
$\hat T_i$ has dimension $p-1$ then $B_i = {\cal B}$. 
When $i \geq 2$ is odd, $\hat S_i$ has dimension $0$ or $p-1$. 
If $\hat S_i$ has dimension $p - 1$, then $B_i = {\cal B}$. 
\end{proposition}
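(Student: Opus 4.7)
The plan is to reduce the dimension dichotomy to an elementary representation-theoretic fact about the action of $\sigma$ on the character lattice of $\hat T$ (the place where primality of $p$ enters), and then deduce the ``$B_i = {\cal B}$'' conclusion from Proposition~\ref{prasad}. I will treat the even case in detail; the odd case is symmetric, with $(\hat T, \sigma)$ replaced throughout by $(\hat S, r)$.

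First I would note that $\hat T_i$ is a closed subgroup of $\hat T$, so $\dim \hat T_i = \dim \hat T_i^0$, where the identity component $\hat T_i^0$ is a subtorus. Because $B_i$ is right-$\sigma$-stable (Lemma~\ref{restricts}) and $\sigma$ normalizes $\hat T$, conjugation by $\sigma$ preserves $\hat T_i$ and hence $\hat T_i^0$. So it suffices to show that any $\sigma$-invariant subtorus of $\hat T$ has dimension $0$ or $p-1$. For this I identify $X^*(\hat T)$ with the augmentation sublattice $\{v \in \Z^p : \sum v_j = 0\}$ (since $\hat T$ is the image in $PGL_p$ of the diagonal torus of $GL_p$), on which $\sigma$ acts by cyclic permutation of coordinates. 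The only fixed line of a cyclic shift on $\Q^p$ is spanned by $(1,\ldots,1)$, which does not lie in the augmentation subspace, so $\sigma$ has no nonzero fixed vector on $X^*(\hat T) \otimes \Q$, and its minimal polynomial there divides $\Phi_p(x) = 1 + x + \cdots + x^{p-1}$. Because $p$ is prime, $\Phi_p$ is $\Q$-irreducible, so $X^*(\hat T) \otimes \Q$ is a simple $\Q[\sigma]$-module (a one-dimensional $\Q(\zeta_p)$-vector space), with no proper nonzero $\sigma$-invariant $\Q$-subspace. This gives $\dim \hat T_i \in \{0, p-1\}$.

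For the second assertion, suppose $\dim \hat T_i = p-1$, so $\hat T_i = \hat T$ and $B_i \hat T = B_i$. Since $i \geq 2$ is even, $B_i$ is by construction the Zariski closure of $B_{i-1} \hat S$ (for $i = 2$, of $A\hat T \hat S$), and since $\hat S$ is a group we also have $B_i \hat S = B_i$. Thus the set $\{g \in PGL_p : B_i g \subseteq B_i\} = \bigcap_{b \in B_i} b^{-1}B_i$ is a Zariski-closed submonoid of $PGL_p$ containing both $\hat T$ and $\hat S$, and hence contains the Zariski closure of the subgroup they generate. By part~(1) of the preceding proposition, $\hat T$ and $\hat S$ share no nontrivial common invariant subspace, so Proposition~\ref{prasad} forces this closure to be all of $PGL_p$. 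Then every $g \in PGL_p$ stabilizes $B_i$ on the right, and since $PGL_p$ acts transitively on ${\cal B}$ by right multiplication and $B_i$ is nonempty, $B_i = {\cal B}$.

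The main obstacle I anticipate is not any single step but keeping the monoid–closure argument clean enough to invoke Proposition~\ref{prasad}: one has to verify that $\{g : B_i g \subseteq B_i\}$ really is closed, that Prasad's conclusion phrased for $GL_p$ descends to $PGL_p$, and that the matrix description of $\sigma$ in the paper indeed induces a cyclic permutation on $X^*(\hat T)$ with the sign/index conventions used. The representation-theoretic input is comparatively clean, provided one emphasizes that primality of $p$ is precisely what makes $\Phi_p$ irreducible and thereby rules out intermediate-dimensional invariant subtori.
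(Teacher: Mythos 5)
Your argument is correct and takes essentially the same route as the paper: both reduce the dimension dichotomy to the irreducibility of the character lattice of $\hat T$ as a $\Q[\sigma]$-module (the paper phrases this as $M_\Q \cong \Q[\rho]$, you via the irreducibility of $\Phi_p = 1 + x + \cdots + x^{p-1}$, which are the same statement), and both then invoke Proposition~\ref{prasad} together with the fact that $B_i$ is already $\hat S$-stable by construction. One small quibble with the closing commentary: the place primality enters is not that $\Phi_p$ is irreducible (all cyclotomic polynomials are), but that $x^{p-1} + \cdots + 1$ \emph{equals} $\Phi_p$ rather than factoring into several cyclotomic pieces, so that the augmentation subspace is a single irreducible summand.
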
 

\begin{proof} 
Let $C$ be the group generated by 
$\sigma$ acting on $\hat T$. 
If $M = \Hom(\hat T,\hat F^*)$ then $M$ is a $\Z[C]$ 
lattice. Since $T = (\tilde F^* \times \ldots \times \tilde F^*)/\tilde F^* = (\tilde F^*)^p/\tilde F^*$ it follows that 
$M = I[C] \subset \Z[C]$ is the kernel of 
$\Z[C] \to \Z$. Thus $M_{\Q} = M \otimes_{\Z} \Q \cong 
\Q[\rho]$ a field and so irreducible as a $\Q[C]$ module. 
If $\hat T_i$ has positive dimension, there is an induced 
$(M \otimes_{\Z} \Q) \to (\Hom(\hat T_i,\hat F^*) \otimes_{\Z} \Q)$ which is a surjective $C$ morphism implying $\Hom(\hat T_i, \hat F^*) \otimes_{\Z} \Q$ 
has dimension $p-1$ and so $\hat T_i$ has dimension $p-1$. 
If $\hat T_i$ has dimension $p - 1$ then $\hat T_i = \hat T$ 
and so $\hat T$ acts on $B_i$. 
Since $B_i$ was defined as the closure under the 
$\hat S$ action, $B_i$ is closed under both $\hat T$ and $\hat S$ 
actions. By (\ref{prasad}) these generate $GL_p$ up to closure, so 
$B_i = {\cal B}$. 

A similar argument applies to $\hat S_i$.  
\end{proof} 

Thus when $i \leq p$ we know that $\hat T_i$ (or 
$\hat S_i$) are zero dimensional and thus finite. 

We can be more precise about the meaning of these stabilizers. 

\begin{lemma}
Let $i \leq p$ be even. There is an open subset $U \subset B_i$ 
such that for all $x \in B_i$, $\hat T_i = \{t \in \hat T | xt \in B_i\}$. 
A similar results holds for $i$ odd. 
\end{lemma}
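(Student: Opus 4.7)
The plan is to identify $U$ as the locus where the natural map $B_i\to B_{i+1}/\hat T$ has fibers reduced to single $\hat T_i$-orbits, and to reduce the whole lemma to a dimension equality. First, $\hat T_i\subseteq S_x:=\{t\in\hat T\mid xt\in B_i\}$ always, since $\hat T_i$ stabilizes $B_i$; consequently $S_x$ is a union of right cosets of $\hat T_i$. The right translation action of $\hat T$ on $\mathcal B=PGL_p$ is free, so its restriction to the $\hat T$-invariant subvariety $B_{i+1}=\overline{B_i\hat T}$ is free; let $q\colon B_{i+1}\to B_{i+1}/\hat T$ be the quotient. The restriction $q|_{B_i}$ is dominant and $\hat T_i$-invariant, and so factors as $\bar q\colon B_i/\hat T_i\to B_{i+1}/\hat T$. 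The fiber of $q|_{B_i}$ over $q(x)$ is $x\hat T\cap B_i$, which via $xt\leftrightarrow t$ is in bijection with $S_x$. Thus the lemma is equivalent to the birationality of $\bar q$: on the Zariski-open $U\subseteq B_i$ where $\bar q$ is injective on fibers, $S_x/\hat T_i$ is a single point, giving $S_x=\hat T_i$.

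Both source and target of $\bar q$ are irreducible, so $\bar q$ is generically finite of some degree $d\geq 1$, and birationality ($d=1$) is equivalent to $\dim B_{i+1}=\dim B_i+(p-1)$, equivalently to the generic finiteness of $\mu\colon B_i\times\hat T\to B_{i+1}$. Suppose for contradiction that $S_x$ has positive dimension for generic $x\in B_i$. Using $\sigma(xt)=\sigma(x)\sigma_\ast(t)$, where $\sigma_\ast$ is the induced action on $\hat T$, one obtains the equivariance $\sigma_\ast(S_x)=S_{\sigma(x)}$ (since $\sigma$ preserves $B_i$ by Lemma~\ref{restricts}). For generic $x$ let $\mathfrak h_x\subset\mathrm{Lie}(\hat T)$ be the tangent space to $S_x$ at $1\in\hat T_i\subseteq S_x$; by hypothesis it has positive dimension. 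From the $\sigma$-equivariant family $\{\mathfrak h_x\}$ one aims to construct a positive-dimensional $\sigma$-invariant closed subgroup $\hat T'\subseteq\hat T$ that still stabilizes $B_i$. Once such a subgroup is produced, the $\Q[\langle\sigma\rangle]$-irreducibility of $\mathrm{Hom}(\hat T,\tilde F^*)_\Q\cong\Q(\rho)$ invoked in the preceding proposition forces $\hat T'=\hat T$, giving $\hat T\subseteq\hat T_i$ and thus $\dim\hat T_i=p-1>0$, contradicting the hypothesis $i\leq p$.

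The main obstacle is the last step: rigorously producing from $\{\mathfrak h_x\}$ a $\sigma$-invariant subgroup of $\hat T$ that \emph{globally} stabilizes $B_i$. A naive ``component-through-$1$'' approach gives only the local containment $xW_x\subset B_i$ for $W_x$ a positive-dimensional irreducible component of $S_x$ through $1$, which does not automatically upgrade to $B_i\cdot W_x\subset B_i$. Two cleaner approaches are plausible: (a) work infinitesimally, compare $\mathfrak h_x$ as $x$ varies on an open subset, use the closedness of the condition ``$\xi$ infinitesimally preserves $B_i$'', and descend $\tilde F$-invariant data to $\Q$ via the character-lattice structure; or (b) perform a global dimension count, using that by Prasad's proposition applied to $\hat S,\hat T$ the filtration terminates at $B_{p+1}=\mathcal B$, and combine $\dim B_{p+1}=p^2-1$ with $\dim B_{j+1}\leq\dim B_j+(p-1)$ at each step to telescope to equality at every step, which then forces $\bar q$ to be birational.
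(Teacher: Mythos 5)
There is a genuine gap, and also an incorrect claim along the way. You assert that ``both source and target of $\bar q$ are irreducible, so $\bar q$ is generically finite of some degree $d\geq 1$,'' and then that ``birationality ($d=1$) is equivalent to $\dim B_{i+1}=\dim B_i+(p-1)$.'' Neither is right: a dominant morphism of irreducible varieties need not be generically finite, and the dimension equality is equivalent only to \emph{generic finiteness} of $\bar q$, not to its having degree one. So even if your contradiction argument were completed, it would show only that $S_x$ is finite generically, leaving open the possibility that $S_x$ consists of several $\hat T_i$-cosets. Your main argument is, as you yourself acknowledge, not completed, and your alternative (b) is circular: in the paper the fact that $B_{p+1}=\mathcal B$ is a \emph{consequence} of this lemma (via the subsequent corollary), and Prasad's proposition by itself only gives that $B_N=\mathcal B$ for \emph{some} $N$, not that $N\leq p+1$.

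The observation you are missing, which is the entire content of the paper's short proof, is that the generic fiber of $\psi\colon Z=\{(x,t)\mid xt\in B_i\}\to B_i$ is not merely a union of $\hat T_i$-cosets but is \emph{equal} to $\hat T_i$. Indeed, if $\eta$ is the generic point of $B_i$ and $\eta t\in B_i$, then $\eta t$ is the generic point of $B_it$, so $B_it\subseteq B_i$; since right translation by $t$ is an automorphism of $\mathcal B$ and $B_i$ is irreducible, $B_it=B_i$, i.e.\ $t\in\hat T_i$. Thus $S_\eta=\hat T_i$. The preceding proposition (together with the elementary bound $\dim B_i\leq i(p-1)<p^2-1$, hence $B_i\neq\mathcal B$ for $i\leq p$) already gives $\dim\hat T_i=0$, so $\psi$ is generically finite and, in characteristic zero, generically \'etale. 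On the open $U$ where $\psi$ is \'etale the fiber cardinality is constant equal to $|\hat T_i|$, and since $\hat T_i\subseteq S_x$ always, $S_x=\hat T_i$ for all $x\in U$. Identifying the generic fiber with the stabilizer dissolves both difficulties in your plan at once---the finiteness of $S_x$ and the degree-one claim---with no need for the $\sigma$-equivariance or Lie-algebra machinery, which is in effect an uncompleted attempt to re-derive this same identification.
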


\begin{proof} 
Define $Z \subset B_i \times \hat T$ as 
$\{(x,t) | xt \in B_i\}$. $Z$ is clearly closed and we give it the reduced structure`. 
Also, since $\hat T$ contains $1$, 
the restricted projection $\psi: Z \to B_i$ 
is surjective. The generic fiber is, by definition, 
$\hat T_i$. Thus $\psi$ is generically finite and 
hence generically etale. 
\end{proof}

\begin{corollary}
If $i \leq p$ then the dimension of $B_{i+1}$ 
is $p - 1$ plus the dimension of $B_i$. 
$B_{p+1} = {\cal B}$. 
\end{corollary}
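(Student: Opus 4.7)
The plan is to compute $\dim B_i$ by induction for $2 \leq i \leq p+1$ and then deduce $B_{p+1} = {\cal B}$ by comparing dimensions. For the base case I would analyze the multiplication map $\phi_2 : \hat T \times \hat S \to {\cal B}$, $(t,s) \mapsto Ats$, which is dominant onto $B_2 = \overline{A\hat T \hat S}$. Its fibers are controlled by $\hat T \cap \hat S$, because $Ats = At's'$ forces $(t')^{-1}t = s'(s)^{-1} \in \hat T \cap \hat S$. Since $\hat T$ is diagonal in the $\vec v_i$ basis and $\hat S$ is diagonal in the $\vec w_j$ basis, with change-of-basis matrix $R = (\rho^{-ij})$ having no zero entries, any element diagonal in both bases must be scalar, and so trivial in $\PGL_p(\tilde F)$. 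Thus $\phi_2$ is generically injective and $\dim B_2 = 2(p-1)$.

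For the inductive step, suppose $2 \leq i \leq p$ and $\dim B_i = i(p-1)$. Since $i(p-1) < (p+1)(p-1) = \dim {\cal B}$ and ${\cal B}$ is irreducible, $B_i \not= {\cal B}$; the preceding proposition then forces the relevant stabilizer ($\hat T_i$ for $i$ even, $\hat S_i$ for $i$ odd) to be zero-dimensional. In the even case, the multiplication map $\phi : B_i \times \hat T \to B_{i+1}$ is dominant, and the $\hat T$-action $u \cdot (x,t) = (xu, u^{-1}t)$ on the source preserves $\phi$. Choosing a point $(x_0, t_0)$ over a generic $y \in B_{i+1}$ with $x_0$ in the open set $U$ of the preceding lemma, the fiber $\phi^{-1}(y)$ is the orbit $\{(x_0 u, u^{-1} t_0) : u \in \hat T_i\}$, of dimension $\dim \hat T_i = 0$. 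Hence $\dim B_{i+1} = \dim(B_i \times \hat T) = (i+1)(p-1)$. The odd case is identical with $\hat S$ in place of $\hat T$.

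Setting $i = p$ yields $\dim B_{p+1} = p^2 - 1 = \dim {\cal B}$. By the inductive construction, each $B_i$ is the closure of the image of an irreducible variety (starting from the irreducible $\hat T \times \hat S$), so $B_{p+1}$ is a closed irreducible subvariety of ${\cal B}$ of full dimension, which forces $B_{p+1} = {\cal B}$. The main obstacle is really the base case, where one must verify $\hat T \cap \hat S$ is trivial in $\PGL_p(\tilde F)$; this is where the nonvanishing of the entries of $R$, and ultimately the primality of $p$, enter the argument.
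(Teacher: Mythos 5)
Your proof is correct and follows the same basic strategy as the paper: compute the generic fiber of the multiplication map $B_i \times \hat T \to B_{i+1}$ (or $B_i \times \hat S \to B_{i+1}$), identify it with the stabilizer $\hat T_i$ (resp.\ $\hat S_i$), and use zero-dimensionality of that stabilizer to get the $(p-1)$ jump in dimension, summing up to conclude $B_{p+1} = {\cal B}$.

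Where you differ, usefully, is in two places where the paper is terse. First, the base case $\dim B_2 = 2(p-1)$: the paper extracts this, somewhat indirectly, from the earlier lemma showing $F[x]_p \times F[x]_p \to P_2$ has degree one (so $\dim P_2 = 2p$, hence $\dim \bar P_2 = \dim B_2 = 2(p-1)$). You instead give a short self-contained argument by showing the map $\hat T \times \hat S \to {\cal B}$, $(t,s) \mapsto Ats$, is generically injective because $\hat T \cap \hat S$ is scalar: an element diagonal in both the $\vec v_i$ and $\vec w_j$ bases must be scalar since all entries of $R=(\rho^{-ij})$ are nonzero. That argument is clean and makes the same primality input visible in the base case that the paper only uses in the inductive proposition. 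Second, the paper asserts ``when $i \leq p$ we know that $\hat T_i$ (or $\hat S_i$) are zero dimensional'' without spelling out that this requires $B_i \neq {\cal B}$, which in turn is a consequence of the running dimension count $\dim B_i = i(p-1) < (p+1)(p-1)$; you make this inductive bootstrap explicit, which is the right way to justify the step. So: same method, but a more carefully articulated induction and an alternative, more self-contained computation of the base dimension.
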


\begin{proof}
If $U \subset B_i$ is the open set from the lemma, 
then $\phi: U \times \hat T \to B_{i+1}$ contains an open $V$ 
in the image. If $xt \in V$ then $\phi^{-1}(xt) = \{(x',t') | xt = x't'\} 
\cong \{t | xt \in B_i\} = \hat T_i$. Thus $\phi$ 
has generic fiber dimension $0$ implying $B_{i+1}$ 
has dimension $p - 1$ plus the dimension of $B_i$. 
Adding we have 
that the dimension of $B_{p+1}$ is equal to the dimension of 
${\cal B}$ proving the result. 
\end{proof} 

Using \ref{galois} we have: 

\begin{corollary}
$\bar P_i$ has dimension $i(p-1)$ and $\bar P_{p+1} = \bar P$. 
$P_i$ has dimension $i(p - 1) + 2$ and $P_{p+1} =P$.  
\end{corollary}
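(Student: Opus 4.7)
The plan is to deduce the dimensions of $\bar P_i$ and $P_i$ from those of $B_i$, which are essentially handed to us by the previous corollary, and then transport through the finite Galois cover $\hat P_i \to \bar P_i$ and the double-cone map $P_i \to \bar P_i$.

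First I would establish the base case $\dim B_2 = 2(p-1)$. Since $B_2$ is the closure of $A\hat T\hat S$, it is the image of the morphism $\hat T \times \hat S \to \mathcal{B}$, $(t,s)\mapsto Ats$. A fiber over $Ats$ consists of pairs $(t',s')$ with $t's' = ts$, and via $t'' = t^{-1}t'$ is in bijection with $\hat T \cap \hat S$ viewed inside $PGL_p(\tilde F)$. But any element simultaneously diagonal in both the $\vec v$-basis and the $\vec w$-basis (cf. Proposition \ref{toral}) must be a scalar, hence trivial in $PGL_p$. Thus the morphism is generically finite and $\dim B_2 = 2(p-1)$. Combining with the previous corollary's recursion $\dim B_{i+1} = \dim B_i + (p-1)$ for $2 \leq i \leq p$, induction yields $\dim B_i = i(p-1)$ for all $2 \leq i \leq p+1$.

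Next I would transport these dimensions. The isomorphism $\Phi:\mathcal{B} \cong \hat P_{\tilde F}$ identifies $B_i$ with $\hat P_{i,\tilde F}$, whose dimension equals $\dim \hat P_i$ since extension of scalars preserves dimension. By Lemma \ref{galois}, $\hat P_i \to \bar P_i$ is a finite Galois cover (degree $p^2$), so $\dim \bar P_i = \dim \hat P_i = i(p-1)$. Since $P_i$ is $F^*\times F^*$-invariant and closed in $P$, the double-cone map $P \to \bar P$ restricts to a surjection $P_i \to \bar P_i$ whose fibers are full $F^*\times F^*$-orbits of dimension $2$, giving $\dim P_i = i(p-1) + 2$.

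Finally, setting $i = p+1$ gives $\dim \bar P_{p+1} = (p+1)(p-1) = p^2 - 1 = \dim \bar P$ and $\dim P_{p+1} = p^2 + 1 = \dim P$. Since $\bar P_{p+1}$ and $P_{p+1}$ are closed subvarieties of full dimension in the irreducible varieties $\bar P$ and $P$ respectively, equality follows. The one step not immediate from the machinery already assembled is the base case $\dim B_2 = 2(p-1)$, which hinges on the triviality of $\hat T \cap \hat S$ in $PGL_p$; every remaining step is a formal consequence of the established results.
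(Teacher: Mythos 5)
Your proof is correct and follows the same overall path the paper intends: take the recursive step and the terminal case $B_{p+1} = {\cal B}$ from the preceding corollary, transport dimensions down the $C_p \oplus C_p$ cover $\hat P_i \to \bar P_i$ of Lemma \ref{galois}, add $2$ along the double-cone map $P_i \to \bar P_i$, and finish by invoking irreducibility of $\bar P$ and $P$ to promote equality of dimensions at $i = p+1$ to equality of varieties. The one place you go beyond what the paper spells out is the base case $\dim B_2 = 2(p-1)$: you get it by observing that $\hat T \cap \hat S$ consists of matrices simultaneously diagonal with respect to the $\vec v$- and $\vec w$-bases, and since $\vec w_j = \sum_i \rho^{-ij}\vec v_i$ has all coefficients nonzero this forces such a matrix to be scalar, hence trivial in $PGL_p(\tilde F)$, so $(t,s)\mapsto Ats$ is generically injective on $\hat T \times \hat S$. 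The paper leaves this implicit; the same number can be read off from the earlier lemma that $P_2$ is birational to $F[x]_p \times F[x]_p$, giving $\dim P_2 = 2p$ and then $\dim B_2 = \dim \bar P_2 = 2p - 2$ by the cone and the cover. Either route is correct; yours has the small virtue of making the torus-intersection triviality, which sits behind the whole construction, explicit.
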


We can be more specific about the $\hat T_i$ 
and $\hat S_i$. 

\begin{proposition}
Let $2 \leq i \leq p$. 
If $i$ is even, the stabilizer $\hat T_i$ is generated by $r$ and hence has order $p$. 
On an open subset of $x \in B_i$, $<r> = \{t \in \hat T | tx \in B_i\}$. 
If $i$ is odd, 
$\hat S_i$ is generated by $\sigma$ and hence has order 
$p$. On an open subset of $x \in B_i$, 
$<\sigma> = \{s \in \hat S | sx \in B_i\}$. 
\end{proposition}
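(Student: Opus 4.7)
The plan is to show $\hat T_i = \langle r \rangle$ for even $2 \leq i \leq p$ (the odd case being entirely symmetric) by a Prasad-type argument: if a hypothetical element $h \in \hat T_i$ lies outside $\langle r \rangle$, then the two maximal tori $\hat S$ and $h^{-1} \hat S h$ both stabilize $B_i$, and Proposition \ref{prasad} will force $B_i = {\cal B}$, contradicting the dimension bound.

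First I would record the two easy setup facts. The inclusion $\langle r \rangle \subseteq \hat T_i$ is immediate from Lemma \ref{restricts}, since $r \in \hat T$ and $r$ preserves $B_i$. And unwinding the recursive definition, for even $i$ the variety $B_i$ is the Zariski closure of $A \hat T \hat S \hat T \hat S \cdots \hat S$ ending in an $\hat S$-factor; consequently $B_i \hat S = B_i$, so $\hat S$ stabilizes $B_i$ under right multiplication.

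The core step is: assume $h \in \hat T_i$ with $h \notin \langle r \rangle$, and derive a contradiction. By the earlier lemma $\hat M \cap \hat T = \langle r \rangle$ (with $\hat M$ the normalizer of $\hat S$), $h$ does not normalize $\hat S$. Part 3 of the preceding invariant-subspace proposition then implies that $\hat S$ and $h^{-1} \hat S h$ share no nontrivial common invariant subspace. Both of these maximal tori preserve $B_i$: $\hat S$ by the setup, and for any $b \in B_i$ and $s \in \hat S$,
$$ b \cdot (h^{-1} s h) \,=\, (b h^{-1}) \, s \, h \,\in\, B_i \hat S \cdot h \,=\, B_i \cdot h \,=\, B_i, $$
using $h, h^{-1} \in \hat T_i$ together with the $\hat S$-stability. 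By Proposition \ref{prasad}, the Zariski closure of the subgroup generated by $\hat S$ and $h^{-1} \hat S h$ is all of $GL_p(\tilde F)$; projecting to $PGL_p(\tilde F)$, the transitivity of right multiplication on ${\cal B}$ forces $B_i = {\cal B}$. This contradicts $\dim B_i = i(p-1) \leq p(p-1) < p^2 - 1 = \dim {\cal B}$, which holds exactly in the range $i \leq p$. The odd case is parallel, swapping the roles of $\hat T$ and $\hat S$, invoking $\hat N \cap \hat S = \langle \sigma \rangle$ and part 2 in place of part 3.

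The pointwise open-set assertion is then an immediate corollary of the preceding lemma: that lemma produces an open $U \subseteq B_i$ on which $\hat T_i = \{t \in \hat T : xt \in B_i\}$, so substituting $\hat T_i = \langle r \rangle$ yields the claim (and likewise $\hat S_i = \langle \sigma \rangle$ for odd $i$). The hard part will be the short verification that $h^{-1} \hat S h$ stabilizes $B_i$ and matching the hypotheses of Proposition \ref{prasad}; once that is in hand, the contradiction is automatic.
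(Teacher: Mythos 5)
Your proof is correct and follows essentially the same route as the paper: observe that for even $i$ the recursive definition of $B_i$ ends with an $\hat S$-factor so $B_i\hat S = B_i$, note that an element $t \in \hat T_i$ outside $\langle r\rangle$ cannot normalize $\hat S$ (from $\hat M \cap \hat T = \langle r\rangle$), deduce that $B_i$ is invariant under both $\hat S$ and $t\hat S t^{-1}$, invoke the invariant-subspace proposition and then Proposition~\ref{prasad} to force $B_i = {\cal B}$, and derive a contradiction from $\dim B_i = i(p-1) < p^2 - 1$ for $i \leq p$. You spell out several steps the paper leaves implicit (the verification that $h^{-1}\hat S h$ stabilizes $B_i$, the hypothesis check for Prasad, the explicit dimension bound, and the reduction of the pointwise open-set claim to the preceding lemma), but the argument is the paper's argument.
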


\begin{proof}
We detail the $i$ even case as the other case is parallel. Suppose $t \in \hat T_i$ is not 
in the group generated by $r$. Since $B_i$ 
is closed under the operation of $\hat S$, it closed 
under the operations of $\hat S$ and $t\hat St^{-1}$. 
Since $t$ does not normalize $\hat S$, $B_i$ 
is closed under the operation of all of 
$GL_p(\tilde F)$ (\ref{prasad}), a contradiction. 
\end{proof} 

We can now be more precise about the morphism 
$B_i \times \hat T \to B_{i+1}$ ($i$ even) 
or $B_i \times \hat S \to B_{i+1}$ ($i$ odd). 

\begin{proposition}
Let $p + 1 \geq i \geq 2$ be even. Let 
$C = <r>$ act on $B_i \times \hat T$ via 
$r(x,t) = (xr^{-1},rt)$. Then $k(B_{i+1}) = 
k(B_i \times \hat T)^C$. If $i$ is odd and 
$C = <\sigma>$ acts on $B_i \times \hat S$ via 
$\sigma(x,s) = x\sigma^{-1},\sigma{x})$ 
then $k(B_{i+1}) = k(B_i \times \hat S)^C$. 
\end{proposition}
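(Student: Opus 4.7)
The plan is to exhibit the multiplication morphism $\phi:B_i\times\hat T\to B_{i+1}$, $(x,t)\mapsto xt$, as the quotient for the given $C$-action, then invoke Galois theory on function fields. Throughout, I treat the case $i$ even; the odd case is formally identical with $\hat S$ and $\sigma$ in place of $\hat T$ and $r$.

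First I would check the three structural properties of $\phi$. \emph{Surjectivity:} by the very definition of $B_{i+1}$ as the Zariski closure of $B_i\hat T$, $\phi$ is dominant. \emph{$C$-invariance:} since $\hat T$ is abelian (and $r\in\hat T$), $\phi(xr^{-1},rt)=xr^{-1}rt=xt$, so $\phi$ factors through the quotient stack/variety $(B_i\times\hat T)/C$. \emph{Generic fiber size:} fix a generic $y=xt\in B_{i+1}$. A point $(x',t')\in\phi^{-1}(y)$ is of the form $x'=xu^{-1}$, $t'=ut$ for some $u\in\hat T$, and the only constraint is $xu^{-1}\in B_i$. By the preceding proposition, on an open subset of $B_i$ the set $\{u\in\hat T:xu^{-1}\in B_i\}$ is exactly the stabilizer $\hat T_i=\langle r\rangle=C$. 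Thus the generic fiber of $\phi$ is a single $C$-orbit of size $p$.

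Combined with the corollary that $\dim B_{i+1}=\dim B_i+(p-1)=\dim(B_i\times\hat T)$, this shows $\phi$ is generically finite of degree $p$, so the function-field extension $k(B_{i+1})\hookrightarrow k(B_i\times\hat T)$ has degree $p$. The $C$-action on $B_i\times\hat T$ is faithful (the element $r$ acts nontrivially on the $\hat T$-factor by translation), and every element of $C$ fixes $k(B_{i+1})$ pointwise through $\phi^*$. Hence $k(B_i\times\hat T)/k(B_{i+1})$ is Galois with group $C$, and $k(B_{i+1})=k(B_i\times\hat T)^C$.

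The main technical point to get right is the identification of the generic fiber with a $C$-orbit; everything else is a dimension count plus standard Galois theory. This identification rests entirely on the stabilizer computation $\hat T_i=\langle r\rangle$ established in the previous proposition, which in turn used Prasad's result to exclude any larger stabilizer. For the odd case one repeats the argument verbatim, replacing $(x,t)\in B_i\times\hat T$ by $(x,s)\in B_i\times\hat S$, using $\sigma\in\hat S$ central in $\hat S$, and invoking the parallel statement $\hat S_i=\langle\sigma\rangle$.
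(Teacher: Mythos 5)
Your proposal is correct and follows essentially the same route as the paper: identify the generic fiber of the multiplication map $\phi:(x,t)\mapsto xt$ with the $C$-orbit $\{(xr^{-m},r^mt)\}$ via the stabilizer computation $\hat T_i=\langle r\rangle$, then conclude the induced map from $(B_i\times\hat T)/C$ is birational. The paper simply states ``this map has degree one and is a birational isomorphism'' where you spell out the dimension count and the Artin/Galois step, so yours is a slightly more detailed rendition of the same argument.
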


\begin{proof} 
As usual we need only prove the $i$ even case. 
Since $xrr^{-1}t = xt$ it is clear that 
$B_i \times \hat T$ induces $(B_i \times \hat T)/C 
\to B_{i+1}$. Also, if $xt = x't'$ then 
$x' = xt(t')^{-1}$ and so on an open subset of 
$B_i$ we have $t'(t)^{-1} = r^m \in T_i$ \and 
$x' = xr^{-m}$ and $t' = r^mt$. That is, 
this map has degree one and is a birational 
isomorphism. 
\end{proof}

\section{Rationality} 

We restate the rationality result that seems to be known as 
the "no-name" lemma. 
Recall that if $L/K$ is $G$ Galois 
and $V$ is a finite dimensional $L$ vector space we say $G$ acts semilinearly on $V$ if $G$ acts and 
for all $\ell \in L$,$v \in V$, and 
$g \in G$  
we have $g(\ell{v}) = g(\ell)g(v)$. 
Recall that 
$k(V)$ is the field of fractions of the 
$L$ vector space $V$ and $k(V^G)$ is the field of fractions of the $K$ vector space 
$V^G$. 
The proof follows from the Galois 
descent fact that $LV^G = V$.

\begin{lemma}(\cite{EM} p. 16)
Let $G$ be a finite group and $L/K$ a Galois extension of fields. 
Assume $V$ is an $L$ vector space with a semilinear action 
by $G$. Then $k(V)^G = k(V^G)$ which is therefore rational 
over $K$. 
\end{lemma}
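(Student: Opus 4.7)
The plan is to reduce the statement to the Galois descent fact already hinted at in the excerpt, namely that $L\cdot V^G = V$, and then to exhibit an explicit purely transcendental generating set.

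First I would verify the descent statement: the natural $L$-linear map $L\otimes_K V^G \to V$, $\ell\otimes v \mapsto \ell v$, is an isomorphism of $L[G]$-modules (where $G$ acts on the left factor via the Galois action and trivially on $V^G$). Surjectivity is the content of ``$LV^G = V$''; one proves it by picking any $v\in V$ and averaging $\sum_{g\in G}g(\ell)\,g(v)$ over a well-chosen $\ell\in L$, using linear independence of characters (or the normal basis theorem) to see that such averages span $V^G$ as an $L$-linear combination recovering $v$. Injectivity then follows by counting dimensions, since $\dim_K V^G \le \dim_L V$ always, with equality forced by surjectivity.

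Next I would pick a $K$-basis $v_1,\ldots,v_n$ of $V^G$. By the descent isomorphism this is also an $L$-basis of $V$, so the symmetric algebra / function field satisfies
\[
k(V) = L(v_1,\ldots,v_n),
\]
with $G$ acting on this rational function field by its Galois action on the coefficient field $L$ and trivially on the indeterminates $v_1,\ldots,v_n$. Taking $G$-invariants commutes with adjoining transcendentals fixed by $G$, so
\[
k(V)^G = L(v_1,\ldots,v_n)^G = L^G(v_1,\ldots,v_n) = K(v_1,\ldots,v_n).
\]
On the other hand $K(v_1,\ldots,v_n)$ is by construction the field of fractions of the symmetric algebra on the $K$-vector space $V^G$, i.e.\ $k(V^G)$. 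This gives the desired equality $k(V)^G = k(V^G)$, and the right-hand side is manifestly purely transcendental over $K$ of transcendence degree $\dim_K V^G = \dim_L V$.

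The only genuinely nontrivial step is the descent isomorphism $L\otimes_K V^G \simeq V$; everything else is formal manipulation of fixed subfields of rational function fields. I expect this to be the main obstacle, but it is standard (Speiser's form of Hilbert 90 for $GL_n$, or equivalently the normal basis theorem applied component-wise), and the excerpt already invokes it as ``the Galois descent fact that $LV^G = V$,'' so it may be cited rather than reproved.
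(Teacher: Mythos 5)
Your proof is correct and takes the same route the paper does: the paper dispatches the lemma in one line by invoking the Galois descent fact $LV^G = V$ (citing \cite{EM} for the rest), and your argument is exactly that hint spelled out — establish $L\otimes_K V^G \cong V$, choose a $K$-basis of $V^G$ which is then an $L$-basis of $V$, and take invariants of the resulting rational function field.
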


We have already observed that $P_2$ is rational. 
Thus to prove $P$ rational it suffices to show 
that $k(P_{i+1})/k(P_i)$ is rational for all $2 \leq i \leq p$. 
To show this it suffices to show that 
$k(\bar P_{i+1})/k(\bar P_i)$ is rational. 

In fact we will show the following. We assume $i$ is even 
as the other case is parallel. 
On $\hat P_i \times \P(F[x]_p)$ we let $r$ act via $r' = r \times 1$
but we let $\sigma$ act as $\sigma' = \sigma \times \sigma$. 

Let us begin by proving the last step of the argument for the above, which will serve 
to explain the method. 

\begin{theorem}
If $\bar P_{i+1}$ is birationally isomoprhic to 
$(\hat P_i \times \P(F[x]_p))/<r',\sigma'>$ then 
$k(\bar P_{i+1})/k(\bar P_i)$ is rational. 
\end{theorem}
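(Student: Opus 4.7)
The plan is to reduce the statement to a direct application of the no-name lemma, applied to a semilinear representation of $G = \langle r', \sigma'\rangle$ on a vector space over $L = k(\hat P_i)$, and then descend from the affine to the projective picture via a commuting $\mathbb{G}_m$ scaling.

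Set $K = k(\bar P_i)$ and $L = k(\hat P_i)$. By Lemma \ref{galois}, $L/K$ is Galois with group $G = \langle r,\sigma\rangle \cong C_p \oplus C_p$. Form the $L$-vector space $V = L \otimes_F F[x]_p$ of dimension $p$. The group $G$ acts on $V$ diagonally: on the left tensor factor via the Galois action, and on $F[x]_p$ by the action dictated by $r', \sigma'$ (so $r'$ acts trivially on $F[x]_p$ while $\sigma'$ acts by the specified symmetry, which is $F$-linear). Since the action on $F[x]_p$ is $F$-linear and the action on $L$ is the Galois action, the combined action on $V$ is $L$-semilinear in the sense of the no-name lemma.

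The no-name lemma then yields $k(V)^G = k(V^G)$, and Galois descent gives $L \otimes_K V^G \cong V$, so $V^G$ is a $K$-vector space of dimension $p$. Choosing a $K$-basis $y_0,\dots,y_{p-1}$ of $V^G$, we get $k(V)^G = K(y_0,\dots,y_{p-1})$, purely transcendental of transcendence degree $p$ over $K$. The standard scaling $\mathbb{G}_m$-action on $V$ commutes with $G$ and preserves $V^G$, so invariants under $G$ and $\mathbb{G}_m$ commute. Since $k(V)^{\mathbb{G}_m} = k(\P(V)) = k(\hat P_i \times \P(F[x]_p))$, taking invariants in the other order gives
\[
k(\hat P_i \times \P(F[x]_p))^G = (k(V)^G)^{\mathbb{G}_m} = k(V^G)^{\mathbb{G}_m} = K\bigl(y_1/y_0,\dots,y_{p-1}/y_0\bigr),
\]
which is rational over $K$ of transcendence degree $p-1$. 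By the hypothesis of the theorem, the left hand side is $k(\bar P_{i+1})$, completing the argument.

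The main obstacle is essentially bookkeeping: verifying that the $G$-action on $V$ is honestly $L$-semilinear (which is automatic once one unpacks the diagonal definition), that the scaling $\mathbb{G}_m$ commutes with $G$, and that $V^G$ really is $K$-free of rank $p$ so that the projective quotient $k(V^G)^{\mathbb{G}_m}$ is manifestly rational. None of this is deep; the force of the theorem comes from the birational description of $\bar P_{i+1}$ given as a hypothesis, which is what will require serious work in the earlier steps not shown here.
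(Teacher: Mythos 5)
Your proof is correct and takes essentially the same approach as the paper: both hinge on the no-name lemma applied to the $C_p \oplus C_p$ semilinear action on $F[x]_p \otimes_F k(\hat P_i)$, with $L = k(\hat P_i)$, $K = k(\bar P_i)$. You apply no-name once to the full group and make explicit the passage from $k(V)^G$ to $k(\P(V))^G$ via the commuting $\mathbb{G}_m$-scaling, whereas the paper first takes $r'$-invariants (trivially, since $r'$ fixes $\P(F[x]_p)$) and then invokes no-name for $\sigma'$ directly at the projective level by exhibiting the eigenvectors $x^i/1$; these are cosmetic differences, with your version being slightly more careful about the affine-to-projective step.
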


\begin{proof}
Let $K_j = k(\hat P_j)$ and $L_j = k( K_j \times_F \P(F[x]_p))$. Translated to fields 
we have $r'$ acting on $L_i$ via its 
action on $K_i$ and $\sigma'$ acting by acting on $K_i$ and $\P(F[x]_p)$ 
diagonally. The assumption of the theorem is that the fixed field 
$L_i^{<r',\sigma'>} = K_{i+1}$. 
But the $r'$ invariant field is clearly 
$k(K_i^{<r>} \times \P(F[x]_p))$. Now we saw that $\sigma$ acts 
on $F[x]_p$ via $f(x) \to f(\rho{x})$ which is a linear action. 
In fact letting $1,x,\ldots,x^{p-1}$ be the basis of $F[x]_p$, then 
$x^i/1 \in K_i^{<r>}(\P(F[x]_p)$ is an eigenvectorwith eigenvalue 
$\rho^i$. 
By the "no-name" lemma $k((K_i^{<r>}\times \P(F[x]_p))^{<\sigma'>}$ 
is rational over $K_i^{<r,\sigma>} = k(\bar P_i)$. 
\end{proof}

Obviously we want to prove: 

\begin{theorem} 
$k(K_i \times \P(F[x]_p)^{<r',\sigma'>} = k(\bar P_{i+1})$. 
\end{theorem}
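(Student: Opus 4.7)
The plan is to realize $\bar P_{i+1}$ birationally as $(\hat P_i\times\P(F[x]_p))/\langle r',\sigma'\rangle$ by matching the degree of the dominant rational map
$$\mathcal{T}\colon \hat P_i\times\P(F[x]_p)\longrightarrow\bar P_{i+1},\quad ((\hat\alpha,\hat\beta),g(x))\mapsto (\hat\alpha,g(\hat\alpha)\hat\beta),$$
with the order of $\langle r',\sigma'\rangle$, and then invoking the Galois correspondence. I treat the case $i$ even; the odd case is parallel after swapping $(\mathcal{T},\Psi,\hat T,r)$ with $(\mathcal{S},\Psi',\hat S,\sigma)$.

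First I compute $\deg\mathcal{T}=p^2$. Lemma \ref{Qtoral} gives the identity $\hat Q\circ \hat{\mathcal T}=\mathcal{T}\circ(\mathrm{id}\times\Psi)$ of rational maps into $\bar P_{i+1}$. By Lemma \ref{galois} the cover $\hat Q\colon\hat P_{i+1}\to\bar P_{i+1}$ has degree $p^2$. The map $\hat{\mathcal T}\colon \hat P_i\times\P(F[x]_p)\to\hat P_{i+1}$ is dominant of degree $p$: under $\Phi$ it becomes the multiplication $B_i\times\hat T\to B_{i+1}$, whose generic fiber is the stabilizer $\hat T_i=\langle r\rangle$ of order $p$ as established in \S2. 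Since $\Psi$ is $C_p$-Galois of degree $p$, comparing degrees on the two sides of the identity gives $p\cdot\deg\mathcal T = p^2\cdot p$, hence $\deg\mathcal T=p^2$.

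Next I check that $\langle r',\sigma'\rangle$ has order $p^2$ and acts as deck transformations of $\mathcal T$. Recall $r(\hat\alpha,\hat\beta)=(\hat\alpha,\rho\hat\beta)$ and $\sigma(\hat\alpha,\hat\beta)=(\rho\hat\alpha,\hat\beta)$. Acting by $r'=r\times 1$ multiplies the second component of $\mathcal T((\hat\alpha,\hat\beta),g)$ by $\rho$, which is absorbed by the second projective scaling of $\bar P\subset\P(D^*)\times\P(D^*)$; hence $\mathcal T\circ r'=\mathcal T$. Acting by $\sigma'=\sigma\times\sigma$, with $\sigma$ on $\P(F[x]_p)$ chosen as the order-$p$ linear map $g(x)\mapsto g(\rho^{-1}x)$ so that $g(\rho^{-1}x)|_{x=\rho\hat\alpha}=g(\hat\alpha)$, produces $(\rho\hat\alpha,g(\hat\alpha)\hat\beta)$, which equals $(\hat\alpha,g(\hat\alpha)\hat\beta)$ after the first projective scaling; hence $\mathcal T\circ\sigma'=\mathcal T$. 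Each of $r',\sigma'$ has order $p$ because its restriction to $\hat P_i$ is a nontrivial element of $\Gal(\hat P_i/\bar P_i)\cong C_p\oplus C_p$. They commute because $r$ and $\sigma$ commute and $r'$ acts trivially on $\P(F[x]_p)$. Moreover $\langle r'\rangle\cap\langle\sigma'\rangle=1$ since nonidentity powers of $\sigma'$ act nontrivially on $\P(F[x]_p)$ while $r'$ fixes it. Thus $\langle r',\sigma'\rangle\cong C_p\times C_p$ acts faithfully on $L_i=k(K_i\times\P(F[x]_p))$ and fixes $k(\bar P_{i+1})$ pointwise.

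The conclusion then follows by a degree count. Faithful action gives $[L_i:L_i^{\langle r',\sigma'\rangle}]=p^2$; the deck-transformation property gives $k(\bar P_{i+1})\subseteq L_i^{\langle r',\sigma'\rangle}$; and $[L_i:k(\bar P_{i+1})]=\deg\mathcal T=p^2$. Comparing indices in the tower $k(\bar P_{i+1})\subseteq L_i^{\langle r',\sigma'\rangle}\subseteq L_i$ forces equality, so $L_i^{\langle r',\sigma'\rangle}=k(\bar P_{i+1})$. The main obstacle is the degree computation $\deg\mathcal T=p^2$, which rests on the earlier identification $\hat T_i=\langle r\rangle$ (which in turn uses Proposition \ref{prasad} together with the irreducibility of the cocharacter lattice as a $\Q[C]$-module) and on the compatibility furnished by Lemma \ref{Qtoral} with no stray fibers. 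Once that is in hand, the remainder is bookkeeping about projective scaling in $\bar P$ together with one application of the Galois correspondence.
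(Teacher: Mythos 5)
Your proof is correct, but it takes a genuinely different route from the paper. The paper proceeds constructively: it combines the already-established isomorphism $k(B_{i+1}) = k(B_i \times \hat T)^{\langle(r,r^{-1})\rangle}$ (the Proposition closing Section~2) with $B_{i+1}/\langle r,\sigma\rangle = \bar P_{i+1,\tilde F}$, observes that the group generated by $(r,r^{-1})$ and the right-multiplication $r$ on $B_{i+1}$ is $\langle r\rangle\times\langle r\rangle$ acting componentwise, and so factors the quotient as $(B_i/\langle r\rangle)\times(\hat T/\langle r\rangle)$ with $\hat T/\langle r\rangle$ identified with $\P(F[x]_p)$ via $\Psi$ and $\sigma$ acting diagonally. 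You instead bypass the explicit group-theoretic bookkeeping with a degree count: you read off $\deg\mathcal T = p^2$ from the commutative square of Lemma~\ref{Qtoral} (using $\deg\hat Q = p^2$, $\deg\hat{\mathcal T}=p$ via $\hat T_i=\langle r\rangle$, $\deg\Psi=p$), verify that $\langle r',\sigma'\rangle$ has order $p^2$ and consists of deck transformations of $\mathcal T$, and close the tower $k(\bar P_{i+1})\subseteq L_i^{\langle r',\sigma'\rangle}\subseteq L_i$ by comparing indices. Both arguments rest on the same three inputs (the stabilizer computation $\hat T_i=\langle r\rangle$, the $C_p\oplus C_p$ cover $\hat Q$, and Lemma~\ref{Qtoral}); yours is shorter and avoids tracking the composite group action, at the cost of being less explicit about how the isomorphism is realized. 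One small remark: your choice of the $\sigma$-action on $\P(F[x]_p)$ as $g(x)\mapsto g(\rho^{-1}x)$ is the one that actually makes $\mathcal T\circ\sigma'=\mathcal T$ hold (consistent with the Corollary $\sigma(AT_{\vec z})=\sigma(A)T_{\sigma(\vec z)}$ and with $\vec g=(g(1),\ldots,g(\rho^{p-1}))$), whereas the text's phrase ``$f(x)\to f(\rho x)$'' has the inverse sign; your convention is the correct one for the deck-transformation identity.
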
 

To prove the above equality it suffices to prove it after extending 
scalars to $\tilde F$. That is, it suffices to show the following. 
Let $\tilde K_j = k(\hat P_{\tilde F})$ and 
$L_j = k(\tilde K_j \times_{\tilde F} \P(\tilde F[x]_p))$. 
We claim: 

\begin{theorem}
$k(\tilde K_i \times \P(\tilde F[x]_p)^{<r',\sigma'>} = k(\bar P_{i+1,\tilde F})$
\end{theorem}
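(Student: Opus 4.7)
The plan is to realize $\bar P_{i+1,\tilde F}$ as the quotient of $\hat P_{i,\tilde F}\times\P(\tilde F[x]_p)$ by $\langle r',\sigma'\rangle$ by studying the rational map
$$\mu\bigl((\hat\alpha,\hat\beta),g(x)\bigr) \;=\; \text{class in }\bar P\text{ of }(\hat\alpha,\,g(\hat\alpha)\hat\beta),$$
that is, the restriction of ${\cal T}$ to $\hat P_i\times\P$. The image of $\mu$ lies in $\bar P_{i+1,\tilde F}$ because, by Lemma \ref{Qtoral}, one has the factorization $\hat Q\circ\hat{\cal T}=\mu\circ(\mathrm{id}\times\Psi)$, whose left-hand side takes values in $\hat Q(\hat P_{i+1,\tilde F})=\bar P_{i+1,\tilde F}$, and $\Psi$ is dominant. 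The proof then amounts to (i) computing that $\deg\mu=p^2$ and (ii) verifying that $\langle r',\sigma'\rangle$ acts on the cover with order exactly $p^2$, preserving $\mu$.

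First I would note $\mu$ is generically finite: both source and target have dimension $(i+1)(p-1)$ by the earlier corollaries on the dimensions of $\hat P_i$ and $\bar P_{i+1}$. To compute $\deg\mu$ I would again use $\hat Q\circ\hat{\cal T}=\mu\circ(\mathrm{id}\times\Psi)$: by the proposition identifying $k(B_{i+1})$ as the $\langle r\rangle$-fixed field of $k(B_i\times\hat T)$, $\hat{\cal T}$ has degree $p$; by Lemma \ref{galois}, $\hat Q$ has degree $p^2$; so the left-hand side has degree $p^3$. Since $\Psi$ is $C_p$-Galois of degree $p$, this gives $\deg\mu=p^2$.

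Next I would check that both $r'$ and $\sigma'$ preserve the fibers of $\mu$. For $r'((\hat\alpha,\hat\beta),g)=((\hat\alpha,\rho\hat\beta),g)$, $\mu$ produces the class of $(\hat\alpha,\rho g(\hat\alpha)\hat\beta)$, which equals $\mu((\hat\alpha,\hat\beta),g)$ by $F^*$-scaling in the second coordinate of $\bar P$. For $\sigma'=\sigma\times\sigma$, the action of $\sigma$ on $\P(\tilde F[x]_p)$ is $g(x)\mapsto g(\rho^{-1}x)$ (the polynomial associated to the shift $\sigma(\vec g)$, computed from $\vec g=(g(1),g(\rho),\dots,g(\rho^{p-1}))$); thus $\mu(\sigma'((\hat\alpha,\hat\beta),g))$ is the class of $(\rho\hat\alpha,\,g(\rho^{-1}\cdot\rho\hat\alpha)\hat\beta)=(\rho\hat\alpha,g(\hat\alpha)\hat\beta)$, which agrees with $\mu((\hat\alpha,\hat\beta),g)$ after scaling $\hat\alpha$ by $\rho$. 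So $k(\bar P_{i+1,\tilde F})\subset k(\hat P_i\times\P)^{\langle r',\sigma'\rangle}$.

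Finally, $r'$ and $\sigma'$ commute and each has order $p$; since $r'$ moves only $\hat\beta$ while $\sigma'$ moves $\hat\alpha$ (and $g$), the two are independent, so $|\langle r',\sigma'\rangle|=p^2=\deg\mu$. Comparing indices gives the required equality of fixed fields. The hard part will be the $\sigma'$ verification: the cancellation $g(\rho^{-1}\cdot\rho\hat\alpha)=g(\hat\alpha)$ forces the correct sign convention on how $\sigma$ acts on $\P(\tilde F[x]_p)$, and one must ensure it is exactly this action (coming from conjugation by $\sigma\in\hat S$ on $\hat T$) that appears in $\sigma'=\sigma\times\sigma$.
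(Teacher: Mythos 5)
Your proof is correct, and the computation is right, but you organize the argument differently from the paper. The paper's proof is structural: it factors the composite $B_i\times\hat T \to B_{i+1}\to \bar P_{i+1,\tilde F}$ (with $B_{i+1}$ being birationally $(B_i\times\hat T)/\langle(r,r^{-1})\rangle$ and $\bar P_{i+1,\tilde F}=B_{i+1}/\langle r,\sigma\rangle$), observes that the three generators of the total covering group act on $B_i\times\hat T$ as $(r,r^{-1})$, $(1,r)$ (right translation of the $\hat T$ factor) and the diagonal conjugation $(\sigma,\sigma)$, quotients first by the $\hat T$-translation $\langle r\rangle$ to land in $B_i\times\P(\tilde F[x]_p)$ via $\Psi$, and reads off that the residual group is exactly $\langle r',\sigma'\rangle$. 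You instead run a degree count on the map $\mu={\cal T}$ restricted to $\hat P_{i,\tilde F}\times\P(\tilde F[x]_p)$, using the intertwining $\hat Q\circ\hat{\cal T}=\mu\circ(\mathrm{id}\times\Psi)$ from Lemma \ref{Qtoral} together with the degree-$p$ assertion for $\hat{\cal T}$ (the last proposition of Section 3) and the degree-$p^2$ Galois cover (Lemma \ref{galois}), then verify directly that $r'$ and $\sigma'$ preserve fibers. The ingredients are the same; the paper's version produces the quotient explicitly while yours pins it down by comparing $\deg\mu=p^2$ against $|\langle r',\sigma'\rangle|=p^2$. Two small remarks. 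First, in the faithfulness step you should make the ``independence'' precise, e.g.\ by noting the projection to $\hat P_{i,\tilde F}$ is $\langle r',\sigma'\rangle$-equivariant and the image group there is already $C_p\oplus C_p$ by Lemma \ref{galois}, which gives $|\langle r',\sigma'\rangle|\geq p^2$ and faithfulness at once. Second, your determination that $\sigma$ acts on $\P(\tilde F[x]_p)$ by $g(x)\mapsto g(\rho^{-1}x)$ is the correct sign: one checks $\sigma^{-1}T_{\vec z}\sigma=T_{\sigma(\vec z)}$ and $\sigma(\vec g)$ is the vector of $g(\rho^{-1}x)$; the paper's proposition on $\Psi$ states $f(x)\mapsto f(\rho x)$, which appears to be a sign slip (harmless to either argument, since only the eigenbasis $1,x,\dots,x^{p-1}$ is used in the no-name step, but your cancellation $g(\rho^{-1}\cdot\rho\hat\alpha)=g(\hat\alpha)$ does require the sign you chose).
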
 

The above is a result about $\hat P_{i,\tilde F} \times \P(\tilde F[x]_p) 
\to \bar P_{i+1,\tilde F}$. We translate the statement to 
the $B_i \subset {\cal B}$. For convenience we only detail the case 
$i$ is even, as the other case is parallel. We have 
$(B_i \times \hat T/(<r,r^{-1}>)  = B_{i+1}$ and $B_{i+1}/(<r,\sigma>) = 
\bar P_{i+1,\tilde F}$. Now $r$ acts on $B_{i+1}$ by right multiplication. 
That is, we can view it as acting on $\hat T$. The group generated by 
$(r,r^{-1})$ and $r$ is $<r> \times <r>$ and so quotienting 
by the $r$ stuff we get $(B_i/<r>) \times (\hat T/<r>)$. Now 
$\hat T/<r>$ is birationally $\P(F[x]_p)_f$ via $\Psi$. 
$\sigma$ acts by right multiplication and if $A \in B_i$ we have 
$AT\sigma = (A\sigma)(\sigma^{-1}T\sigma)$ so $\sigma$ acts 
diagonally on $B_i\hat T$. Altogether, $\bar P_{i+1,\tilde F}$ 
is $B_i/r \times \P[F[x]_p)_f/(\sigma,\sigma)$ as claimed 
and needed. This proves $P_{p+1} = P$ is rational. 

\section{Final Result} 

\begin{theorem}\label{result}
If $UD(F,p)$ is cyclic then $Z(F,p)$ is stably rational. 
\end{theorem}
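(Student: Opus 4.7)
The plan is to obtain two different descriptions of the function field of a single Severi--Brauer variety: one via Theorem \ref{iso} (using the hypothesis that $UD(F,p)$ is cyclic) combined with this paper's main rationality result, the other via Theorem \ref{rational}. The first will exhibit the field as purely transcendental over $Z(F,p)$, and the second as purely transcendental over $F$. Comparing the two shows that $Z(F,p)$ becomes purely transcendental over $F$ after adjoining finitely many indeterminates, which is the definition of stable rationality.

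Write $L=Z(F,p)$ and $U=UD(F,p)$, and assume $U/L$ is cyclic. The main result of this paper, applied to the cyclic division algebra $U/L$, gives that $P(U)$ is rational over $L$; hence $k(P(U))\cong L(s_1,\ldots,s_N)$ with $N=p^2+1$. By Theorem \ref{iso} applied to $U/L$, writing $L'=L(x,y)$, $E_0=(x,y)_{p,L'}$ and $U'=U\otimes_L L'$, we obtain a birational identification $k(P(U))\cong k(SB(E_0\otimes_{L'}(U')^\circ))$. Now the generic division algebra behaves well under purely transcendental base change: $Z(F(x,y),p)=Z(F,p)(x,y)=L'$ and $UD(F(x,y),p)=U\otimes_L L'=U'$. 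Setting $E_1=(x,y)_{p,F(x,y)}$, we have $E_0=E_1\otimes_{F(x,y)}L'$, so Theorem \ref{rational} applied over the base field $F(x,y)$ to the degree-$p$ algebra $E_1$ yields that $SB(U'\otimes_{L'}E_0^\circ)$ is rational over $F(x,y)$. Finally, Theorem \ref{isomo} identifies $SB(E_0\otimes_{L'}(U')^\circ)$ and $SB(U'\otimes_{L'}E_0^\circ)$ birationally, since both are birational to the variety of $L'$-algebra isomorphisms between $E_0$ and $U'$ and inversion $f\mapsto f^{-1}$ exchanges the two points of view. Chaining the identifications, $L(s_1,\ldots,s_N)$ is purely transcendental over $F(x,y)$, and hence over $F$, proving that $Z(F,p)$ is stably rational over $F$.

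The single substantive point of care is the identification $Z(F(x,y),p)=Z(F,p)(x,y)$ and $UD(F(x,y),p)=UD(F,p)\otimes_F F(x,y)$, which is what makes the pair $(E_0,U')$ produced by Theorem \ref{iso} (applied to $U/L$) match exactly the pair $(E',UD(F(x,y),p))$ called for in the hypothesis of Theorem \ref{rational} (applied over $F(x,y)$ to the symbol $E_1$). This compatibility follows from the universal property of $UD$ under purely transcendental extensions, but it is the one ingredient not already packaged in the results invoked, so the argument hinges on spelling it out cleanly so that a genuine birational isomorphism of Severi--Brauer varieties (via Theorem \ref{isomo}) can be applied.
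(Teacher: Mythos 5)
Your proof is correct and follows essentially the same route as the paper: use the paper's main rationality result to show $k(P(UD(F,p)))$ is rational over $Z(F,p)$, then use Theorem \ref{iso} together with Theorem \ref{rational} to show the same field is rational over $F(x,y)$ and hence over $F$. The only difference is that you spell out two points the paper glosses over — the compatibility $Z(F(x,y),p)=Z(F,p)(x,y)$ and $UD(F(x,y),p)=UD(F,p)\otimes_{Z(F,p)}Z(F,p)(x,y)$, and the harmless switch between $SB(A)$ and $SB(A^{\circ})$ via Theorem \ref{isomo} — both of which are correct and indeed implicitly used in the paper's argument.
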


\begin{corollary}
If $Z(F,p)$ is not stably rational then $UD(F,p)$ is not cyclic. 
\end{corollary}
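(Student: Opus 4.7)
The plan is to recognize that, under the cyclicity hypothesis, the field $L := k(P(UD(F,p)))$ admits two different descriptions: on the one hand, the main rationality result established in Sections~2--4 makes it purely transcendental over $Z(F,p)$; on the other hand, Theorems~\ref{iso} and \ref{rational} together exhibit it as the function field of a variety that is rational over $F(x,y)$, hence over $F$. Combining the two pictures will give stable rationality of $Z(F,p)$ over $F$.

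Concretely, I would first apply the rationality theorem of the preceding sections with $F$ replaced by $Z(F,p)$ and $D$ replaced by $UD(F,p)$. The assumption that $UD(F,p)$ is cyclic is exactly what is needed to invoke the theorem in this generality, and the conclusion is that $P(UD(F,p))$ is rational over $Z(F,p)$; equivalently, $L \cong Z(F,p)(t_1,\dots,t_{p^2+1})$.

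Next, apply Theorem~\ref{iso} in the same generality. Setting $F' := Z(F,p)(x,y)$, $E := (x,y)_{p,F'}$ and $D' := UD(F,p) \otimes_{Z(F,p)} F'$, Theorem~\ref{iso} identifies $L$ with $k(X)$ for $X := SB(E \otimes_{F'} (D')^\circ)$. The decisive observation is that this $X$ is precisely the variety produced by Theorem~\ref{rational} when it is applied with base field $F(x,y)$ and central simple algebra $(x,y)_{p,F(x,y)}$. This requires only the standard compatibility $UD(F(x,y),p) = UD(F,p) \otimes_F F(x,y) = D'$ and $Z(F(x,y),p) = Z(F,p) \otimes_F F(x,y) = F'$, which follows because the generic matrices remain generic after adjoining the independent transcendentals $x,y$. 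Theorem~\ref{rational} then gives that $X$ is rational over $F(x,y)$, so that $L = k(X)$ is purely transcendental over $F$.

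To finish, combine the two descriptions: $L$ is simultaneously a purely transcendental extension of $Z(F,p)$ and a purely transcendental extension of $F$. Adjoining to $Z(F,p)$ the transcendence basis produced by the first description yields a field that, by the second description, is isomorphic to $F(w_1,\dots,w_N)$. Hence $Z(F,p)$ is stably rational over $F$. The substantive work has already been carried out in the earlier sections, which establish rationality of $P(D)$ for cyclic $D$ of degree $p$; the only obstacle remaining in this final step is the bookkeeping needed to identify the $X$ from Theorem~\ref{iso} with the $X$ from Theorem~\ref{rational}, i.e.\ to verify that $UD$ and $Z$ are compatible with a purely transcendental extension of scalars.
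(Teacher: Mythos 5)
Your proposal is correct and is essentially the argument given in the paper: combine the rationality of $P(UD(F,p))$ over $Z(F,p)$ from Sections~2--4 with the alternate description of $k(P)$ supplied by Theorem~\ref{iso} and the rationality over $F(x,y)$ supplied by Theorem~\ref{rational}. The only difference is cosmetic: you spell out the compatibilities $Z(F(x,y),p)=Z(F,p)(x,y)$ and $UD(F(x,y),p)=UD(F,p)\otimes_{Z(F,p)}Z(F,p)(x,y)$ explicitly, whereas the paper keeps these implicit in its notation $E^{\circ}_{Z(F,p)}$.
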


\begin{proof}
Let $P = P(UD(F,p))$ and $K = k(P)$ which we showed is rational 
over $Z(F,p)$. We saw in \ref{iso} that $K$ has the following alternate 
description. Let $F' = F(x,y)$ purely transcendental of degree 2 
over $F$ and let $E = (x,y)_{p,F'}$. Then $K$ is the field of fractions 
of $SB(UD \otimes_{Z(F,p)} E^{\circ}_{Z(F,p)})$. By 
\ref{rational}, $K/F'$ and hence $K/F$ is rational.
\end{proof} 

The argument of this paper actually allows us to prove something 
slightly stronger, and perhaps useful, which we state as a corollary. 

\begin{corollary}
Suppose $Z(F,p)(x_1,\ldots,x_{p^2+1})$ is the purely transcendental 
extension of $Z(F,p)$ of degree $p^2 + 1$. If 
$Z(F,P)(x_1,\ldots,x_{p^2+1})$ is not rational, 
$UD(F,p)$ is not cyclic. 
\end{corollary}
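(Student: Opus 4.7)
The plan is to follow the proof of Theorem~\ref{result} almost verbatim, but to keep bookkeeping of the transcendence degree of $k(P)/Z(F,p)$ so as to pin down the exact number of variables needed. Under the hypothesis that $UD(F,p)$ is cyclic, the entire argument of Sections~2--4 applies with base field $Z(F,p)$ and $D = UD(F,p)$, yielding that $P = P(UD(F,p))$ is a rational variety over $Z(F,p)$. The prerequisites for that argument (characteristic zero, containing $\rho$) hold because $Z(F,p)\supset F$, and cyclicity of $UD(F,p)$ is precisely the hypothesis being assumed.

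Next I would record that $\dim P = p^2 + 1$. This is the dimension count already established in Section~3: each step $\bar P_i \subset \bar P_{i+1}$ of the filtration increases dimension by $p-1$, and the double-cone relation gives $\dim P_i = i(p-1) + 2$, so $\dim P = \dim P_{p+1} = (p+1)(p-1) + 2 = p^2 + 1$. Consequently there is a $Z(F,p)$-algebra isomorphism
\[
 k(P) \;\cong\; Z(F,p)(y_1, \ldots, y_{p^2+1}).
\]

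The second ingredient is the alternate description of $k(P)$ supplied by Theorem~\ref{iso}: with $F' = F(x,y)$ and $E = (x,y)_{p,F'}$, $k(P)$ is the function field of the Severi--Brauer variety $SB\bigl(UD(F,p) \otimes_{Z(F,p)} E^\circ_{Z(F,p)}\bigr)$, which by Theorem~\ref{rational} is rational over $F$. Combining the two descriptions gives an $F$-algebra isomorphism
\[
 Z(F,p)(y_1, \ldots, y_{p^2+1}) \;\cong\; F(t_1, \ldots, t_{2p^2+3}),
\]
the transcendence degree on the right being forced by that of $Z(F,p)/F = p^2 + 2$. Thus $Z(F,p)(x_1, \ldots, x_{p^2+1})$ is rational over $F$, and the contrapositive is exactly the corollary.

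Because the substantive work is already contained in Theorem~\ref{result} together with Theorems~\ref{iso} and \ref{rational}, there is no real obstacle. The only point requiring care is extracting the exact transcendence degree $p^2 + 1$ of $k(P)/Z(F,p)$ from the dimension count for $P$ and noting that the filtration argument really produces a $Z(F,p)$-rational structure on $k(P)$ (not merely stable rationality), so that the full $p^2+1$ transcendentals can be placed on the $Z(F,p)$ side.
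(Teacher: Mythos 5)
Your argument is exactly what the paper has in mind (the corollary is stated without a separate proof because it is read off from the proof of Theorem~\ref{result}): the filtration argument shows $P = P(UD(F,p))$ is $Z(F,p)$-rational of dimension $p^2+1$, so $k(P)\cong Z(F,p)(y_1,\ldots,y_{p^2+1})$, while Theorems~\ref{iso} and~\ref{rational} show $k(P)$ is $F$-rational; the contrapositive is the corollary. One small numerical slip in your bookkeeping: $\tr\deg(Z(F,p)/F) = p^2+1$ (it is the quotient of the $2p^2$-dimensional space $M_p\oplus M_p$ by the generically free action of $PGL_p$, of dimension $p^2-1$), not $p^2+2$, so the right-hand side should read $F(t_1,\ldots,t_{2p^2+2})$; this does not affect the conclusion, since the only fact used is that $k(P)$ is purely transcendental over $F$.
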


We end this section with a note about something we have not proven. 
If $P(D) = P$ contains $(\alpha,\beta)$ 
with $\alpha^p = x$ and $\beta^p = y$, then 
$(f(\beta)\alpha)^p = N_{F(\beta)/F}(f(\beta))x$ 
and $(f(\alpha)\beta)^p = N_{F(\alpha)/F}(f(\alpha))y$. 
This reminds one of the so called common slot 
relations 
$$(x,y)_{p,F} \cong (N_{F(y^{1/p})/F}(z)x,y)_{p,F}  
\cong (x,N_{F(x^{1/p})/F}(z)y)_{p,F}.$$ 
In fact, the machinery above defines a dominant 
morphism $\Delta: F[x]_p \times \ldots \times F[x]_p \to P$ 
whose image are the pairs reached by this 
common slot process. If $D \cong (x',y')_{p,F}$ 
then the pairs $(\alpha',\beta') \in P$ with 
$\alpha'^p = x'$ and $\beta'^p = y'$ are dense 
in $P$ but that does NOT imply one of them is in the 
image of $\Delta$.

\section{Appendix}

The arguments above do not require this section, 
but it may be useful to note that the definition 
of $P$ does not require taking radicals and that 
we can directly prove that $P$ is smooth (which could already 
be deduced from the pervious sections but not directly). 

To set up what I mean by all this, let $D/F$ be cyclic of degree 
$p$ as above 
but we do NOT need $F$ of characteristic 0 just that the 
characteristicof $F$ is prime to $p$ (and containing $\rho$, a primitive $p$ root of one). Let 
$u_1,\ldots,u_{p^2}$ be an $F$ basis of $D$ and let 
$\vec x = (x_1,\ldots,x_{p^2})$, $\vec y = (y_1,\ldots,y_{p^2})$ 
be vectors of indeterminants. In $D \otimes_F F[\vec x,\vec y]$ 
we have "generic" elements $X = \sum_i x_iu_i$ and 
$Y = \sum_j y_iu_i$. Write $XY - \rho{YX} = \sum_i f_i(\vec x,\vec y)u_i$. 
Let $n(X)$, $n(Y)$ be the norms of $X$ and $Y$, both elements of 
$F[\vec x,\vec y]$. Let 
$S =  F[\vec x,\vec y](1/(n(X)n(Y)))$. 
and let $I \subset S$ be the ideal generated by the 
$f_i$. Set $R = S/I$. The point of this appendix is 
to show: 

\begin{theorem}
$R$ is formally smooth and hence smooth. 
\end{theorem}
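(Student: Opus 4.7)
My plan is the Jacobian criterion. Since formal smoothness coincides with smoothness for finite-type algebras over a field and descends faithfully flatly, it suffices to work over an algebraic closure $\bar F$ and show $R\otimes_F\bar F$ is smooth. Identifying $D\otimes_F\bar F$ with $M_p(\bar F)$, the ring $S$ is a localization of a polynomial ring in $2p^2$ variables, hence regular. Because $P$ is absolutely irreducible of known dimension $p^2+1$, the Jacobian criterion reduces everything to showing that at every closed point $(\alpha,\beta)\in V(I)\subset\Spec S$ the Jacobian of the $f_k$'s has rank exactly $p^2-1$. Concretely, that Jacobian is the matrix of the linear map
\[
L\colon D\oplus D\to D,\qquad L(\epsilon,\delta)=\epsilon\beta-\rho\beta\epsilon+\alpha\delta-\rho\delta\alpha.
\]

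Using the nondegenerate trace form $\langle A,B\rangle=\tr(AB)$ on $D$, a short computation shows that the adjoint is $L^\ast(\eta)=(\eta\alpha-\rho\alpha\eta,\,\beta\eta-\rho\eta\beta)$, so
\[
\ker L^\ast=\{\eta\in D:\mathrm{Ad}(\alpha)\eta=\rho^{-1}\eta,\ \mathrm{Ad}(\beta)\eta=\rho\eta\}.
\]
I will show this joint eigenspace is one-dimensional, which yields $\rank L=p^2-1$. Because $\alpha,\beta\in D$ are invertible and satisfy $\alpha\beta=\rho\beta\alpha$, the subalgebra $\bar F\langle\alpha,\beta\rangle\subseteq D$ is a nonzero homomorphic image of the central simple symbol algebra $(\alpha^p,\beta^p)_{p,\bar F}$ and hence equals $D$. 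Writing $a=\alpha^p$, the eigenvalues of $\alpha$ lie in $\{a^{1/p}\rho^i\}_{i\in\Z/p}$; because $\beta$ is invertible and carries the $a^{1/p}\rho^i$-eigenspace into the $a^{1/p}\rho^{i+1}$-eigenspace, the set of indices with nonzero eigenspace is stable under a full $p$-cycle on $\Z/p$ and so equals all of $\Z/p$, with every eigenspace one-dimensional.

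Fix such an eigenbasis $v_0,\ldots,v_{p-1}$ of $\alpha$ and scalars $c_i\in\bar F^\ast$ with $\beta v_i=c_iv_{i+1}$; a direct matrix computation yields $\mathrm{Ad}(\alpha)E_{ij}=\rho^{i-j}E_{ij}$ and $\mathrm{Ad}(\beta)E_{ij}=(c_i/c_j)E_{i+1,j+1}$. The $\rho^{-1}$-eigenspace of $\mathrm{Ad}(\alpha)$ is the $p$-dimensional span of $\{E_{i,i+1}\}_{i\in\Z/p}$, on which $\mathrm{Ad}(\beta)$ acts as a single cyclic shift whose monodromy factor around the full cycle telescopes to $1$; its eigenvalues are therefore the $p$ distinct $p$th roots of unity, each with multiplicity one, so the $\rho$-eigenspace is one-dimensional, completing the Jacobian verification. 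The step I expect to be subtlest is the cyclic-orbit argument forcing $\alpha$ to have a full non-degenerate spectrum, together with the accompanying fact that the monodromy of $\mathrm{Ad}(\beta)$ telescopes to $1$ regardless of the $c_i$; both use the primality of $p$ crucially.
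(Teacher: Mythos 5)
Your proposal is correct in substance but proceeds along a genuinely different route from the paper. The paper proves formal smoothness directly by the infinitesimal lifting criterion: after reducing to a square-zero extension $J^2=0$, it takes arbitrary lifts $\alpha',\beta'$, observes the obstruction $z=\alpha'\beta'-\rho\beta'\alpha'$ lies in the trace-zero part of $J(D\otimes_F T)$, and kills it by replacing $\alpha',\beta'$ with $\alpha'(1+x),\beta'(1+y)$; the crux is that $(x,y)\mapsto(\beta'^{-1}x\beta'-x)+(y-\alpha'^{-1}y\alpha')$ surjects onto trace-zero elements, which the paper extracts from a structure theorem for skew-commuting pairs in Azumaya algebras of prime degree and the identity $\beta^{-1}\alpha^i\beta^j\beta-\alpha^i\beta^j=(\rho^i-1)\alpha^i\beta^j$. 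You instead pass to $\bar F$ and use the Jacobian criterion, identifying the tangent map $L(\epsilon,\delta)=\epsilon\beta-\rho\beta\epsilon+\alpha\delta-\rho\delta\alpha$ and computing $\ker L^*$ via the joint eigenspace of $\mathrm{Ad}(\alpha)$ and $\mathrm{Ad}(\beta)$. The two arguments are morally the same rank computation (the paper's perturbation map is, up to an invertible reparametrization of the tangent space, the paper's version of $L$), but you work with matrix units over $\bar F$ whereas the paper stays in the $\alpha^i\beta^j$-basis and is agnostic about the base ring. What the paper's approach buys is that it establishes formal smoothness without ever needing the a priori dimension count for $P$, and its Azumaya structure theorem has independent value; what yours buys is the avoidance of that structure theorem, at the cost of quoting the earlier result $\dim P=p^2+1$ and working pointwise over $\bar F$.

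One local wrinkle: you invoke the symbol algebra $(\alpha^p,\beta^p)_{p,\bar F}$ before you have established that $\alpha^p$ and $\beta^p$ are scalars, which is exactly what the paper's structure theorem supplies. The fix is to reorder what you already say: the relation $\beta\alpha\beta^{-1}=\rho^{-1}\alpha$ shows the multiset of eigenvalues of $\alpha$ is invariant under $\lambda\mapsto\rho^{-1}\lambda$, and since $\alpha$ is invertible every $\rho$-orbit of eigenvalues has size exactly $p$, so the spectrum is a single orbit with all eigenspaces one-dimensional, whence $\alpha^p$ is scalar. Your ``cyclic orbit'' remark is essentially this argument; stating it \emph{first} removes the circularity, and the remainder of the eigenspace computation (including the telescoping of the monodromy to $1$) goes through as written.
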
 

In particular the above shows $I$ reduced and we need not 
take radicals. 

Note that if we change the basis $u_i$  over $F$ then this just makes 
linear changes in the variables of the $f_i$ and so does not change the 
result. Thus we can and will assume that $u_1$ has trace $1$ and all the  
$u_i$ with $i > 1$ have trace 0. 

To prove this theorem we start with some observations 
about skew commuting pairs in Azumaya algebras over arbitrary 
(non-reduced) $F$ algebras. 

\begin{theorem}
Suppose $R'$ is a commutative $F$ algebra and $A/R'$ is an Azumaya 
algebra of degree $p$. Assume $A$ contains invertible $\alpha,\beta$ 
such that $\alpha\beta = \rho\beta\alpha$. 
Then $A = \sum_{i,j = 0}^{p-1} R'\alpha^i\beta^j$ 
and $\alpha^p, \beta^p \in (R')^*$. That is, $A$ is an 
Azumaya symbol algebra. 
\end{theorem}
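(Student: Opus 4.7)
The plan is to reduce to the classical statement over a field by localizing, then bootstrap with Nakayama's lemma. Both conclusions---that $\{\alpha^i\beta^j\}_{0\le i,j\le p-1}$ is an $R'$-basis of $A$, and that $\alpha^p,\beta^p\in(R')^*$---can be verified stalkwise: a map between finitely generated $R'$-modules is an isomorphism if and only if it is locally so, and membership of $\alpha^p$ in the submodule $R'\subset A$ is similarly local. So I may assume $R'$ is local with maximal ideal $\mathfrak{m}$ and residue field $k$, in which case the Azumaya hypothesis makes $A$ free of rank $p^2$ over $R'$.

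Reducing modulo $\mathfrak{m}$, $\bar A=A\otimes_{R'}k$ is a central simple $k$-algebra of degree $p$ in which the units $\bar\alpha,\bar\beta$ still satisfy $\bar\alpha\bar\beta=\rho\bar\beta\bar\alpha$. I invoke the classical field-case fact: conjugation by $\bar\beta$ sends $\bar\alpha$ to $\rho^{-1}\bar\alpha$, so the $p$ distinct conjugates $\rho^{-i}\bar\alpha$ all share the reduced characteristic polynomial of $\bar\alpha$. Since $\bar\alpha$ is a unit, this polynomial must be $t^p-c$ for some $c\in k^*$, giving $\bar\alpha^p=c\in k^*$; symmetrically $\bar\beta^p\in k^*$. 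Then $k\langle\bar\alpha,\bar\beta\rangle$ is the symbol algebra $(\bar\alpha^p,\bar\beta^p)_{p,k}$, which is central simple of degree $p$, and comparison of dimensions forces $k\langle\bar\alpha,\bar\beta\rangle=\bar A$ with $k$-basis $\{\bar\alpha^i\bar\beta^j\}$.

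Nakayama's lemma now implies that the $p^2$ lifts $\alpha^i\beta^j$ span $A$ over $R'$; the resulting surjection $(R')^{p^2}\twoheadrightarrow A$ between finitely generated free $R'$-modules of equal rank is automatically an isomorphism, so $\{\alpha^i\beta^j\}$ is an $R'$-basis and $\alpha,\beta$ generate $A$ as an $R'$-algebra. For the $p$-th powers, $\alpha^p$ commutes with $\alpha$ trivially and with $\beta$ because $\beta\alpha^p=\rho^{-p}\alpha^p\beta=\alpha^p\beta$; since $\alpha$ and $\beta$ generate $A$, this shows $\alpha^p\in Z(A)$, and the Azumaya hypothesis gives $Z(A)=R'$. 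As $\alpha$ is a unit, so is $\alpha^p$; the argument for $\beta^p$ is identical.

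The main obstacle is really just the field-case input---the reduced-characteristic-polynomial calculation showing that a skew-commuting pair of units in a degree-$p$ central simple algebra over a field containing $\rho$ generates a symbol subalgebra. Everything else is formal once that is in place: the local-to-global reduction is routine module theory, and the centrality step collapses as soon as one knows that $\alpha$ and $\beta$ generate the algebra.
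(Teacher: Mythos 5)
Your argument is correct but follows a genuinely different route from the paper's. The paper never localizes: it takes the reduced Cayley--Hamilton equation $\alpha^p + s_1\alpha^{p-1} + \cdots + s_p = 0$ directly over $R'$, conjugates it by $\beta^j$ for $j = 0,\ldots,p-1$, subtracts off the $j=0$ equation, and invokes a separate lemma (invertibility of the $(p-1)\times(p-1)$ matrix with entries $\rho^{ij}-1$) to force $s_1 = \cdots = s_{p-1} = 0$; then $\alpha^p = -s_p \in (R')^*$ at once, and $A = \sum R'\alpha^i\beta^j$ because the right-hand side is already an Azumaya subalgebra of the same degree. You instead localize, reduce modulo $\mathfrak{m}$ to the classical field case, lift the $R'$-basis via Nakayama, and get centrality of $\alpha^p$ from the fact that it commutes with the algebra generators. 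Both are sound. What the paper's version buys is uniformity: a single computation valid over an arbitrary commutative $F$-algebra, with no appeal to the field case or to Azumaya-over-local facts. What yours buys is modularity, reusing standard black boxes. Worth noticing, though: the substance of the ``classical field-case fact'' you invoke---that the reduced characteristic polynomial of $\bar\alpha$ must collapse to $t^p - c$---is exactly the conjugation computation the paper performs directly over $R'$, so the reduction to a residue field repackages, rather than replaces, the central calculation.
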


To start with we observe: 

\begin{lemma}
In $M_{p-1}(F)$ the matrix $R_1$ with $i,j$ entry 
$\rho^{ij} - 1$ is invertible. 
\end{lemma}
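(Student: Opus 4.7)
The plan is to exhibit $R_1$ as a block of a manifestly invertible $p \times p$ matrix, from which the result will follow by one determinant-preserving row operation. The natural candidate is the ``Fourier'' matrix $M = (\rho^{ij})_{i,j=0}^{p-1}$ discussed in Proposition \ref{toral}(e); recall that the identity $MM' = pI$ proved there, combined with the running assumption that the characteristic of $F$ is prime to $p$, forces $\det M \neq 0$. Alternatively, $M$ is the Vandermonde matrix on the distinct $p$th roots of unity $1,\rho,\ldots,\rho^{p-1}$, and one sees directly that its determinant is a nonzero product of differences $\rho^j - \rho^i$.

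Next I would subtract the $0$th row of $M$ (which is the all-ones row) from each of the remaining rows. This row operation does not change $\det M$. After the operation, the $0$th row is still all $1$'s; the $0$th column below the top entry is identically zero, because $\rho^{i\cdot 0} - 1 = 0$ for every $i \geq 1$; and the lower-right $(p-1)\times(p-1)$ block is precisely $R_1$, since its $(i,j)$ entry for $i,j \geq 1$ is $\rho^{ij} - 1$. Cofactor expansion along the $0$th column then gives $\det R_1 = \det M \neq 0$, so $R_1$ is invertible in $M_{p-1}(F)$.

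There is really no obstacle to overcome here: the only hypothesis in play is the one carried throughout the appendix, namely that the characteristic of $F$ is prime to $p$ and that $\rho$ is a primitive $p$th root of unity, so that the powers $1,\rho,\ldots,\rho^{p-1}$ are genuinely distinct. Under that hypothesis the argument is entirely mechanical, and as a byproduct it yields the explicit identity $\det R_1 = \det M$, which could be written down from the Vandermonde formula if a closed form were desired.
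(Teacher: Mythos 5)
Your proof is correct and takes essentially the same approach as the paper: start with the invertible $p\times p$ matrix $(\rho^{ij})$, subtract the all-ones $0$th row from rows $1,\dots,p-1$, and expand the determinant along the $0$th column to read off $\det R_1 \neq 0$. The only difference is that you spell out why $(\rho^{ij})$ is invertible (via $RR' = pI$ or the Vandermonde formula), which the paper leaves implicit.
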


In the above lemma, note that we are labelling the rows and 
columns of $R_1$ by $1,\ldots,p-1$. 
The proof will make clear why. 

\begin{proof} 
We start with the $p \times p$ matrix $R = (\rho^{ij})$ 
which we know is invertible. From each row $1,\ldots,p-1$ 
we subtract the first row to get a matrix $R''$. 
The first (i.e $0$) column is 
$1,0,\ldots,0$ and when we expand the determinant 
of $R''$ using that first column we get that $R_1$ has nonzero determinant. 
\end{proof}

\begin{proof}
Returning to the proof of the second theorem, let 
$\alpha$, $\beta$ be as given. Since $A$ is Azumaya 
of degree $p$, we have the characteristic polynomial 
equation $\alpha^p + s_1\alpha^{p-1} + \ldots s_{p-1}\alpha + s_p = 0$. 
Conjugating this equation by $\beta^j$ we have 
 $\alpha^p + s_1\rho^{j(p-1)}\alpha^{p-1} + \ldots + 
s_{p-1}\rho^j\alpha + s_p = 0$.  We can subtract the $j = 0$ 
equation from the $j = 1,\ldots,p-1$ equations eliminating 
the $\alpha^p$ and $s_p$ terms. Applying the above 
lemma we get $s_j\alpha^{p-j} = 0$ which implies $s_j = 0$ 
for $j = 1,\ldots,p-1$. That is the canonical equation 
for $\alpha$ is $\alpha^p + s_p = 0$ and we know $s_p$ 
is invertible. We have the parallel result for $\beta$. 

Let $A' = \sum_{i,j = 0}^{p-1} R'\alpha^i\beta^j \subseteq A$.  
Since $A'$ is an Azumaya symbol algebra, it is Azumaya 
of degree $p$. By a standard argument (e.g. \cite{S} p. 15) 
$A' = A$. 
\end{proof}

\begin{proof}
We now can turn to the proof of the main, first theorem. 
If $T$ is a commutative $F$ algebra and $J \subset T$ 
is a nilpotent ideal, we must show that any $F$ algebra 
homomorphism $\phi': R \to T/J$ lifts to $\phi: R \to T$. 
By the usual induction argument, we can assume 
$J^2 = 0$. it is now clear that we must prove: 

\begin{proposition}
Suppose $\alpha,\beta \in (D \otimes_F (T/J))^*$ 
satisfy $\alpha\beta = \rho\beta\alpha$, then there are 
$\alpha' \beta' \in D \otimes_F T$ which are preimages 
of $\alpha$, $\beta$ and which satisfy 
$\alpha'\beta' = \rho\beta'\alpha'$. 
\end{proposition}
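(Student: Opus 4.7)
The plan is to leverage the structure theorem just proved for Azumaya algebras containing skew-commuting units: apply it to $\bar A := D \otimes_F (T/J)$, lift the resulting symbol presentation from $T/J$ to $T$, and then transport the lifted presentation into $D \otimes_F T$ by deformation theory of Azumaya algebras. The naive plan of merely perturbing an arbitrary pair of lifts $\tilde\alpha, \tilde\beta$ will \emph{not} work: writing $\epsilon := \tilde\alpha\tilde\beta - \rho\tilde\beta\tilde\alpha \in D \otimes_F J$, one checks that the map $(a,b) \mapsto \alpha b - \rho b \alpha + a \beta - \rho \beta a$ on $\bar A \oplus \bar A$ misses the $\alpha\beta$-component, so additive perturbations alone cannot annihilate a generic $\epsilon$. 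The structure-theorem approach avoids this obstruction entirely by choosing the lifts wholesale.

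First I would apply the structure theorem to the skew-commuting units $\alpha, \beta \in \bar A^*$: this yields $\alpha^p = x$ and $\beta^p = y$ for some units $x, y \in (T/J)^*$, together with an isomorphism $\iota : (x,y)_{p,T/J} \xrightarrow{\sim} \bar A$ sending the standard generators to $\alpha$ and $\beta$. Since $J$ is nilpotent, units mod $J$ lift to units in $T$, so I pick lifts $x', y' \in T^*$ and form the symbol algebra $C := (x', y')_{p,T}$ with standard generators $\tilde\gamma, \tilde\delta$ satisfying $\tilde\gamma^p = x'$, $\tilde\delta^p = y'$, $\tilde\gamma \tilde\delta = \rho \tilde\delta \tilde\gamma$. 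By construction, $C \otimes_T (T/J) \cong (x,y)_{p,T/J}$ canonically. The crux is then: if $\iota$ can be lifted to a $T$-algebra isomorphism $\tilde\iota : C \xrightarrow{\sim} D \otimes_F T$, then $\alpha' := \tilde\iota(\tilde\gamma)$ and $\beta' := \tilde\iota(\tilde\delta)$ are the desired preimages, inheriting $\alpha'\beta' = \rho\beta'\alpha'$ from $\tilde\gamma, \tilde\delta$ and reducing to $\alpha, \beta$ by the defining property of $\iota$.

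The heart of the argument is therefore lifting the specific isomorphism $\iota$. Both $C$ and $D \otimes_F T$ are Azumaya $T$-algebras of degree $p$, so the functor $\operatorname{Isom}(C, D \otimes_F T)$ is a torsor under $\Aut(D \otimes_F T)$, a (twisted) form of $PGL_p$, and hence smooth over $T$. The given $\iota$ is a $(T/J)$-section of this torsor, and smoothness guarantees it extends to a $T$-section by infinitesimal lifting across the nilpotent ideal $J$. Equivalently and more concretely, I would lift $\iota$ by lifting idempotents: the degree-$p^2$ Azumaya algebra $E := C \otimes_T (D \otimes_F T)^{\circ}$ reduces mod $J$ to $\End_{T/J}(\bar A) \cong M_{p^2}(T/J)$ via $\iota$; a rank-one idempotent of this matrix algebra lifts, since $J$ is nilpotent, to an idempotent $e \in E$, and the resulting projective rank-$p^2$ module $eE$ gives a splitting $E \cong \End_T(eE)$ that produces the required iso $\tilde\iota$ compatible with $\iota$ mod $J$.

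The hard part will be justifying the lift of $\iota$ rigorously: in its cleanest form it is an appeal to the smoothness of $PGL_p$ and standard Azumaya deformation theory, but it can also be carried out entirely by the idempotent-lifting argument sketched above. Once this lifting step is in hand, no further input is needed beyond the structure theorem and the observation that units lift modulo nilpotent ideals.
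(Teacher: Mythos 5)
Your argument is correct, but it takes a genuinely different route from the paper's, which is both shorter and more elementary. The paper \emph{does} ``merely perturb an arbitrary pair of lifts'' $\alpha',\beta'$ --- just multiplicatively rather than additively: replacing $(\alpha',\beta')$ by $(\alpha'(1+x),\beta'(1+y))$ with $x,y\in J(D\otimes_F T)$, a two-line computation (using $J^2=0$) turns the obstruction into $\alpha'\beta'\bigl((\beta'^{-1}x\beta'-x)+(y-\alpha'^{-1}y\alpha')\bigr)+z$, and the problem reduces to showing the $\tilde F$-linear map $(x,y)\mapsto(\beta^{-1}x\beta-x)+(y-\alpha^{-1}y\alpha)$ surjects onto the trace-zero elements. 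That surjectivity is immediate once one writes things in the $\alpha^i\beta^j$ basis: conjugation by $\beta$ multiplies $\alpha^i\beta^j$ by $\rho^i$, so the first summand hits every $\alpha^i\beta^j$ with $i\neq 0$, and symmetrically the second hits $j\neq 0$. Your opening dismissal of perturbation is therefore misplaced. It is true that the \emph{additive} correction map $(a,b)\mapsto(\alpha b-\rho b\alpha)+(a\beta-\rho\beta a)$ misses the $\alpha\beta$-component, but the actual obstruction $z=\alpha'\beta'-\rho\beta'\alpha'$ is not generic: the commutator $\beta'^{-1}\alpha'^{-1}\beta'\alpha'$ has reduced norm $1$, which (after a short computation with $J^2=0$) forces the $\alpha\beta$-coefficient of $z$ to vanish as well, so even the additive version can be made to work. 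More importantly, the paper's multiplicative normalization sidesteps this bookkeeping entirely.

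What your argument buys is conceptual clarity: it makes explicit that the structure theorem rigidifies $D\otimes_F(T/J)$ as a symbol algebra, that symbol presentations lift trivially across nilpotents (units lift), and that the remaining step is exactly the formal smoothness of $\operatorname{Isom}(C,D\otimes_F T)$ as a $\PGL_p$-torsor --- a standard fact in Azumaya deformation theory, also provable by lifting idempotents in $C\otimes_T(D\otimes_F T)^\circ$. That is a clean, correct proof. What it costs is the importation of machinery (torsors under smooth group schemes, or idempotent lifting plus the Morita bookkeeping needed to extract the isomorphism) that the paper avoids in favor of a self-contained linear-algebra calculation in the $\alpha^i\beta^j$ basis. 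Both routes establish the proposition; the paper's is the one you would want in an elementary account, yours is the one that generalizes most transparently.
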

 
\begin{proof}
Let $\alpha', \beta' \in D \otimes_F T$ be arbitrary 
lifts of $\alpha$, $\beta$. As such, they are automatically 
invertible and $\alpha'\beta' - \rho\beta'\alpha' = z \in J(D \otimes_F T)$. 
Note that the left side of the above equation has trace 0 
and so $z$ has trace zero. 

Let $x ,y \in J(D \otimes_F T)$ 
be arbitrary and consider 
$$\alpha'(1 + x)\beta'(1 + y) - 
\rho\beta'(1 + y)\alpha'(1 + x) = $$
$$\alpha'\beta'(1 + \beta'^{-1}x\beta')(1 + y) - 
\rho\beta'\alpha'(1 + \alpha'^{-1}y\alpha)(1 + x) = $$
$$\alpha'\beta'(1 + \beta'^{-1}x\beta')(1 + y) - [(\alpha'\beta') - z]
(1 + \alpha'^{-1}y\alpha')(1 + x) = $$ 
$$\alpha'\beta'(1 + \beta'^{-1}x\beta') + y) - 
[\alpha'\beta' - z]( 1 + \alpha'^{-1}y\alpha + x)$$ 
since $J^2 = 0$. 
Simplifying further the above equals:
$$\alpha'\beta'((\beta'^{-1}x\beta - x)+  (y - \alpha'^{-1}y\alpha' ) - z).$$
Thus the proposition is proven if we show: 

\begin{lemma}
Every trace $0$ element of $J(D \otimes_F T)$ has the form 
$(\beta'^{-1}x\beta' - x)+  (y - \alpha'^{-1}y\alpha' )$ 
for some choice of $x,y \in J(D \otimes_F T)$. 
\end{lemma}

\begin{proof}
First of all, since $J^2 = 0$, we note that 
$\alpha'^{-1}y\alpha'$ and $\beta'^{-1}x\beta'$ are independent 
of the choice of $\alpha'$ and $\beta'$. 
The trace $0$ elements of $J(D \otimes_F T)$ 
are $Ju_2 + \ldots + Ju_{p^2}$. 
Clearly is suffices to show 
that for any $r \in J$ and any $u_i$ with $i > 1$ then $ru_i$ can be 
written in that form. That is, it suffices to show that 
any trace 0 element of $D \otimes_F T/J$ can be written as 
$\beta^{-1}x'\beta - x' + y' - \alpha^{-1}y'\alpha$ 
for some $x',y' \in D \otimes_F T/J$. 

We are reduced to showing that if $T'$ is a commutative 
$F$ algebra, $A'/T'$ is a degree $p$ Azumaya symbol algebra 
generated by skew commuting nonsingular $\alpha,\beta$, 
and $\phi: A' \oplus A' \to A'$ is defined by 
$\phi'(x',y') = \beta^{-1}x\beta - x + y - \alpha^{-1}y\alpha$ 
then $\phi$ is surjective onto the trace 0 elements. 

Now in $A'$ the trace 0 elements are spanned by $\alpha^i\beta^j$ 
for $(i,j) \not= (0,0)$. 
$\beta^{-1}\alpha^i\beta^j\beta - \alpha^i\beta^j = 
(\rho^i - 1)\alpha^i\beta^j$ so in the image of $\phi$ 
are all $\alpha^i\beta^j$ for $i > 0$. Similarly, 
in the image of $\phi$ are all $\alpha^i\beta^j$ for $j > 0$. 
This proves the claim, the proposition, and the main theorem. 

\end{proof}

\end{proof}

\end{proof}

Let us note that we can extend the above 
argument to directly show $\hat P$ is smooth. That is, if we add to $I$ above 
the equations expressing $X^p = 1$, 
$Y^p = 1$, and do not take the radical, 
then $\hat P$ defined in this way is 
smooth and hence already reduced. 

We follow the same outline, and show 
we can lift skew commuting pairs 
$\alpha$, $\beta$ where $\alpha^p = 1 = \beta^p$. We lift as above, and now 
$\alpha'^p = 1 + x$ and $\beta'^p = 
1 + y$ where $x,y \in J$. But as the characteristic of $F$ is prime to $p$, 
$1 + x$ and $1 + y$ have $p$ roots.


\begin{thebibliography}{99}

\bibitem[Al]{Al} Albert, A.A., "Structure of Algebras", 
American Math Soc. Providence, RI,1961

\bibitem[A]{A} Amitsur, S.A., "On Central Division Algebras", 
Isr. J. Math {\bf 12} (1972) 408-420 

\bibitem[BL]{BL} Bessenrodt, C. and Le Bruyn, L, "Stable Rationality of Certain $PGL_n$ Quotients", Invent. Math. {\bf 104} (1991) 179-199 

\bibitem[C]{C} "On Minimal Representations of Simple Algebraic Groups bover Algebras", Commun. in Algebra 26(2) (1998) 671-679

\bibitem[EM]{EM} Endo, S., and Miyata, T., 
"Invariants of finite abelian groups", 
J. Math. Soc. Japan {\bf 25} (1973), 7-26

\bibitem[F]{F} Formanek, E., "The center of the ring 
of 3 {$\times$} 3 generic matrices., 
Linear and Multilinear Algebra {\bf 7} (1979) n. 3, 
203-212

\bibitem[P]{P} Procesi, C., "Noncommutative affine rings", 
Atti Acc. Naz. Lincei, S VII, f. {\bf 6} (1967), 239-255

\bibitem[S]{S} Saltman, D.J. "Lectures on Division Algebras", 
CBMS conf series no. 94, American Math Society, 1999



\end{thebibliography}
\end{document}